\DeclareMathOperator*{\spt}{supp}
\newcommand{\R}{\mathbb{R}}
\newcommand{\C}{\mathbb{C}}
\newcommand{\N}{\mathbb{N}}
\newcommand{\A}{\mathcal{A}}
\newcommand{\xnice}{\mathbf{x}}
\newcommand{\Ho}{\mathcal{H}}
\newcommand{\CT}{\mathcal{C}}
\newcommand{\LT}{\mathcal{L}}
\newcommand{\fhat}{\hat{f}}
\newcommand{\ftilde}{\tilde{f}}
\newcommand{\ghat}{\hat{g}}
\newcommand{\Mtilde}{\tilde{M}}
\newcommand{\Mlog}{M_\mathrm{log}}
\newcommand{\MK}{M_{K}}
\newcommand{\MKtilde}{M_{\tilde{K}}}
\newcommand{\MKtildem}{M_{\tilde{K}_m}}
\newcommand{\Gtilde}{\tilde{G}}
\newcommand{\Ktilde}{\tilde{K}}
\newcommand{\inv}{^{-1}}
\newcommand{\obstacle}{\mathcal{O}}
\newcommand{\Ltwocomp}{L_\mathrm{c}^2}
\newcommand{\Ltwoloc}{L_\mathrm{loc}^2}
\newcommand{\Hcomp}[1]{H_\mathrm{c}^{#1}}
\newcommand{\Eloc}{E^\mathrm{loc}}
\newcommand{\Lin}{\mathcal{L}}
\newcommand{\ep}{\varepsilon}
\newcommand{\Ctilde}{\tilde{C}}
\newcommand{\Chat}{\hat{C}}
\newcommand{\buc}{\mathrm{BUC}(\R_+)}
\newcommand{\cab}{c_{\alpha,\beta}}
\newcommand{\abs}[1]{|#1|}
\newcommand{\absbig}[1]{\left|#1\right|}
\newcommand{\normbig}[1]{\left\| #1 \right\|}
\newcommand{\norm}[1]{\| #1 \|}
\newcommand{\cl}[1]{\overline{#1}}
\newcommand{\MKK}[1]{M_{#1}}
\newtheorem{thm}{Theorem}[section]
\newtheorem{prp}[thm]{Proposition}
\newtheorem{lem}[thm]{Lemma}
\newtheorem{cor}[thm]{Corollary}
\theoremstyle{definition}
\newtheorem{rem}[thm]{Remark}
\newtheorem{ex}[thm]{Example}
\numberwithin{equation}{section}
\begin{document}

\title[Local decay of $C_0$-semigroups]{Local decay of $C_0$-semigroups with a possible singularity of logarithmic type at zero}
\author[Reinhard Stahn]{Reinhard Stahn}

\begin{abstract} 
 We prove decay rates for a vector-valued function $f$ of a non-negative real variable with bounded weak derivative, under rather general conditions on the Laplace transform $\fhat$. This generalizes results of Batty-Duyckaerts (2008) and other authors in later publications. Besides the possibility of $\fhat$ having a singularity of logarithmic type at zero, one novelty in our paper is that we assume $\fhat$ to extend to a domain to the left of the imaginary axis, depending on a non-decreasing function $M$ and satisfying a growth assumption with respect to a different non-decreasing function $K$. The decay rate is expressed in terms of $M$ and $K$. We prove that the obtained decay rates are essentially optimal for a very large class of functions $M$ and $K$. Finally we explain in detail how our main result improves known decay rates for the local energy of waves on exterior domains.
\end{abstract}

\subjclass[2010]{40E05 (47D06, 35B40)}
\keywords{$C_0$-semigroup, local decay, rate of decay, resolvent, logarithmic singularity, wave equation}


\maketitle


\section{Introduction}\label{sec:intro}

In the last decade there has been much activity in the field of \emph{quantified} Tauberian theorems for $C_0$-semigroups, or more generally for functions of a non-negative real variable  \cite{Ch98,LiRa05,BaEnPrSch06,ChTo07,BaDu08,BoTo10,Ma11,BaChTo16,ChSe16,BaBoTo16,RoVe17}. See also \cite{Se15,Se16} and references therein for quantified Tauberian theorems on sequences and \cite{Ha17} for Dirichlet series. We refer to \cite{Ko2004} and \cite[Chapter 4]{ArBaHiNe2011} for a general overview on Tauberian theory.
 
Let $X$ be a Banach space and $f:\R_+\rightarrow X$ be a locally integrable function. For some continuous and non-decreasing function $M:\R_+\rightarrow (0,\infty)$ let us define
\begin{equation}\nonumber
 \Omega_M = \left\{ z\in \C; - \frac{1}{M(\absbig{\Im z})} < \Re z \leq 0 \right\} .
\end{equation}
The above mentioned articles impose essentially the Tauberian condition that the function $f$ has a bounded derivative $f'$ (in the weak sense), the Laplace transform $\fhat$ extends across the imaginary axis to $\Omega_M$ and it satisfies a growth condition, \emph{also expressed in terms of $M$}  in $\Omega_M$ at infinity. The decay rate (the rate of convergence to zero) is then determined in terms of $M$. For example, a polynomially growing $M$ yields (essentially) a polynomial decay rate and an exponentially growing $M$ yields a logarithmic decay rate. In general $\fhat$ could also have a finite number of singularities on the imaginary axis (see Martinez \cite{Ma11}), but we are not interested in this situation in the present article.
 
The pioneering works of Liu and Rao \cite{LiRa05} on the one side and Batkai, Engel, Pr\"uss, and Schnaubelt \cite{BaEnPrSch06} on the other side, focus on polynomial decay for orbits of $C_0$-semigroups. A generalization for functions (as formulated above) and to arbitrary decay rates was given by Batty and Duyckaerts \cite{BaDu08} for the first time. There the authors also improved the decay rates from \cite{LiRa05,BaEnPrSch06}. In the influential paper \cite{BoTo10} Borichev and Tomilov showed that the results of \cite{BaDu08} are optimal in the case of polynomial decay. We want to emphasize at this point that the main result of \cite{BaDu08} for the special case of a truncated orbit of a unitary group $U$ of operators (i.e. $f(t)=P_2 U(t) P_1$ for some bounded operators $P_1, P_2$) were already obtained in the earlier article \cite{PoVo99} by Popov and Vodev \emph{with the same rate of decay}. Actually the authors only formulated a theorem on polynomial decay but in the retrospective it is not difficult to generalize their proof to arbitrary decay rates. A major contribution to the field of Tauberian theorems is the recent article \cite{BaBoTo16} of Batty, Borichev, and Tomilov. The authors extended the known Tauberian theorems to $L^p$-rates of decay. On the basis of a technique already applied in \cite{BoTo10} the authors showed the optimality of their results in the case of polynomial decay. 

Another important observation, made in \cite{BaBoTo16}, concerns the above mentioned growth condition. In \cite{BaDu08} it was assumed that the norm of $\fhat(z)$ is bounded by $M(\absbig{\Im z})$ in $\Omega_M$. This condition was weakened by Borichev and Tomilov \cite{BoTo10} in case of polynomial decay, and later in \cite{BaBoTo16} assuming merely that $\fhat(z)$ can be bounded by a polynomial in $(1+\absbig{\Im z})$ and $M(\absbig{\Im z})$, i.e. there exist $C,\alpha,\beta\geq0$ such that
\begin{align}\label{eq:fhat_poly_K}
 \norm{z\fhat(z)} \leq C (1+\absbig{\Im z})^\alpha M(\absbig{\Im z})^\beta,\, z\in\Omega_M.
\end{align}
The factor $z$ is natural if one has an application to (local) decay of $C_0$-semigroups in mind. Moreover, it makes it easy to compare our results with those of others since often the rate of convergence of $\ghat(0)-\int_0^t g(s)ds$ is investigated for a bounded function $g$. Our results can be translated to this setting via $f'=g$ and vice versa. If $f'$ is bounded and if (\ref{eq:fhat_poly_K}) is satisfied, it is known that $\normbig{f(t)}=O(\Mlog\inv(ct))\inv,t\to\infty$ for a sufficiently small $c>0$ and $\Mlog(s)=M(s)\log(e\vee sM(s))$ (see e.g. \cite{BaBoTo16}). An inspection of the proofs in \cite{BaBoTo16} or \cite{BaDu08} reveals that this actually holds for $c\in(0,1/2)$ if $\alpha=0$ and $\beta=1$, or more generally for $c=(0,1)$ if $\Mlog$ was replaced by $s\mapsto M(s)\log(e\vee s^{2+\alpha}M(s)^\beta)$. See also a paper of Chill and Seifert \cite{ChSe16}, where admissible values for $c$ where explicitly discussed. Note that the value of $c$ has a very significant influence on the decay rate if $\Mlog$ grows at a sub-polynomial rate.

The aim of this paper is to generalize the above results in several directions. We illustrate the power and the need of our improvements in Section \ref{sec:waves_ext_dom}, where we apply our results to obtain \emph{semi-uniform} decay rates for the local energy of the wave equation on exterior domains. Such a problem can be reformulated as the question how fast a certain function $f$ decays. The wave equation on $d$-dimensional exterior domains reveals at least two weaknesses of the above abstract results. These weaknesses can prevent the above explained results to be (directly) applicable in this situation. For example if $d$ is even, it is well-known that $\fhat$ has a singularity of \emph{logarithmic type} at zero. This means that there exists a non-zero ($X$-valued) analytic function $\ftilde$ such that $z\mapsto\fhat(z)-\ftilde(z)\log(z)$ is analytic (more precisely, extends to an analytic function) in a neighbourhood of zero. Here by $\log:\C\backslash(-\infty,0]\to\C$ we denote a branch of the complex logarithm. For the particular case of the wave equation it is known that such a singularity has the effect, that the semi-uniform decay of the local energy can not be faster than $t^{-d}$ (see e.g. Vodev \cite{Vo99}). To the best of our knowledge we are the first to investigate such type of singularities in an abstract functional analytic setting. 

Ignoring the logarithmic singularity, which does not occur for $d$ being odd, the second weakness is that the condition (\ref{eq:fhat_poly_K}) seems to be too restrictive in this setting (in general). In fact, it can happen that $\fhat$ extends to a whole strip to the left of the imaginary axis (i.e. $M$ is constant) but the smallest (known) function $K$ for which
\begin{align}\label{eq:fhat_big_K}
 \norm{z\fhat(z)} \leq C K(\absbig{\Im z}),\, z\in \Omega_M
\end{align}
is satisfied, is of the form $K(s)=C\exp(Cs^\alpha),s\geq0$ for some $\alpha>0$ (we refer to Bony and Petkov \cite{BoPe06}). We solve this issue by significantly refining the proof presented in \cite{BaDu08} on a technical level. This proof is based on contour integrals and a fudge-factor argument due to Newman \cite{Ne80}. Our contribution to improve the method of Batty and Duyckaerts is to choose the contour and the fudge-factor in a very particular way which allows to consider functions $K$ in (\ref{eq:fhat_poly_K}) growing much faster than a polynomial in $(1+s)M(s)$. The decay rate we obtain is given by $\MKtilde\inv(ct)\inv$, for any $c\in(0,1)$ where $\MKtilde(s)=M(s)\log(e\vee sK(s)), s\geq0$. In some cases even $c=1$ is allowed. Our results can be \emph{directly} applied to the setting presented in \cite{BoPe06} and lead to improvements in their decay rates. We want to mention at this point, that the possibility of allowing such a more general growth bound in terms of $K$ was briefly discussed in \cite{BaBoTo16}. However, the authors gave no hint how the decay rate should look like or how the proof has to be modified.

For a large class of functions $M,K$ we show that our results are optimal. That is, given $c>1$ we construct functions $f:\R_+\to\C$, having bounded derivative, satisfying (\ref{eq:fhat_big_K}) for which $\liminf_{t\to\infty}\MKtilde\inv(ct)\absbig{f(t)}$ is bounded from below by a strictly positive constant. Our construction is based on a very similar construction, due to Borichev and Tomilov \cite{BoTo10}, showing that the ``logarithmic loss'' in the results of \cite{BaDu08} (the logarithmic term in the definition of $\Mlog$) can not be avoided if $M$ grows like $s^\alpha$ and $K$ like $s^\beta$ for some $\alpha>0, \beta>\alpha/2$. As in \cite{BoTo10} we show an analogous optimality result for the decay of $C_0$-semigroups.

Finally we want to point out an interesting side product of our research. In Section \ref{sec:Loc_dec_C0-SGs} we prove
\begin{thm}
 Let $M:\R_+\to(0,\infty)$ be a continuous non-decreasing function and $A$ the generator of a bounded $C_0$-semigroup $T$, which satisfies $\normbig{(is-A)\inv}\leq M(\absbig{s}), s\in\R$. For any $c\in(0,1)$
 \begin{equation}\nonumber
  \normbig{T(t)A\inv} = O\left( \frac{1}{\Mlog\inv(ct)} \right), t\rightarrow\infty .
 \end{equation}
 Here $\Mlog(s)=M(s)\log(e \vee s M(s)), s\geq0$.
\end{thm}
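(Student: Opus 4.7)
The strategy is to deduce the theorem from the paper's main decay result for functions $f:\R_+\to X$ by taking $f(t) = T(t)A\inv y$ for $y\in X$ with $\|y\|\leq 1$ and then passing to the supremum over such $y$. Since $f'(t) = T(t)y$ is uniformly bounded by $\sup_{t\geq 0}\|T(t)\|$, the Tauberian condition on $f$ is satisfied. The Laplace transform equals $\fhat(z) = A\inv(z-A)\inv y$ for $\Re z > 0$, and using $z(z-A)\inv = A(z-A)\inv + I$ it rearranges to
\[
z\fhat(z) = (z-A)\inv y + A\inv y.
\]

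Next I would extend $\fhat$ across the imaginary axis using the resolvent bound $\|(is-A)\inv\|\le M(|s|)$. The Neumann series
\[
(z-A)\inv = \sum_{n\ge 0}(-\Re z)^n(i\Im z - A)^{-n-1}
\]
converges on $\Omega_M$; on the slightly smaller domain $\Omega_{(1+\ep)M}$ for arbitrary $\ep>0$ it yields the uniform bound $\|(z-A)\inv\|\le \frac{1+\ep}{\ep}M(|\Im z|)$, so that
\[
\|z\fhat(z)\|\le C_\ep M(|\Im z|)\|y\|,\qquad z\in\Omega_{(1+\ep)M},
\]
with $C_\ep=\frac{1+\ep}{\ep}+\|A\inv\|$. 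This verifies the growth hypothesis \eqref{eq:fhat_big_K} with $M$ replaced by $(1+\ep)M$ and $K=C_\ep M$.

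I would then apply the main theorem of the paper to $f$; it yields, for every $c\in(0,1)$,
\[
\|T(t)A\inv y\| = O\bigl(\MKtilde\inv(ct)\inv\bigr),\qquad t\to\infty,
\]
uniformly in $\|y\|\le 1$, where $\MKtilde(s) = (1+\ep)M(s)\log(e\vee C_\ep s M(s))$. Since $\log(e\vee C_\ep sM(s))/\log(e\vee sM(s))\to 1$ as $s\to\infty$, we have the asymptotic equivalence $\MKtilde(s)\sim(1+\ep)\Mlog(s)$, hence $\MKtilde\inv(ct)\sim \Mlog\inv\!\bigl(ct/(1+\ep)\bigr)$ for $t\to\infty$. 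Given any target constant $c'\in(0,1)$, choose $\ep>0$ with $c'(1+\ep)<1$ and apply the above with $c=c'(1+\ep)\in(0,1)$; taking the supremum over $\|y\|\leq 1$ gives the operator-norm bound with the claimed rate.

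The main obstacle is securing the full range $c\in(0,1)$: a naive application with $\ep=1$ (i.e.\ on $\Omega_{2M}$, where the Neumann bound is $2M(|\Im z|)$) produces a factor of $2$ in front of $M$, limiting the conclusion to $c'\in(0,1/2)$. The plan above circumvents this by sending $\ep\to 0$ and exploiting that the divergent constant $C_\ep$ appears only inside the logarithm of $\MKtilde$, where it is asymptotically negligible. Checking this log-scale equivalence carefully (which is straightforward provided $sM(s)\to\infty$, a case to which one easily reduces) is the only delicate point.
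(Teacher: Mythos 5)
Your proposal is correct and takes essentially the same route as the paper's own proof (through Corollary \ref{cor:main_C0}, which rests on Corollary \ref{cor:main_C0_strip}): extend the resolvent across $i\R$ to $\Omega_{(1+\ep)M}$ via the Neumann/resolvent-identity series to get the bound $\tfrac{1+\ep}{\ep}M$ — in the paper's notation this is $\theta\inv M$ and $(1-\theta)\inv M$ with $\theta=1/(1+\ep)\uparrow 1$ — and observe that the blow-up constant lands inside the outer logarithm of $\MKtilde$, where it is asymptotically negligible. Two small slips do not affect the conclusion: Theorem \ref{thm:main} yields $O\bigl(1/\MKtilde\inv(t)\bigr)$ with no internal free parameter $c$ (the $c$ must come entirely from the comparison $\MKtilde\le(1+\ep)(1+\delta)\Mlog$ on a tail, giving $\MKtilde\inv(t)\ge\Mlog\inv(t/((1+\ep)(1+\delta)))$), and for $m=1$ without positive increase of $K$ the correct choice is $\Ktilde=K_{1,\log}(s)=s\log(e\vee s)K(s)$ rather than $sK(s)$, but the extra $\log\log$ factor is absorbed identically; moreover $sM(s)\to\infty$ holds automatically since $M\ge M(0)>0$, so no case reduction is needed.
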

Of course this is essentially a well known result due to Batty and Duyckaerts \cite{BaDu08}, but note that here the range of admissible values for $c$ is twice as large as in all proofs known to us (see discussion above). Moreover, it can be shown that - in this generality - the theorem would be false if $c>1$ were allowed. We refer the reader to the end of Section \ref{sec:Loc_dec_C0-SGs} for this fact. It is an open problem whether the theorem remains true for $c=1$.

The paper is organized as follows. In Section \ref{sec:main} we prove the main result (Theorem \ref{thm:main}) of the paper and establish decay rates for functions with bounded derivative under assumptions on the Laplace transform. Although it is not the main objective of this paper we also prove a result on $L^p$-rates of decay, generalizing \cite[Theorem 4.1]{BaBoTo16} (see Theorem \ref{thm:main_Lp}). In Section \ref{sec:Loc_dec_C0-SGs} we deduce (local) decay results for $C_0$-semigroups. Section \ref{sec:optimality} is devoted to investigations concerning optimality of Theorem \ref{thm:main}. Finally in Section \ref{sec:waves_ext_dom} we apply our results to a wave equation on an exterior domain. For those being not familiar with this setting we describe in detail the most important peculiarities of waves on exterior domains - from the functional analytic point of view. 

\textbf{Notation:} We write $\R_+=[0,\infty)$, $\R_-=-\R_+$ and $\C_+=\{z\in\C; \Re z>0\}$. Given two real numbers $a$ and $b$ we denote $a\vee b = \max\{a,b\}$ and $a\wedge b = \min\{a,b\}$. We use the widely-used convention that whenever a constant called $C$ appears multiple times in a chain of (in)equalities it does not necessarily have the same value each time. Throughout this paper, given two functions $M,K:\R_+\rightarrow \R$ we define the function $\MK:\R_+\rightarrow\R$ by $\MK(s)=M(s)\log(e \vee K(s))$. Moreover, if $M$ assumes only strictly positive values we define $\Omega_M=\{z\in\C; -1/M(\absbig{\Im z}) < \Re z \leq 0\}$. Given $m\in\N_1$ and a function $K:\R_+\to(0,\infty)$ we define functions $K_m, K_{m,\log}:\R_+\to\R_+$ by $K_m(s)=s^mK(s)$ and $K_{m,\log}(s)=s^m\log(e\vee s)K(s)$.

For a measurable and exponentially bounded function $f:\R_+\to X$ with values in a Banach space $X$, we denote its Laplace transform by $\fhat$. Recall that $\fhat(z)=\int_0^\infty e^{-zt} dt$ exists as an absolutely convergent integral on a right half-plane $H_\omega=\{z\in\C;\Re z>\omega\}$ for a sufficiently large $\omega$. If $\fhat$ extends analytically to a connected domain, containing this half plane we still denote the resulting function by $\fhat$. Observe that a locally integrable function with bounded derivative is exponentially bounded because it can increase at most at a linear rate. Thus, the Laplace transform is absolutely convergent in $H_0$.

Let $K:\R_+\to(0,\infty)$ be a function. We say that $K$ has \emph{positive increase} (of index $a\geq0$) if there exist constants $C_a\geq 1$ and $s_a>0$ such that for all $s_a\leq s \leq R$
\begin{align}\label{eq:pos_inc}
  s^{-a} K(s) \leq C_{a} R^{-a}K(R) .
\end{align}
$K$ has positive increase of index $a+$ if it has positive increase of index $a+\ep$ for some $\ep>0$. If we do not specify the index, we mean for some \emph{strictly positive} index. Note that any non-decreasing function has positive increase of order $0$.

In Remarks \ref{rem:Ctilde_in_main}, \ref{rem:Ctilde_in_main_C0_local} and the formulation of Corollaries \ref{cor:main_C0_strip}, \ref{cor:main_C0} we make use of terminology involving partially ordered sets. Some ``constants'' in these remarks and theorems are actually \emph{non-decreasing functions} defined on cartesian products of certain partially ordered sets. The ordering on the cartesian product $S_1\times\ldots\times S_n$ of partially ordered sets $S_1,\ldots,S_n$ is defined via $(s_1,\ldots,s_n)\leq(t_1,\ldots,t_n):\Leftrightarrow \forall j: s_j\leq t_j$. Any space of real valued functions is partially ordered by the pointwise ordering. Given a partially ordered set $S$, a function $C:S\to[0,\infty)$ is called \emph{non-decreasing} if $s_1\leq s_2$ implies $C(s_1)\leq C(s_2)$ for any $s_1,s_2\in S$. The term \emph{non-increasing} is defined analogously, with the second inequality sign being reversed. We say that $C$ is \emph{purely numerical} if $C$ is a constant, i.e. there exists $C_0\in[0,\infty)$ such that $C(s)=C_0$ for all $s\in S$. We use the latter notion to express that certain constants in the formulation of a theorem actually do not depend on any objects occurring in the assumptions of the theorem. 


\section{The main result}\label{sec:main}
This section is devoted to the heart of our paper. We establish decay rates for a vector-valued function with bounded derivative under assumptions on the Laplace transform. Throughout the section $(X,\norm{\cdot})$ denotes a Banach space.
\begin{thm}\label{thm:main}
 Let $f:\R_+\rightarrow X$ be a locally integrable function such that its weak derivative $f^{(m)}$ of order $m\in \N_1$ is bounded. Let $M, K:\R_+\rightarrow (0,\infty)$ be continuous and non-decreasing functions such that for some $\ep\in(0,1)$ and $r_1,C_\ep>0$
 \begin{equation}\label{eq:K_vs_M}
  K(s) \leq C_\ep e^{e^{(sM(s))^{1-\varepsilon}}},\, s\geq r_1.
 \end{equation}
 Assume that for some $r\geq M(0)$ and some analytic function $\ftilde:B_r\rightarrow X$ the mapping $z\mapsto \fhat(z) - \ftilde(z)\log(z)$ is analytic on $B_r$. Assume furthermore that $\fhat$ extends analytically to $(\Omega_M\cup\C_+)\backslash\R_-$, continuously to $(\overline{\Omega_M}\cup\C_+)\backslash\R_-$ and that for some $\Chat>0$
 \begin{align}\label{eq:fhat_at_zero}
  \norm{z\fhat(z)} \leq \Chat K(\absbig{\Im z}) \text{ for all } z\in \Omega_M,\, \abs{\Im z}>r_1 .
 \end{align}
 For any $n\in\N_1$ there exist constants $C_m, t_m > 0$ such that 
 \begin{align}\label{eq:decay_rate}
  \normbig{f(t) + \ftilde_{n-1}\left( \frac{d}{dt} \right)t^{-1}}  \leq \sup_{-r<x<0} \frac{\| \ftilde^{(n)}(x) \|}{t^{n+1}} + \frac{C_m}{\MKtilde^{-1}(t)^m}
 \end{align}
 for all $t\geq t_m$, where $\ftilde_{n-1}$ is the Taylor polynomial of $\ftilde$ up to order $n-1$, and $\Ktilde=K_{m,\log}$. If $K$ has positive increase the theorem remains true for $\Ktilde=K_m$.
\end{thm}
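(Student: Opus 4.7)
The strategy is to represent $f(t)$ via a contour-integral inversion of its Laplace transform and then deform the contour into $\Omega_M$, where the exponential factor $e^{zt}$ decays but must be balanced against the growth of $\fhat(z)$ governed by $K$. Writing $\widehat{f^{(m)}}(z)=z^m\fhat(z)$ up to a polynomial coming from the initial values of the lower derivatives, the boundedness of $f^{(m)}$ gives $\|\fhat(is)\|\lesssim (1+|s|)^{-m}$, so an $m$-fold integration-by-parts version of the inversion formula converges absolutely along the imaginary axis. I then deform to a contour with three types of pieces: (i) a small Hankel-type loop inside $B_r$ wrapping the branch cut of $\log$ at the origin, (ii) two boundary arcs following $\partial\Omega_M$ from the loop up to heights $|\Im z|=R$, and (iii) two vertical return segments at $|\Im z|=R$ back to the imaginary axis, with the cutoff $R=R(t)$ to be optimised at the end.

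The small loop extracts the contribution of the logarithmic singularity. Writing $\ftilde(z)=\ftilde_{n-1}(z)+R_n(z)$ with $\|R_n(z)\|\le |z|^n\sup_{-r<x<0}\|\ftilde^{(n)}(x)\|/n!$, and exploiting the classical Hankel-type identity
\[
\frac{1}{2\pi i}\oint_{\mathrm{loop}} z^k\log(z)\,e^{zt}\,dz \;=\; (-1)^k k!\,t^{-k-1},
\]
the polynomial part of $\ftilde$ collapses to exactly $\ftilde_{n-1}(d/dt)\,t^{-1}$ (with the sign of the contour orientation chosen consistently with \eqref{eq:decay_rate}), while the Taylor remainder contributes $O\bigl(\sup_{-r<x<0}\|\ftilde^{(n)}(x)\|\,t^{-n-1}\bigr)$, producing the first summand on the right-hand side of \eqref{eq:decay_rate}. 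That the singularity is of pure $\log$-type, not $\log^k$, is what keeps this bookkeeping clean.

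Controlling the remaining two pieces is where the Newman fudge-factor trick enters. One multiplies $\fhat(z)e^{zt}$ by an auxiliary $B_{R,t}(z)$, analytic on $\Omega_M\cup\C_+$, with $|B_{R,t}(is)|\le 1$ on the imaginary axis (so the inversion integral is unchanged), $|B_{R,t}(z)|$ large enough on $\partial\Omega_M\cap\{|\Im z|\le R\}$ to compensate the factor $\Chat K(|\Im z|)$ from \eqref{eq:fhat_at_zero}, and small on the vertical return segments at $|\Im z|=R$. For polynomial $K$ the classical choice $(1+z/R)^N$ from \cite{BaDu08} suffices, but here $K$ may grow as fast as $\exp\exp((sM(s))^{1-\ep})$, forcing an essentially exponential-of-exponential fudge factor; the admissibility of such a factor on $\Omega_M\cup\C_+$ is exactly what the hypothesis \eqref{eq:K_vs_M} is tailored to ensure. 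Under this choice all three integrals are dominated by the horizontal boundary piece, which contributes at order $e^{-t/M(R)}K(R)/R^m$ times a logarithmic loss. The optimum in $R$ comes from the balance $t\asymp M(R)(\log K(R)+m\log R)=\MKtilde(R)$ with $\Ktilde=K_{m,\log}$, giving $C_m/\MKtilde^{-1}(t)^m$.

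The refinement $\Ktilde=K_m$ under positive increase is explained by the observation that the logarithmic loss above stems from a crude estimate of the form $\int_1^R ds/s=\log R$ for the boundary integral; when $K$ satisfies \eqref{eq:pos_inc} with a strictly positive index $a$, one has instead $\int_1^R K(s)s^{-1-a}\,ds\lesssim K(R)R^{-a}$ without the logarithmic factor, and the $\log$ can be dropped from $\Ktilde$. The main technical obstacle I expect is the construction and careful analysis of the fudge factor $B_{R,t}$: it must simultaneously be bounded by $1$ on the imaginary axis, be at least of size $K(|\Im z|)$ on the left boundary up to height $R$, and be small on the return segments, all while remaining analytic on $\Omega_M\cup\C_+$. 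Coordinating this construction with the optimal choice $R=\MKtilde^{-1}(t)$ is the delicate step on which the whole improvement over \cite{BaDu08,BaBoTo16} rests.
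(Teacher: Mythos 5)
Your high-level architecture matches the paper — contour deformation, a Newman-type fudge factor tuned so that hypothesis \eqref{eq:K_vs_M} makes it admissible, a Hankel-type loop to extract $\ftilde_{n-1}(d/dt)\,t^{-1}$ from the $\log$ singularity, the balance $t\asymp\MKtilde(R)$, and the positive-increase refinement coming from $\int^R K(s)\,ds/s$ losing its logarithm. But there is a genuine gap at the very first step. You claim that boundedness of $f^{(m)}$ yields $\|\fhat(is)\|\lesssim (1+|s|)^{-m}$, so that an $m$-fold integrated-by-parts inversion integral converges absolutely on $i\R$. This is false. Boundedness of $f^{(m)}$ controls $\widehat{f^{(m)}}$ only on $\C_+$, via $\|\widehat{f^{(m)}}(z)\|\le\|f^{(m)}\|_\infty/\Re z$, and that bound blows up as $\Re z\to 0$; the Laplace transform of a merely bounded function does not extend boundedly to the imaginary axis. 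In the present problem $\fhat$ does extend to $i\R\setminus\{0\}$, but only because we \emph{assume} it, and the size there is governed by \eqref{eq:fhat_at_zero}: $\|\fhat(is)\|\lesssim K(|s|)/|s|$, which can be as large as $\exp\exp((sM(s))^{1-\ep})/s$. So the proposed inversion integral along $i\R$ diverges wildly, and the deformation argument does not get off the ground. The paper avoids this by following Batty--Duyckaerts: it works not with $\fhat$ but with $h_t(z)=\ghat(z)-\int_0^t e^{-zs}g(s)\,ds$ where $g=-f'$ extended by zero, notes $f(t)=h_t(0)$, and applies Cauchy's formula for $h_t(\lambda)\psi(R^{-1}\lambda)e^{\lambda t}$ on a \emph{finite closed} contour $\gamma_1+\gamma_3$ surrounding $\lambda\downarrow 0$. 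On $\C_+$ one has $h_t(z)=\int_t^\infty e^{-zs}g(s)\,ds$, a tame object, and the $\int_0^t$ piece is entire, so the right-half contour is controllable by the Poisson-kernel estimate and the fudge factor's decay near $\pm iR$.

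A secondary confusion is the role and regularity of the fudge factor. You ask for $B_{R,t}$ analytic on $\Omega_M\cup\C_+$, bounded by $1$ on $i\R$, and ``large enough on $\partial\Omega_M$ to compensate $K$''. The paper's fudge factor $\psi(R^{-1}z)=c_k\exp(-\exp((2/(1+(R^{-1}z)^2))^k))$ is \emph{not} analytic on $\Omega_M\cup\C_+$: it has essential singularities at $\pm iR\in\overline{\Omega_M}$, and Lemma~\ref{lem:psi_near_poles} is precisely what guarantees rapid decay of $\psi$ along the specially chosen curved paths approaching $\pm iR$. Moreover, on the left-boundary segments (the paper's $\gamma_{32},\gamma_{36}$ at $\Re z=-1/M(R)$) the fudge factor is merely bounded; the compensation of the $K$-growth there comes from $e^{zt}=e^{-t/M(R)}$, not from $\psi$. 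The fudge factor's critical job is to make the integrals \emph{near} $\pm iR$ finite and small (hence estimate \eqref{eq: estimate on I37} with the double-exponential gain), which is exactly why \eqref{eq:K_vs_M} has the form it does.
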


\begin{rem}\label{rem:Ctilde_in_main}
 Sometimes it is desirable to know the dependencies of $C_m$ and $t_m$ on the hypotheses. Let us define $\norm{\ftilde}_{(-r,0)}=\sup\{\norm{f(x)}; x\in(-r,0)\}$, $\norm{\ftilde}_n=\max\{\norm{\ftilde^{(j)}(0)}; j\leq n-1\}$ and $\norm{f}_m = \max\{\norm{f^{(j)}(0)}; j\leq m-1\}$. Furthermore, let $\Chat_0>0$ be a constant such that
 \begin{align*}
  \sup\{\norm{z\fhat(z)}; z\in\Omega_M\backslash\R_-, \abs{\Im z}<r_1\} \leq \Chat_0 .
 \end{align*}
 In the following $C'\geq1$ (respectively $C'_0\geq 1$) describes a constant which can be seen as a non-decreasing function in the variables $1/M(r_1), \norm{f}_m/\Chat K(r_1)$ (respectively $1/M(r_1),\norm{f}_m/\Chat_0$). We refer the reader to the end of the introduction for an explanation what we precisely mean by \emph{non-decreasing} with respect to several variables. Now $t_m$ can be chosen to be a non-decreasing function depending on $m,1/K,M(r_1)$, $\ep\inv,r_1,r_1\inv,C'_0\Chat_0/C'\Chat$. With this choice of $t_m$ we can choose $C_m$ to be of the form
 \begin{align*}
  C_m=C'_m(\norm{f^{(m)}}_{\infty}+C'\Chat) + C''_m(\norm{\ftilde}_{(-r,0)}+\norm{\ftilde}_n)
 \end{align*}
 with $C'_m>0$ (respectively $C''_m$) being a non-decreasing function depending on $m,r_1\inv,\ep\inv,C_\ep$ from (\ref{eq:K_vs_M}) (respectively $m,n,\ep\inv,1/M(r_1),1/K(r_1)$). If the information about positive increase is used, $t_m$ also depends on $s_a$ and $C'_m$ also depends on $a\inv, C_a$ from (\ref{eq:pos_inc}). All this will be pointed out in the proof.
\end{rem}

For the readers convenience we give some examples of decay rates $R(t)=\MKtilde\inv(t)$ for a variety of possible choices for $M$ and $K$. We fix $m=1$ for simplicity. Let $\alpha,\alpha',\delta,\delta'\in(0,\infty), \beta\in[0,\infty), \gamma\in(0,1)$. We write $R(t)\lesssim R_1(t)$ if the left-hand side is estimated by a constant times the right-hand side for large $t$. We write $R(t)\approx R_1(t)$ if $R(t)\lesssim R_1(t)$ and the reverse inequality hold.

\emph{Exponential decay.}
\begin{itemize}
 \item[(a)] $M(s)=\delta\inv, K(s)\approx 1\vee s^\alpha$. Then $R(t) \approx \exp(\delta t/(1+\alpha))$.
\end{itemize}

\emph{Super-polynomial but sub-exponential decay.}
\begin{itemize}
 \item[(b)] $M(s)= \delta\inv\log(e\vee s)^\alpha$ and $K$ grows at most at a sub-polynomial rate. Then $\exp((c_1 \delta t)^{1/(1+\alpha)}) \lesssim R(t) \lesssim \exp((\delta t)^{1/(1+\alpha)})$ for all $c_1\in(0,1)$.
 \item[(c)] $M(s)= \delta\inv\log(e\vee s)^\alpha$, $K(s)\approx 1\vee s^{\alpha'}$. \\ Then $R(t) \approx \exp((\delta t/(1+\alpha'))^{1/(1+\alpha)})$.
 \item[(d)] $M(s)= \delta\inv \log(e\vee s)^\alpha$ and $K(s)\approx\exp(\delta'^{-1} \log(e\vee s)^{1+\alpha'})$. Then \\ $\exp((c_1 \delta\delta' t)^{1/(1+\alpha+\alpha')}) \lesssim R(t) \lesssim \exp((\delta\delta' t)^{1/(1+\alpha+\alpha')})$ for all $c_1\in(0,1)$.
\end{itemize}
 In (d) one can even take $c_1=1$ if $\alpha'\geq 1$.
 
\emph{Polynomial decay.}
\begin{itemize}
 \item[(e)] $M(s)\approx 1\vee s^\beta, K(s)\approx \exp(C s^{\alpha})$. Then $R(t)\approx t^{1/(\alpha+\beta)}$.
 \item[(f)] $M(s)\approx \log(e\vee s)^{\alpha'}, K(s)\approx \exp(C s^{\alpha})$. Then $R(t)\approx (t/\log(t)^{\alpha'})^{1/\alpha}$.
 \item[(g)] $M(s)\approx 1\vee s^\alpha, K(s)\approx 1\vee s^{\alpha'}$. Then $R(t)\approx (t/\log(t))^{1/\alpha}$.
\end{itemize}
 Note that in (e) we allow $\beta=0$. This is an important special case since it is relevant for the application presented in Section \ref{sec:waves_ext_dom}.
 
\emph{Logarithmic decay.}
\begin{itemize}
 \item[(h)] $1\lesssim M(s)\lesssim 1\vee s^\alpha$, $K(s)\approx\exp(\exp(Cs^{\gamma}))$. Then $R(t)\approx \log(t)^{1/\gamma}$.
 \item[(i)] $M(s) \approx \exp(Cs^{\alpha})$, $K(s)\approx \exp(Cs^{\alpha'})$. Then $R(t)\approx \log(t)^{1/\alpha}$.
 \item[(j)] $M(s) \approx \exp(Cs^{\alpha})$, $K(s)\approx \exp(\exp(Cs^{\alpha'}))$. Then $R(t)\approx \log(t)^{1/(\alpha+\alpha')}$.
\end{itemize}
 If in (h) we also have $M(s)\gtrsim 1\vee s^\alpha$ we could allow any $\gamma\in(0,1+\alpha)$. We remark here, that the known results from the literature cannot treat the cases (d), (e), (f), (h), (i) (here assuming $\alpha'>\alpha$) and (j). Our results concerning (a), (b) and (c) are sharper than known results, since in the literature the decay rate is essentially estimated by $R(ct)$ for a certain $c<1$ depending on $\alpha, \alpha'$, as discussed in the introduction. Excluding (b) and with only very minor restrictions in cases (g) and (i), in Theorem \ref{thm:optimality}, we prove the optimality of the obtained decay rate in a sense to be made precise in that theorem.

\begin{proof}[Proof of Theorem \ref{thm:main}]
For simplicity we assume $m=1$ and write $\Ktilde=K_{1,\log}$. At the very end of the proof we briefly explain the modification of the proof which leads to the conclusion of the theorem in case $m>1$. Let $k$ be a strictly positive natural number to be fixed later. We define the function $\psi:\C\backslash\{-i,+i\}\rightarrow\C$ by
\begin{align*}
 \psi(z) = c_k \exp \left( - \exp \left( \left(\frac{2}{1+z^2}\right)^k \right) \right), \text{ where } c_k=e^{e^{2^k}}.
\end{align*}

Let $R>0$ be a natural number to be chosen later (depending on $t$). Depending on $R$ and an additional parameter $\delta\in(0,r_1)$ we define now various contours for integration in the complex plane. Therefore we abbreviate $y_R=R-R(RM(R))^{-1/(k+2)}$. Clearly there is an $R_0\geq 0$, solely depending on $M,k$ and $r_1$ such that $y_R>r_1$ for all $R\geq R_0$. In the following we assume $R\geq R_0$.

\begin{align*}
 \gamma_{11} &= \{ R(x-i(1-x^{\frac{1}{k+2}}));\, x\in(0,1)\},  \\
 \gamma_{12} &= \{ R((1-x)+i(1-(1-x)^{\frac{1}{k+2}}));\, x\in(0,1)\},  \\
 \gamma_{21} &= \{ -R(x-i(1-x^{\frac{1}{k+2}}));\, x\in(0,1)\}, \\
 \gamma_{22} &= \{ -R((1-x)+i(1-(1-x)^{\frac{1}{k+2}}));\, x\in(0,1)\}, \\
 \gamma_{31} &= \{ -R(x-i(1-x^{\frac{1}{k+2}}));\, x\in(0,(RM(R))^{-1})\}, \\
 \gamma_{32} &= \{ - M(R)^{-1}+i(y_R-y);\, y\in(0,y_R-r_1) \}, \\
 \gamma_{33} &= \{ (1-\theta)(ir_1 - M(R)\inv) + \theta(i\delta - M(r_1)\inv) ; \theta \in (0,1) \} \\
 \gamma_{34} &= \{ x+i\delta;\, x\in(- M(r_1)^{-1},0)\}\cup
                \{(i\delta\cos\varphi,-i\delta\sin\varphi);\, \varphi\in(-\frac{\pi}{2},\frac{\pi}{2})\} \\ 
                &\phantom{= } \cup\{ -x-i\delta;\, x\in(0, M(r_1)^{-1})\} \\
 \gamma_{35} &= \{ (1-\theta)(-i\delta - M(r_1)\inv) + \theta( -ir_1 - M(R)\inv) ; \theta \in (0,1) \} \\
 \gamma_{36} &= \{ - M(R)^{-1}-iy;\, y\in(r_1, y_R)\}, \\
 \gamma_{37} &= \{ -R((1-x)+i(1-(1-x)^{\frac{1}{k+2}}));\, x\in(1-(RM(R))^{-1},1)\}.
\end{align*}

\begin{center}
  \begin{tikzpicture}[scale=0.4]
    \begin{scope}[shift={(0,0)}] 
      \draw[->, thin] (-9,0) -- (9,0) ; 
      \draw[->, thin] (0,-9) -- (0,9) ; 
      \draw[ultra thick, domain=0:1/8] plot ( 8*\x, {-8*(1-pow(\x,1/3))} ); 
      \draw[ultra thick, domain=1/8:1] plot ( 8*\x, {-8*(1-pow(\x,1/3))} ); 
      \draw[ultra thick, domain=1/8:1] plot ( 8*\x, {8*(1-pow(\x,1/3))} ); 
      \draw[ultra thick, domain=0:1/8] plot ( 8*\x, {8*(1-pow(\x,1/3))} ); 
      \draw[ultra thick, domain=1/8:1] plot ( -8*\x, {8*(1-pow(\x,1/3))} ); 
      \draw[ultra thick, domain=0:1/8] plot ( -8*\x, {8*(1-pow(\x,1/3))} ); 
      \draw[ultra thick, domain=0:1/8] plot ( -8*\x, {-8*(1-pow(\x,1/3))} ); 
      \draw[ultra thick, domain=1/8:1] plot ( -8*\x, {-8*(1-pow(\x,1/3))} ); 
      \draw[fill] (8,0) circle [radius=0.15]; \node[below] at (8,0) {$R$};
      \draw[fill] (0,8) circle [radius=0.15]; \node[right] at (0,8) {$iR$};
      \draw[fill] (-8,0) circle [radius=0.15]; \node[above] at (-8,0) {$-R$};
      \draw[fill] (0,-8) circle [radius=0.15]; \node[left] at (0,-8) {$-iR$};
      \node[below] at (3,-2.5) {$\gamma_{11}$};
      \node[above] at (3,2.5) {$\gamma_{12}$};
      \node[above] at (-3,2.5) {$\gamma_{21}$};
      \node[below] at (-3,-2.5) {$\gamma_{22}$};
    \end{scope}
    \begin{scope}[shift={(12,0)}] 
      \draw[->, thin] (-2,0) -- (9,0) ; 
      \draw[->, thin] (0,-9) -- (0,9) ; 
      \draw[ultra thick, domain=0:1/8] plot ( 8*\x, {-8*(1-pow(\x,1/3))} ); 
      \draw[ultra thick, domain=1/8:1] plot ( 8*\x, {-8*(1-pow(\x,1/3))} ); 
      \draw[ultra thick, domain=1/8:1] plot ( 8*\x, {8*(1-pow(\x,1/3))} ); 
      \draw[ultra thick, domain=0:1/8] plot ( 8*\x, {8*(1-pow(\x,1/3))} ); 
      \draw[ultra thick, domain=0:1/8] plot ( -8*\x, {8*(1-pow(\x,1/3))} ) ; 
      \draw[ultra thick] (-1,4) -- (-1,2) -- (-2,0.5) -- (0,0.5); 
      \draw[ultra thick] (0,-0.5) arc [radius=0.5, start angle = -90, end angle = 90]; 
      \draw[ultra thick] (0,-0.5) -- (-2,-0.5) -- (-1,-2) -- (-1,-4); 
      \draw[ultra thick, domain=0:1/8] plot ( -8*\x, {-8*(1-pow(\x,1/3))} ); 
      \draw[fill] (8,0) circle [radius=0.15]; \node[below] at (8,0) {$R$};
      \draw[fill] (0,8) circle [radius=0.15]; \node[right] at (0,8) {$iR$};
      \draw[fill] (0,-8) circle [radius=0.15]; \node[left] at (0,-8) {$-iR$};
      \draw[fill] (-1,4) circle [radius=0.15];
      \draw[fill] (-1,2) circle [radius=0.15];
      \draw[fill] (-1,-4) circle [radius=0.15];
      \draw[fill] (-2,0.5) circle [radius=0.15];
      \draw[fill] (-2,-0.5) circle [radius=0.15];
      \draw[fill] (-1,-2) circle [radius=0.15];
      \draw[thin] (-0.1,2) -- (0.1,2); \node[right] at (0,2) {$ir_1$};
      \draw[thin] (-0.1,4) -- (0.1,4); \node[right] at (-0.2,3.2) {$iy_R$};
      \draw[<->, thin] (0.8,0) -- (0.8,0.5); \node[right] at (0.8,0.35) {$\delta$};
      \draw[thin] (2,-0.1) -- (2,0.1); \node[below] at (2,0) {$\lambda$};
      \draw[<->, thin] (-1,5) -- (0,5); \node[left] at (-1,5) {$\frac{-1}{M(R)}$};
      \draw[<->, thin] (-2,-5) -- (0,-5); \node[left] at (-2,-5) {$\frac{-1}{M(r_1)}$};
      \node[below] at (3,-2.5) {$\gamma_{11}$};
      \node[above] at (3,2.5) {$\gamma_{12}$};
      \node[left] at (-0.2,6) {$\gamma_{31}$};
      \node[left] at (-1,3) {$\gamma_{32}$};
      \node[above] at (-2.1,1) {$\gamma_{33}$};
      \node[below] at (0.8,-0.3) {$\gamma_{34}$};
      \node[below] at (-2.1,-1) {$\gamma_{35}$};
      \node[left] at (-1,-3) {$\gamma_{36}$};
      \node[left] at (-0.2,-6) {$\gamma_{37}$};
    \end{scope}
  \end{tikzpicture}
\end{center}

Since we plan to consider the limit $\delta\downarrow0$ we may assume that none of the contours intersects another one. If we have to use a parametrization of one of these contours we do it via $x,y,\theta$ or $\varphi$ as indicated in the definitions of the contours. This also determines an orientation of the paths. Moreover, we define $\gamma_1=\gamma_{11}+\gamma_{12}$, $\gamma_2=\gamma_{21}+\gamma_{22}$ and $\gamma_3=\gamma_{31}+\ldots+\gamma_{37}$. Note that $\gamma_1+\gamma_2$ and $\gamma_1+\gamma_3$ are closed paths encircling each of the points from the interval $(\delta,R)$. Also note that the derivative of the parametrization of any of the above paths approaching $+iR$ or $-iR$ can be estimated by a constant times $Rx^{-1}$ or $R(1-x)\inv$, depending on whether $x=0$ or $x=1$ corresponds to the point $\pm iR$.

Now let us define the bounded function $g:\R\rightarrow X$ via $g(t)=-f'(t)$ for positive $t$ and extend it by $0$ for negative arguments. Observe that $\ghat(z)=-z\fhat(z)+f(0)$. Without loss of generality we may assume that $f(0)=0$, otherwise we could replace $f$ by $f_0=f-f(0)\chi$ where $\chi\in C_c^{\infty}((-1,1))$ satisfies $\chi(0)=1$. Note that for example $\norm{z\fhat_0(z)}\leq C'\Chat K(\abs{\Im z}), z\in\Omega_m, \abs{\Im z}\geq r_1$ for a constant $C'$ as in Remark \ref{rem:Ctilde_in_main}. Let us define the function $h_t$ on $\C_+$ by 
\begin{align*}
 h_t(z) = \ghat(z) - \int_0^t e^{-zs} g(s) ds .
\end{align*}
By assumptions, $\ghat$ and $h_t$ extend to analytic functions on $(\Omega_M\cup \C_+)\backslash\R_-$, which are continuous on $(\cl{\Omega_M}\cup \C_+)\backslash\R_-$. Observe that $f(t)=h_t(0)$, if we extend $\ghat$ continuously by $0$ at $0$. Therefore
\begin{align*}
 f(t) &= \lim_{\lambda\downarrow0} h_t(\lambda) \psi(R^{-1}\lambda) e^{\lambda t} \\
 &= \lim_{\lambda\downarrow0} \lim_{\delta\downarrow0} \frac{1}{2\pi i}
    \int_{\gamma_1+\gamma_3} \psi(R^{-1}z) h_t(z) e^{zt} \frac{dz}{z-\lambda} \\
 &= \lim_{\lambda\downarrow0} \frac{1}{2\pi i}
    \int_{\gamma_1} \psi(R^{-1}z) \left( \ghat(z) - \int_0^t e^{-zs} g(s) ds \right) e^{zt} \frac{dz}{z-\lambda} \\
 &+ \lim_{\lambda\downarrow0} \frac{1}{2\pi i}
    \int_{\gamma_2} \psi(R^{-1}z) \left( - \int_0^t e^{-zs} g(s) ds \right) e^{zt} \frac{dz}{z-\lambda} \\
 &+ \lim_{\lambda\downarrow0} \lim_{\delta\downarrow0} \frac{1}{2\pi i}
    \int_{\gamma_3} \psi(R^{-1}z) \ghat(z) e^{zt} \frac{dz}{z-\lambda} \\
 &=: I_1 + I_2 + I_3 .
\end{align*}
Actually at the moment we do not know whether the integrals above really exist since $\psi$ has (essential) singularities at $\pm i$. However, the following lemma fixes this problem. It implies that $\psi(R\cdot)$ is bounded on all the contours and decays fast enough (for our purposes) as $z$ approaches $\pm iR$ along $\gamma_1,\gamma_2$ or $\gamma_3$. Thus - in the spirit of Newman \cite{Ne80} - our $\psi$ serves as a ``fudge factor'' in our Cauchy integrals.
\begin{lem}\label{lem:psi_near_poles}
 Let $\varepsilon\in(0,1)$ and $k\in\N_1$ with $k > 2\varepsilon^{-1} - 2$. There exists $C>0$, solely depending on $k$ such that
 \begin{align*}
  \absbig{\psi(z)} \leq C \exp(-\exp( x^{-(1-\varepsilon)} ))
 \end{align*}
 for all $z\in\C$ which can be represented as
 \begin{align*}
  z = x + i(1-y) \text{ or } z = x + i (-1+y)
 \end{align*}
 where $y\in(0,1)$ and  $\absbig{x} = y^{k+2}$.
\end{lem}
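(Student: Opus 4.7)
The approach is to bound $\Re(e^w)$ from below, where $w := (2/(1+z^2))^k$, since $\absbig{\psi(z)} = c_k \exp(-\Re(e^w))$. By the symmetries $\psi(\cl z) = \cl{\psi(z)}$ and $\psi(-z) = \psi(z)$, the value $\absbig{\psi(z)}$ is the same at all four points $\pm x \pm i(1-y)$, so the plan is to reduce to the case $z = x + i(1-y)$ with $x = y^{k+2} > 0$ and $y \in (0,1)$. Expanding $u := 1+z^2 = (x^2 + y(2-y)) + 2ix(1-y)$, for $y \in (0,1)$ one has $y \leq \Re u \leq 3y$ and $\absbig{\Im u} \leq 2y^{k+2}$, whence $\absbig{u} \leq 4y$ and
\begin{equation*}
 \absbig{\arg u} \leq \absbig{\Im u}/\Re u \leq 2 y^{k+1}.
\end{equation*}
Consequently $\absbig{w} \geq 2^{-k} y^{-k}$ and $\absbig{\arg w} = k\absbig{\arg u} \leq 2k y^{k+1}$.

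For $y$ small enough (depending on $k$) that $2k y^{k+1} \leq \pi/4$ I would conclude $\cos(\arg w) \geq 1/\sqrt{2}$ and therefore $\Re w \geq 2^{-k-1/2} y^{-k}$. Simultaneously $\absbig{\Im w} \leq \absbig{w}\absbig{\arg w} \leq 2^{k+1} k y$, which is at most $1/2$ once $y$ is small enough, giving $\cos(\Im w) \geq \cos(1/2) > 0$. Combining these yields
\begin{equation*}
 \Re(e^w) = e^{\Re w} \cos(\Im w) \geq \cos(1/2)\exp(\kappa y^{-k})
\end{equation*}
with $\kappa := 2^{-k-1/2}$. The hypothesis $k > 2\varepsilon^{-1} - 2$ is equivalent to $(k+2)(1-\varepsilon) < k$, so $y^{-k}$ strictly dominates $\absbig{x}^{-(1-\varepsilon)} = y^{-(k+2)(1-\varepsilon)}$ as $y \downarrow 0$. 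It is then elementary to absorb the prefactor and conclude $\kappa y^{-k} \geq \absbig{x}^{-(1-\varepsilon)}$ for all sufficiently small $y$, yielding $\absbig{\psi(z)} \leq c_k \exp(-\exp(\absbig{x}^{-(1-\varepsilon)}))$ in this regime.

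It remains to handle the case where $y$ is bounded away from zero. There $z$ stays uniformly away from the poles $\pm i$ of $\psi$, so $\psi$ is bounded on the relevant set, and at the same time $\absbig{x} = y^{k+2}$ is bounded away from zero, so $\exp(-\exp(\absbig{x}^{-(1-\varepsilon)}))$ is bounded below by a strictly positive constant; enlarging the constant $C$ accordingly covers this regime. The main technical subtlety will be the balance between the estimates for $\Re w$ and $\Im w$: the specific choice $\absbig{x} = y^{k+2}$ (rather than a larger power of $y$) is exactly what forces $\absbig{\arg w}$ down fast enough to keep $\cos(\Im w)$ bounded below, while still allowing $\Re w$ to grow like $y^{-k}$ and to dominate $\absbig{x}^{-(1-\varepsilon)}$ under the hypothesis $k > 2\varepsilon^{-1}-2$.
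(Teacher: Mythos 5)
Your proof is correct and follows essentially the same route as the paper's: reduce by symmetry to $z = x + i(1-y)$ with $x > 0$, compute the asymptotics of $w = (2/(1+z^2))^k$ to show $\Re w \sim y^{-k}$ while $\Im w$ stays bounded, and then use $(k+2)(1-\varepsilon) < k$ to conclude that $\exp(\Re w)$ dominates $\exp(|x|^{-(1-\varepsilon)})$ for small $y$. The only difference is cosmetic: the paper phrases the estimates in soft $o(1)$ language while you carry explicit constants, which is perfectly fine.
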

\begin{proof}
 By symmetry of the function $\psi$ it suffices to consider the case where $z$ can be represented as $z = x + i(1-y)$ for $y\in(0,1)$ and  $\absbig{x} = y^{k+2}$. Clearly $\psi$ is bounded if $z$ stays away from $i$. Thus it suffices to consider the asymptotic behaviour of $\psi(z)$ as $y$ approaches $0$. We have $x=o(y)$ as $y\rightarrow0$ and 
 \begin{align*}
  \frac{1}{1+z^2} = \frac{a-ib}{a^2+b^2} \text{ with } a&=y(2-y)+x^2=(2+o(1))y \\ \text{ and } b&=2x(1-y)=(2+o(1))x.
 \end{align*}
 Therefore a short calculation yields
 \begin{align*}
  \left( \frac{2}{1+z^2} \right)^k = (1+o(1))y^{-k} + io(1)
 \end{align*}
 as $y\downarrow0$. Here and in the following $o(1)$ replaces \emph{real valued} terms converging to zero as $y\downarrow0$. The last line in turn implies
 \begin{align*}
  \Re \exp \left( \left( \frac{2}{1+z^2} \right)^k \right) = e^{(1+o(1))y^{-k}}.
 \end{align*}
 This yields the claim.
\end{proof}
\textit{Estimation of $I_1$.} By dominated convergence we can perform the limit $\lambda\downarrow0$ by simply setting $\lambda=0$ in the integral.  We further split the integral $I_1=I_{11}+I_{12}$ according to the decomposition of the path $\gamma_1=\gamma_{11}+\gamma_{12}$. Using $\absbig{\dot{\gamma}_{11}}(x)\leq C R x^{-1}$ and assuming $k>2\ep\inv-2$ we get
\begin{align}\label{eq: I11 modify proof for large m}
 \normbig{I_{11}} &= \normbig{ \int_0^1 \int_t^{\infty} \underbrace{\psi(R^{-1}\gamma_{11}(x))}_{= o(x^{\infty}) \text{ by Lemma \ref{lem:psi_near_poles}}} e^{-(s-t)\gamma_{11}(x)} g(s) ds  \frac{\dot{\gamma}_{11}(x)}{\gamma_{11}(x)} dx} \\ \nonumber
 &\leq C \int_{-\infty}^{\infty} \underbrace{\int_0^1 e^{-sRx}x dx}_{\leq C(1+Rs)^{-2}} \normbig{g(t+s)} ds \\ \nonumber
 &\leq \frac{C}{R} P_{R^{-1}}*\normbig{g}(t) \leq C \frac{\norm{f'}_{\infty}}{R}.
\end{align}
Here $P_y:\R\to(0,\infty)$ for $y>0$ is the Poisson kernel which is given by $P_y(t)=y/(\pi(t^2+y^2))$. We have proved that
\begin{align}\label{eq: estimate on I1}
 I_1 \leq \frac{C}{R} P_{R^{-1}}*\normbig{g}(t) \leq C \frac{\norm{f'}_{\infty}}{R}
\end{align}
since the estimation of $I_{12}$ is analogous.

\textit{Estimation of $I_2$.} This is almost the same procedure as in the estimation of $I_1$. Again we can perform the limit $\lambda\downarrow0$ by simply setting $\lambda=0$ in the integral and we split the integral $I_2=I_{21}+I_{22}$ according to the decomposition of the path $\gamma_2=\gamma_{21}+\gamma_{22}$.
\begin{align*}
 \normbig{I_{21}} &= \normbig{ \int_0^1 \int_0^t \underbrace{\psi(R^{-1}\gamma_{21}(x))}_{= o(x^{\infty}) \text{ by Lemma \ref{lem:psi_near_poles}}} e^{-(t-s)\gamma_{21}(x)} g(s) ds  \frac{\dot{\gamma}_{21}(x)}{\gamma_{21}(x)} dx }  \\
 &\leq C \int_{-\infty}^{\infty} \underbrace{\int_0^1 e^{-sRx}x dx}_{\leq C(1+Rs)^{-2}} \normbig{g(t-s)} ds \\
 &\leq \frac{C}{R} P_{R^{-1}}*\normbig{g}(t) \leq C \frac{\norm{f'}_{\infty}}{R}.
\end{align*}
Again the estimation of $I_{22}$ is analogous and we have therefore proved
\begin{align}\label{eq: estimate on I2}
 I_2 \leq \frac{C}{R} P_{R^{-1}}*\normbig{g}(t) \leq C \frac{\norm{f'}_{\infty}}{R}.
\end{align}
Note that $C$ in (\ref{eq: estimate on I1}) and (\ref{eq: estimate on I2}) solely depends on $k$ since $\psi$ and (more importantly) the path of integration solely depend on $k$ and on $R$.

\textit{Estimation of $I_3$.} We split the integral $I_3=I_{31}+\ldots+I_{37}$ according to the decomposition of the path $\gamma_3=\gamma_{31}+\ldots+\gamma_{37}$. It suffices to investigate $I_{34}, I_{35}, I_{36}$ and $I_{37}$ since the estimation of $I_{31}, I_{32}$ and $I_{33}$ is similar to the estimation of $I_{37}, I_{36}$ and $I_{35}$. Therefore, without loss of generality we can ignore $I_{31}, I_{32}$ and $I_{33}$ in the following. By dominated convergence we can perform the limits $\delta\downarrow0$ and $\lambda\downarrow0$ by simply setting $\delta=\lambda=0$ in the integrals $I_{37},I_{36}$ and $I_{35}$. The limits in the integral $I_{34}$ are performed later on.

Let us fix $k=\lceil4\varepsilon^{-1}-2\rceil$, where $\varepsilon$ is as in (\ref{eq:K_vs_M}), and recall Lemma \ref{lem:psi_near_poles}. Since $\abs{\dot{\gamma}_{37}}\leq CR(1-x)\inv$ for a $C$ solely depending on $\ep\inv$ we get
\begin{align}\nonumber
 \normbig{I_{37}} &\leq \int_{1-(RM(R))^{-1}}^1 \absbig{\psi(R^{-1}\gamma_{35}(x))} \normbig{\ghat(\gamma_{35}(x))} \absbig{\dot{\gamma}_{35}(x)} \frac{dx}{\absbig{\gamma_{35}(x)}} \\ \nonumber
 &\leq C\Chat \int_{1-(RM(R))^{-1}}^1 (1-x)\inv \exp(-\exp((1-x)^{-(1-\varepsilon/2)})) K(R) dx \\ \label{eq: estimate on I37}
 &\leq C\Chat K(R) \exp(-\exp((RM(R))^{-(1-\varepsilon)})) \leq C \frac{\Chat}{R}.
\end{align}
In (\ref{eq: estimate on I37}) the constant $C$ depends on $\ep\inv$ as well as on $C_\ep$ since we used (\ref{eq:K_vs_M}) in the last inequality.
Without loss of generality we may assume that $R_0\geq e > 1$. Thus, for all $R\geq R_0$ we get
\begin{align}\nonumber
 \normbig{I_{36}} &\leq \int_{r_1}^{y_R} \absbig{\psi(R^{-1}\gamma_{36}(y))} \normbig{\ghat(\gamma_{64}(y))} e^{-\frac{t}{M(R)}} \absbig{\dot{\gamma}_{36}(y)} \frac{dy}{\absbig{\gamma_{36}(y)}} \\ \nonumber
 &\leq C \Chat \int_{r_1}^R (r_1+y)\inv K(y) e^{-\frac{t}{M(R)}} dy \\ \label{eq: estimate on I36}
 &\leq C \Chat \log(R) K(R) e^{-\frac{ t}{M(R)}} .
\end{align}
Here $C$ depends on $\ep\inv$ and $r_1\inv$. Note that in (\ref{eq: estimate on I36}) the term $\log(R)$ can be avoided if $K$ has positive increase. A similar but easier calculation shows that $\norm{I_{35}}\leq C_1\Chat_0 e^{-t/M(R)}$, for a constant $C_1$ solely depending on $M(r_1),\ep\inv, r_1$ and $r_1\inv$. Therefore there exists $R_1\geq R_0$ only depending on $1/K,M(r_1),\ep\inv,r_1,r_1\inv$ and $\Chat_0/\Chat$ such that $\Chat\log(R)K(R)\geq C_1\Chat_0$ for all $R\geq R_1$. Thus $\norm{I_{35}}$ can be absorbed into the above estimate on $I_{36}$, if we assume $R\geq R_1$. Therefore, we assume $R\geq R_1$ in the following. 

Before we finally consider the integral $I_{34}$, let us first summarize what we obtained so far. By (\ref{eq: estimate on I1}), (\ref{eq: estimate on I2}), (\ref{eq: estimate on I37}) and (\ref{eq: estimate on I36}) we have for $R\geq R_1$
\begin{align}\label{eq: this is to optimize} 
 \norm{f(t)-I_{34}} \leq \frac{C}{R} \left( \norm{f'}_{\infty} + \Chat R\log(R)K(R)e^{-\frac{t}{M(R)}} \right) .
\end{align}
Here $C$ solely depends on $\ep\inv, C_\ep$ and $r_1\inv$. The choice $R=\MKtilde^{-1}(t)$ yields for the same value of $C$ that
\begin{align}\label{eq: estimate on f-I34}
 \normbig{f(t)-I_{34}} \leq \frac{C(\norm{f'}_{\infty}+\Chat)}{\MKtilde^{-1}(t)} 
\end{align}
for all $t\geq t_1 := \MKtilde(R_1)$, what we assume throughout the proof from now on.
If $K$ has positive increase one can improve (\ref{eq: estimate on I36}) by removing the term $\log(R)$ - as noted above. This allows us to remove the logarithmic term from (\ref{eq: this is to optimize}), which in turn allows us to define $\Ktilde=K_1$. 

Now let us turn to the estimation of $I_{34}$. Observe that $\ftilde$ satisfies for $0<x<r$
\begin{align*}
 \ftilde(x) = \frac{1}{2\pi i} \cdot \frac{1}{x} \cdot \lim_{\delta\downarrow0} (\ghat(-x+i\delta)-\ghat(-x-i\delta)).
\end{align*}
Note that, by assumptions on $f$, the terms to the right of the limit are uniformly bounded. Thus by dominated convergence and a change of variables (we replace $x$ by $-x$ in the parametrization of the first sub-path of $\gamma_{34}$) we get
\begin{align*}
 I_{34} 
 &= -\int_0^{\frac{1}{M(r_1)}} e^{-xt} \left[ 1 + (\psi - 1)\right](-R\inv x)  \left[ \ftilde_{n-1} + (\ftilde - \ftilde_{n-1}) \right](-x)  dx.
\end{align*}
We show now that neglecting the terms $\psi - 1$, $\ftilde - \ftilde_{n-1}$ and then integrating from $0$ to $\infty$ in the above integral produces an error of order at most $t^{-n-1} + 1/\MKtilde^{-1}(t)$. First, we observe, using boundedness of $\ftilde$, $\psi(0)=1$ and $\psi'(0)=0$, that there exists a $C>0$ depending on $1/M(r_1),\ep\inv$ such that for $t > 0$
\begin{align}\label{eq: improve I34 first}
 \normbig{\int_0^{\frac{1}{M(r_1)}} (\psi(-R\inv x) - 1) e^{-xt} \ftilde(-x) dx} \leq C \frac{\norm{\ftilde}_{(-r,0)}}{R^2} .
\end{align}
This is actually a rather crude estimate since we did not use the effect of the exponential function under the integral. However, it suffices for our purposes. Using the standard integral representation of the Gamma function and the standard remainder estimate for the Taylor polynomial we get
\begin{align}\label{eq: ftilde reminder estimate}
 \int_0^{\frac{1}{M(r_1)}} e^{-xt} \normbig{\ftilde(-x) - \ftilde_{n-1}(-x)} dx \leq \sup_{-r<x<0} \frac{\|\ftilde^{(n)}(x)\|}{t^{n+1}}.
\end{align}
The fact that $\ftilde_{n-1}$ is a polynomial, together with $\Ktilde(R)\geq R K(r_1)$ yields for all $t\geq t_1$, assuming without loss of generality that $t_1\geq 1$
\begin{align}\label{eq: improve I34 second}
 \normbig{ \int_{\frac{1}{M(r_1)}}^{\infty} e^{-xt} \ftilde_{n-1}(-x) dx } \leq C \Ktilde(R)^{-\frac{M(R)}{M(r_1)}} \norm{\ftilde}_n \leq C\frac{\norm{\ftilde}_n}{R}.
\end{align}
Here $C$ depends on $n,1/M(r_1),1/K(r_1)$. Let $a_j$ for $j\in\N$ be the $j$-th Taylor coefficient of $\ftilde$, that is $\ftilde(z)=\sum_{j=0}^{\infty} a_j z^j$. Then
\begin{align*}
 \frac{1}{t} \int_0^{\infty} e^{-y} \ftilde_{n-1}(-t^{-1} y) dy &= -\sum_{j=0}^{n-1} a_j (-t^{-1})^{j+1} \int_0^{\infty} e^{-y} y^j dy  \\
 &= -\sum_{j=0}^{n-1} a_j j! (-t^{-1})^{j+1} 
 = \ftilde_{k-1}\left( \frac{d}{dt} \right) \frac{1}{t}
\end{align*}
We proved the existence of a constant $C>0$ depending on $1/M(r_1),1/K(r_1),\ep\inv$ such that for all $t\geq t_0$
\begin{align}\label{eq: estimate on I34}
 \normbig{I_{34} + \ftilde_{n-1}\left( \frac{d}{dt} \right)t^{-1} } \leq \sup_{-r<x<0} \frac{\|\ftilde^{(n)}(x)\|}{t^{k+1}} + C\frac{\norm{\ftilde}_{(-r,0)}+\norm{\ftilde}_n}{\MKtilde^{-1}(t)}.
\end{align}
If we combine (\ref{eq: estimate on f-I34}) and (\ref{eq: estimate on I34}) we get the desired decay rate.

\textit{The case $m>1$.} In order to improve (\ref{eq: estimate on f-I34}) to 
\begin{align*}
 \normbig{f(t)-I_{34}} \leq \frac{C (\norm{f^{(m)}}_{\infty} + \Chat)}{\MKtildem^{-1}(t)^m} ,
\end{align*}
it suffices to improve (\ref{eq: this is to optimize}) to 
\begin{align*}
 \normbig{f(t)-I_{34}} \leq \frac{C}{R^m} \left( \norm{f^{(m)}}_{\infty} + \Chat R^{m}\log(R)K(R)e^{-\frac{t}{M(R)}} \right) .
\end{align*}
We can achieve this if we can replace the final $C/R$ bound in (\ref{eq: estimate on I1}), (\ref{eq: estimate on I2}) and (\ref{eq: estimate on I37}) by a bound $C/R^m$. For (\ref{eq: estimate on I37}) this is easy since the estimation above actually shows the better bound $C/R^{m'}$ for any $m'\in\N$. To get the better bound in (\ref{eq: estimate on I1}) we use that for example $\gamma_{11}(x)$ is bounded from below by a constant times $R$. Observing that
\begin{align*}
 \frac{1}{\gamma_{11}(x)^{m-1}} \left(-\frac{d}{ds}\right)^{m-1} e^{-(s-t)\gamma_{11}(x)} = e^{-(s-t)\gamma_{11}(x)}
\end{align*}
we see that an integration by parts argument - using that $g^{(m-1)}$ is bounded - yields the desired $C\norm{f^{(m)}}_{\infty}/R^m$ bound. Actually, performing the integration by parts yields boundary terms, involving $g(s),g'(s),\ldots, g^{(m-2)}(s)$ at $s=t$ and $s=0$. However, for $s=t$ exactly the same boundary terms with opposite sign occur if we do the same trick for the estimation of $I_{21}$. So if we estimate directly the sum $I_{11}+I_{21}$ and use that $\psi$ is symmetric we see that the boundary terms at $s=t$ cancel out. For $I_{12}+I_{22}$ we do the same trick and get the improved estimate for (\ref{eq: estimate on I1}) and (\ref{eq: estimate on I2}) if we manage to handle the boundary terms at $s=0$. Fortunately we get get rid of those boundary terms by assuming that all derivatives of $f$ up to order $m-1$ vanish at zero. To achieve this, we simply have to replace $f$ by $f-f_0$ with $f_0$ given by
\begin{align*}
 f_0(s) = \sum_{j=0}^{m-1} \frac{1}{j!} f^{(j)}(0)s^j \chi(s), s\geq 0,
\end{align*}
and $\chi\in C_c^\infty((-1,1);\C)$ being equal to $1$ in a neighbourhood of $0$.

The proof is complete if we can improve (\ref{eq: improve I34 first}) to
\begin{align*}
 \normbig{ \int_0^{\frac{1}{M(r_1)}} ( 1 - \psi(-R\inv x) ) e^{-xt} \ftilde(-x) dx} \leq C \frac{\norm{\ftilde}_{(-r,0)}}{R^m}
\end{align*}
and (\ref{eq: improve I34 second}) to 
\begin{align*}
 \normbig{ \int_{\frac{1}{M(r_1)}}^{\infty} e^{-xt} \ftilde_{n-1}(-x) dx } \leq C \frac{\norm{\ftilde}_n}{R^m} .
\end{align*}
The second goal is already satisfied and can be seen by the same argument as in case $m=1$, since $\Ktilde(R)\geq R^mK(r_1)$ in this case. We could achieve the first goal if $\psi^{(j)}(0)=0$ for all $j=1,\ldots,m-1$. Our current fudge factor does not satisfy this for $m > 2$. However, if we replace it by 
\begin{align*}
 \psi_m(z) = c_{mk} \exp \left( - \exp \left( \left(\frac{4m+2}{1+z^{4m+2}}\right)^k \right) \right), \text{ where } c_{mk}=e^{e^{(4m+2)^k}},
\end{align*}
then this property is satisfied. One only has to check now that this new fudge factor works as well as the old one in the other parts of the proof. In particular we mention that $\psi_m$ also satisfies Lemma \ref{lem:psi_near_poles}. The proof of Theorem \ref{thm:main} is finished.
\end{proof}

\begin{rem}
Unfortunately we are not able to prove Theorem \ref{thm:main} if we allow $\ep=0$ in the condition (\ref{eq:K_vs_M}) between $M$ and $K$.
However, we can slightly relax (\ref{eq:K_vs_M}) to the following constraint
\begin{align}\label{eq:K_vs_M_relax}
 K(s) = O\left(\exp\left(\exp\left( \frac{sM(s)}{\tilde{L}_{N,\varepsilon}(sM(s))} \right)\right)\right),\, s\to\infty
\end{align}
for some $\varepsilon>0$ and $N\in\N_1$. Here for $j, N\in\N_1$ and $s\geq0$ we denote
\begin{align*}
 \tilde{L}_{N,\varepsilon}(s) = L_1(s)\cdot \ldots \cdot L_{N-1}(s) \cdot L_N(s)^{1+\varepsilon}
 \text{ and }
 L_j(s) = \underbrace{\log \circ \ldots \circ \log}_{j \text{ times}} (1+j+s).
\end{align*}
The proof of Theorem \ref{thm:main} changes only in the choice of the fudge factor $\psi$ and the contours $\gamma_1,\gamma_2,\gamma_{31}$ and $\gamma_{35}$. What we need in the proof is that $\psi(R\cdot)$ is bounded in the domain enclosed by $\gamma_1+\gamma_2$, we have $\psi(z)=O(\absbig{\Re z}^{\infty})$ if $z\rightarrow\pm i$ within this domain and that we can control the absolute value of $\psi(R^{-1} z)K(R)$ for $\absbig{\Re z}\leq 1/(RM(R))$. See for example the estimation of $I_{37}$ in (\ref{eq: estimate on I37}) for the reason why we need the last mentioned control.

To achieve all these things we define (in case $m=1$) the fudge factor by
\begin{align*}
 \psi(z) = c_{nk} \exp \left( - \exp_{n+1} \left( \left( \frac{2}{1+z^2} \right)^k \right) \right)
\end{align*}
for a $k\in \N_1$ to be chosen. The positive real number $c_{nk}$ is chosen in such a way that $\psi(0)=1$. By $\exp_j$ we denote the composition of $j$ exponential functions. Moreover we define $\chi:\R_+\rightarrow\R$ by
\begin{align*}
 \chi(y) = \frac{y^{k+2}}{\prod_{j=1}^n \exp_j(y^{-k})}.
\end{align*}
Observe that $(\chi^{-1})'(x)=o(x^{-1})$ as $x\rightarrow 0$. In the definition of the contours we replace all occurrences of $x$ or $\absbig{x}$ by $\chi^{-1}(x)$ or $\chi^{-1}(\absbig{x})$. To get the desired control on the above mentioned product involving $\psi$ and $K$ it is crucial to generalize Lemma \ref{lem:psi_near_poles} in the following way. 
\begin{lem}
 Let $\varepsilon\in(0,1)$ and $k\in\N_1$ with $k > 2\varepsilon^{-1} - 2$. Let us denote $\tilde{L}_{N,\varepsilon}(s)=L_1(t)\cdot \ldots \cdot L_{N-1}(s) \cdot L_N(s)^{1+\varepsilon}$ for positive real numbers $s$. Then
 \begin{align*}
  \absbig{\psi(z)} \leq C \exp \left(-\exp \left( \frac{x^{-1}}{\tilde{L}_{N,\varepsilon}(x^{-1})} \right) \right)
 \end{align*}
 for all $z\in\C$ which can be represented as
 \begin{align*}
  z = x + i(1-y) \text{ or } z = x + i (-1+y),
 \end{align*}
 where $y\in(0,1)$ and  $\absbig{x} = \chi(y)$.
\end{lem}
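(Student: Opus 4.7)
The plan is to adapt the argument of Lemma~\ref{lem:psi_near_poles} to this iterated-exponential setting. By the symmetries $\psi(-z)=\psi(z)$ and $|\psi(\bar z)|=|\psi(z)|$, together with the boundedness of $\psi$ away from $\pm i$, matters reduce to analysing the regime $y\downarrow 0$ along the curve $z=x+i(1-y)$ with $|x|=\chi(y)$. Since $\chi(y)\ll y^{k+2}\ll y$ one still has $x=o(y)$, and the expansion $\frac{2}{1+z^2}=\frac{1}{y}(1+o(1))-i\frac{x}{y^2}(1+o(1))$ from the earlier lemma carries over verbatim. Raising to the $k$-th power gives $w_0:=(2/(1+z^2))^k=A+iB$ with $A=(1+o(1))y^{-k}$ and $|B|=k|x|y^{-k-1}(1+o(1))$; substituting the formula for $\chi$ yields the crucial relation $|B|\sim ky/\prod_{j=1}^n\exp_j(y^{-k})$, i.e.\ $|B|$ is exponentially small in $y^{-k}$.

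Next I would iterate the exponential. Writing $w_{j+1}=\exp(w_j)=e^{u_j}(\cos v_j+i\sin v_j)$ with $w_j=u_j+iv_j$, the recurrence $u_{j+1}=e^{u_j}\cos v_j$, $v_{j+1}=e^{u_j}\sin v_j$ gives $|v_{j+1}|=u_{j+1}|\tan v_j|$, whence by induction $|v_j|\lesssim|B|\cdot\prod_{i=1}^j\exp_i(y^{-k})$, using $u_i\sim\exp_i(A)\sim\exp_i(y^{-k})$. The very design of $\chi$ makes this product telescope against the denominator of $|B|$: for every $j\le n$ one obtains $|v_j|\lesssim y/\prod_{i=j+1}^n\exp_i(y^{-k})=o(1)$, so $\cos v_j$ stays close to $1$ throughout the iteration. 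Combining these observations gives $u_{n+1}=(1+o(1))\exp_{n+1}(A)$, and hence $|\psi(z)|\le c_{nk}\exp(-\tfrac12\exp_{n+1}(A))$ once $y$ is sufficiently small.

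The final and most delicate step---which I expect to be the main obstacle---is the comparison of $\exp_{n+1}(A)$ with the target $\exp(x^{-1}/\tilde{L}_{N,\varepsilon}(x^{-1}))$. For this I would peel off the logarithms of $x^{-1}=\prod_{j=1}^n\exp_j(y^{-k})/y^{k+2}$ one at a time, exploiting the well-separated scales $\exp_n(y^{-k})\gg\exp_{n-1}(y^{-k})\gg\cdots$ to verify $L_j(x^{-1})=\exp_{n-j}(y^{-k})(1+o(1))$ for each $j\le n$. Substituting these into $\tilde{L}_{N,\varepsilon}$ reduces $x^{-1}/\tilde{L}_{N,\varepsilon}(x^{-1})$ to an explicit combination of iterated exponentials of $y^{-k}$ times a power of $y$; the hypothesis $k>2\varepsilon^{-1}-2$ enters precisely here (as in the proof of Lemma~\ref{lem:psi_near_poles}) to absorb this residual polynomial factor, with the strict positive sub-leading term hidden in $A=y^{-k}+\tfrac{k}{2}y^{-k+1}+\cdots$ providing the additional slack that is required once it has been amplified by the $n$ further applications of $\exp$.
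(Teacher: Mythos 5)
Your proposal follows essentially the same route as the paper. Both arguments start from the expansion $(2/(1+z^2))^k = A+iB$ with $A=(1+o(1))y^{-k}$ and $|B|\asymp k|x|y^{-k-1}$, iterate the exponential while showing the imaginary part stays $o(1)$ at every stage (because the amplification factor $\prod_{i\le j}\exp_i(y^{-k})$ telescopes against the denominator $\prod_{j=1}^n\exp_j(y^{-k})$ built into $\chi$, leaving roughly $ky$), arrive at $|\psi(z)|\le \exp(-(1+o(1))\exp_{n+1}(y^{-k}))$, and then peel off logarithms via $L_j(x^{-1})=(1+o(1))\exp_{n-j}(y^{-k})$ to compare with $x^{-1}/\tilde L_{N,\varepsilon}(x^{-1})$.

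Two points worth noting. First, you run the iteration through the explicit recurrence $u_{j+1}=e^{u_j}\cos v_j$, $v_{j+1}=e^{u_j}\sin v_j$, whereas the paper records the same thing compactly as a linearization $\exp_N(\cdot)=\exp_N(y^{-k})+iky^{-k-1}\bigl(\prod_{j=1}^n\exp_j(y^{-k})\bigr)O(\chi(y))$; your version is slightly more careful (one does have to check $\cos v_j$ stays near $1$ at each step, which your telescoping bound on $|v_j|$ justifies), but it is the same computation. Second, for the final comparison you emphasize the slack coming from the positive subleading term of $A$ (i.e.\ $A>y^{-k}$ amplified through the iterated exponentials); the paper instead sets $\delta=2/k$ and relies on the $y^{k+2}$ factor in $\chi$, which makes $\log(x^{-1})=\sum_{j=1}^n\exp_{j-1}(y^{-k})+(k+2)\log(y^{-1})$ strictly larger than $\exp_{n-1}(y^{-k})$ and thereby gives $\tilde L_{N,\delta}(x^{-1})$ the needed extra room. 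Both slack mechanisms are available here (and for $k\ge 2$ yours in fact dominates any power of $y$), so this is a cosmetic rather than structural difference, though your attribution of the role of the hypothesis $k>2\varepsilon^{-1}-2$ to this absorption step is a little misleading: with your mechanism that hypothesis is not really what does the absorbing. Your last step is left at the level of a plan, but so is the paper's ("it is not difficult to see"), and the plan is the right one.
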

\begin{proof}
 Without loss of generality $z=x+i(1-y)$. As in the proof of Lemma \ref{lem:psi_near_poles} we get 
 \begin{align*}
  \left( \frac{2}{1+z^2} \right)^k &= y^{-k} + ik y^{-k-1} O(\chi(y)) \\
 \end{align*}
 which in turn yields
 \begin{align*}
  \exp_N \left( \left( \frac{2}{1+z^2} \right)^k \right) 
              &= \exp_N(y^{-k}) + ik y^{-k-1} \left(\prod_{j=1}^n \exp_j(y^{-k})\right) O(\chi(y)) .
 \end{align*}
 This implies
 \begin{align*}
  \absbig{\psi(z)} \leq \exp(-(1+o(1))\exp_{N+1}(y^{-N})).
 \end{align*}
 Now let $\delta=2/k$. From the definition of $\chi$ it is not difficult to see that
 \begin{align*}
  \exp_N(y^{-k}) \geq \frac{x^{-1}}{\tilde{L}_{N,\delta}(x^{-1})}.
 \end{align*}
 This finishes the proof.
\end{proof}
\end{rem}

Just for the purpose of completeness we mention that our methods also show the following generalization of a main result of \cite{BaBoTo16}. For us it is not clear how to formulate a theorem on ``$L^p$-rates of decay'' allowing for a logarithmic singularity at zero. Therefore we restrict the next theorem to the case of no singularity at zero.

\begin{thm}\label{thm:main_Lp}
 Let $f:\R_+\rightarrow X$ be a locally integrable function such that its weak derivative $f^{(m)}$ of order $m\in \N_1$ is $p$-integrable for a $p\in(1,\infty)$. Let $M, K:\R_+\rightarrow (0,\infty)$ be continuous and non-decreasing functions satisfying
 \begin{equation}\label{eq:K_vs_M_Lp}
  \exists\ep\in(0,1): K(s) = O\left(e^{e^{(sM(s))^{1-\varepsilon}}}\right),\, s\to\infty.
 \end{equation}
 Assume furthermore that $\fhat$ extends analytically to $\Omega_M\cup\C_+$, continuously to $\cl{\Omega_M}\cup\C_+$ and
 \begin{align}\nonumber
  \norm{z\fhat(z)} \leq \Chat K(\absbig{\Im z}) \text{ for all } z\in \cl{\Omega_M} .
 \end{align}
 For any $\gamma>1+p\inv$ the mapping
 \begin{align}\nonumber
  t\mapsto \MKK{K_{m,\mathrm{log},\gamma}}^{-1}(t)^m \normbig{f(t)}, t\geq \MKK{\Ktilde_{m,\gamma}}(0)
 \end{align}
 is $p$-integrable. Here $K_{m,\mathrm{log},\gamma}(s)=s^m \log(e \vee s) (M(s)\vee K(s))^\gamma, s\geq 0$.
\end{thm}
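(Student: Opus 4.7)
I would mimic the contour-integral argument of Theorem~\ref{thm:main}, replacing pointwise $L^\infty$ control of $f^{(m)}$ by $L^p$ estimates that exploit $p>1$ via the Hardy--Littlewood maximal inequality. Since the hypothesis excludes a logarithmic singularity of $\fhat$ at the origin, I can take $\ftilde\equiv 0$ throughout, so no Taylor-polynomial correction term enters. Set $g := (-1)^m f^{(m)}$, extend by $0$ for negative arguments (so $g\in L^p(\R)$), and reduce to $f^{(j)}(0)=0$ for $j=0,\ldots,m-1$ by subtracting a smooth cut-off of the Taylor polynomial, exactly as in the treatment of the case $m>1$ at the end of the proof of Theorem~\ref{thm:main}.

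Using the same contours $\gamma_1,\gamma_2,\gamma_3$, fudge factor $\psi_m$, and decomposition $f(t)=I_1(t)+I_2(t)+I_3(t)$, the $m$-fold integration by parts on $I_1+I_2$ (boundary terms at $s=t$ cancelling by symmetry of $\psi_m$, those at $s=0$ vanishing by the reduction) yields
\[
 \|I_1(t)+I_2(t)\| \le \frac{C_m}{R^m}\bigl(P_{R^{-1}}\ast\|g\|\bigr)(t),
\]
where $P_y$ is the Poisson kernel, with $\|g\|_\infty$ being replaced by the Poisson convolution throughout. The estimates on $I_3$ from the proof of Theorem~\ref{thm:main} give $\|I_3(t)\| \le C_m\Chat\log R\cdot K(R)\,e^{-t/M(R)} + C_m\Chat/R^m$ for every $R\ge R_1$.

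Now choose $R=R(t):=\MKK{K_{m,\log,\gamma}}^{-1}(t)$, which is well-defined on $[\MKK{K_{m,\log,\gamma}}(0),\infty)$ since $K_{m,\log,\gamma}(s)\ge s^m\log(e\vee s)$ forces $\MKK{K_{m,\log,\gamma}}(s)\to\infty$. The identity $t/M(R(t))=\log(e\vee K_{m,\log,\gamma}(R(t)))$, together with the lower bound $\log K_{m,\log,\gamma}(s) \ge m\log s + \log\log(e\vee s) + \gamma\log(M(s)\vee K(s))$ valid for large $s$, gives
\[
 R(t)^m\|f(t)\| \le C\bigl(P_{R(t)^{-1}}\ast\|g\|\bigr)(t) + \frac{C\Chat}{(M(R(t))\vee K(R(t)))^{\gamma-1}} + C\Chat.
\]
The Poisson term is pointwise bounded by $C\,\mathrm{MF}(\|g\|)(t)$, the Hardy--Littlewood maximal function, whose $L^p$-norm is at most $C_p\|g\|_{L^p}$ since $p>1$.

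\textbf{Main obstacle.} The crux is showing $L^p(dt)$-integrability of the decay term. After the substitution $s=R(t)$, $dt = dM_{K_{m,\log,\gamma}}(s)$, the required integral is $\int (M(s)\vee K(s))^{-p(\gamma-1)}\,dM_{K_{m,\log,\gamma}}(s)$; splitting it via the product rule $dM_{K_{m,\log,\gamma}}(s) = \log K_{m,\log,\gamma}(s)\,dM(s) + M(s)\,d\log K_{m,\log,\gamma}(s)$, the principal piece $\gamma\int (M\vee K)^{-p(\gamma-1)}\,d(M\vee K)$ converges precisely because $p(\gamma-1)>1$, while the remaining pieces are controlled by (\ref{eq:K_vs_M_Lp}), which forces $\log K_{m,\log,\gamma}(s)$ to grow at most sub-polynomially in $sM(s)$ and thereby prevents the additional logarithmic factors from overwhelming the decay $(M\vee K)^{-p(\gamma-1)}$. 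A separate refinement of the $C_m\Chat/R^m$ contribution---sharpening the $I_{31}$--$I_{37}$ estimates by exploiting the stronger decay of $\psi_m$ near $\pm i$ permitted under (\ref{eq:K_vs_M_Lp})---is needed to ensure the residual $C\Chat$ term above is itself $L^p$-integrable rather than a genuine constant.
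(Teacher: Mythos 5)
Your plan follows the same contour-integral scheme the paper sketches (same contours, same fudge factor $\psi_m$, same decomposition into $I_1,I_2,I_3$ with $\gamma_{34}$ omitted and $\delta=\lambda=0$ from the start), so the overall architecture matches. You make two genuine implementation choices that differ from the paper. First, for the Poisson-type term from $I_1+I_2$, you dominate $P_{R(t)^{-1}}\!*\|g\|(t)$ pointwise by the Hardy--Littlewood maximal function and invoke its $L^p$-boundedness; the paper instead cites the Carleson embedding theorem (via \cite[Section 4]{BaBoTo16}). These are essentially interchangeable here, and yours is a touch more elementary. Second, for the decay term $[M\vee K](R(t))^{-(\gamma-1)}$, the paper uses the direct pointwise bound $t^\beta\lesssim [M\vee K](R(t))$ for any $\beta\in(0,1)$, which immediately yields the $L^p$-integrable majorant $t^{-\beta(\gamma-1)}$ with $p\beta(\gamma-1)>1$ achievable because $\gamma>1+p^{-1}$; your change-of-variables approach $s=R(t)$ is workable in principle, but your treatment of the ``remaining pieces'' (the $m\log s+\log\log(e\vee s)$ contributions to $\log K_{m,\log,\gamma}$) is left as a hand-wave, and the paper's direct bound cleanly avoids that case analysis.

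The one genuine gap is the point you yourself flag at the end. You correctly observe that with $k$ fixed as in the proof of Theorem~\ref{thm:main}, the $I_{37}$ (and symmetric $I_{31}$) piece only gives $R^m\|I_{37}\|\le C\Chat$, a constant that is not $p$-integrable on $[\,\cdot\,,\infty)$. The fix is exactly what you hint at and what the paper carries out: choose $k$ (hence the exponent $1-\ep/2$ governing the decay of $\psi_m$ near $\pm i$) large enough, depending on a target exponent $\alpha$, so that $K(R)\exp(-\exp((RM(R))^{1-\ep/2}))$ overwhelms the upper bound $K(R)\le C_\ep\exp(\exp((RM(R))^{1-\ep}))$ from (\ref{eq:K_vs_M_Lp}) and gives $R^m\|I_{37}\|\le C[M\vee K](R)^{-\alpha}\lesssim t^{-\alpha\beta}$, which is $p$-integrable for $\alpha$ large. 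You state this refinement is ``needed'' but do not execute it, so as written the proposal does not close. Incorporating that choice of $k$, and tightening the split in the change-of-variables step (or simply replacing it by the paper's polynomial lower bound on $[M\vee K]\circ R$), would complete the argument.
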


\begin{proof}[Sketch of the proof]
 For simplicity we assume $m=1$ and we follow the pattern of the proof of Theorem \ref{thm:main}. We write shortly $\Ktilde$ for $K_{1,\mathrm{log},\gamma}$. Since there is no singularity at zero there is no need for a path $\gamma_{34}$ if we directly set $\delta=\lambda=0$. Our task is to estimate $I_1, I_2, I_{35}, \ldots ,I_{37}$ appropriately since the estimation of $I_{31}, I_{32}$ and $I_{33}$ follows the same pattern as the estimation of $I_{37}, I_{36}$ and $I_{35}$. With a foresight to the end of the proof we define $R=\MKK{\Ktilde}\inv(t)$. From (\ref{eq: estimate on I1}) and (\ref{eq: estimate on I2}) we see that 
 \begin{align*}
  \MKK{\Ktilde}\inv(t) \normbig{I_j(t)} \leq C P_{\MKK{\Ktilde}\inv(t)\inv} * \normbig{g} (t) 
  \text{ for } t\geq \MKK{\Ktilde}(0),\, j\in\{1,2\}.
 \end{align*}
 Here we emphasize the dependence of $I_j$ on $t$ since we treat it as a function depending on $t$ in the following. Thus the Carleson embedding theorem implies that the mapping $t\mapsto \MKK{\Ktilde}\inv(t) \normbig{I_j(t)}$ is $p$-integrable for $j\in\{1,2\}$. We refer to \cite[Section 4]{BaBoTo16} for more details on the Carleson measure argument which is involved here.
 
 Before we go on with the estimation of $I_{0}$ we first note that for a given $\beta\in(0,1)$ it is easy to see that  $t\mapsto t^\beta K(\MKK{\Ktilde}(t))$ is bounded from below uniformly for $t\geq \MKK{\Ktilde}(0)$ (Compare with \cite[Lemma 2.4]{St17}). The same argument which gives (\ref{eq: estimate on I36}) yields
 \begin{align*}
  \MKK{\Ktilde}\inv(t)\normbig{I_{36}(t)} \leq C R \log(R) K(R) e^{-\frac{t}{M(R)}} \leq [M\vee K](\MKK{\Ktilde}\inv(t))^{-(\gamma-1)}
 \end{align*}
 for all $t\geq t_1$, where $t_1>0$ has to be chosen sufficiently large. As in the proof of Theorem \ref{thm:main} one can absorb $\norm{I_{35}}$ by $\norm{I_{36}}$ at the cost of possibly increasing $t_0$. Since $\gamma>p\inv$ we can choose $\beta<1$ in such a way that $t\mapsto t^{-p\beta(\gamma-1)}$ is integrable on $(1,\infty)$. This shows that $t\mapsto \MKK{\Ktilde}\inv(t)\normbig{I_0(t)}$ is $p$-integrable.
 
 Finally we have to estimate $I_{37}$. From (\ref{eq: estimate on I37}) we deduce that for any $\alpha>0$ we get after choosing $k$ (see definition of $\psi$) sufficiently large (depending on $\alpha$) that
 \begin{align*}
  \MKK{\Ktilde}\inv(t) \normbig{I_{37}(t)} \leq C [M\vee K](\MKK{\Ktilde}\inv(t))^{-\alpha} \leq C t^{-\alpha\beta}.
 \end{align*}
 Chosing $\alpha$ large enough we see that also this expression is $p$-integrable, which finishes the proof.
\end{proof}

In the preprint \cite{St17} we proved a slightly weaker version of Theorem \ref{thm:main_Lp}. Instead of refining the proof presented in \cite{BaBoTo16} as done in the current article, in that article we refined the techniques of Chill and Seifert \cite{ChSe16}. Chill and Seifert essentially proved Theorem \ref{thm:main_Lp} in the special case $p=\infty$ with (\ref{eq:K_vs_M_Lp}) replaced by the stronger constraint $K(s)=O((1+s)M(s)),s\to\infty$.

In \cite{St17} we crucially use the Denjoy-Carleman theorem on quasi-analytic functions to relax this constraint to (\ref{eq:K_vs_M_Lp}). Our proof from that paper shows that we can even further relax the joint assumption on $M$ and $K$ to (\ref{eq:K_vs_M_relax}) but not to (\ref{eq:K_vs_M_Lp}) with $\ep=0$. So the rather different proofs from that paper and the current paper need (apparently) the same joint constraint on $M$ and $K$. This leads us to the question whether the condition (\ref{eq:K_vs_M_Lp}) is optimal in the sense that the theorem would be false if $1-\ep$ equal to or slightly larger than $1$ was allowed. We see some superficial connection to the Phragm\'en-Lindel\"of condition for strips, since it also involves a growth bound in terms of a function $s\mapsto e^{e^{cs}}$. We do not know if there is any deeper connection except the mere existence of a ``double exponential'' in both conditions.


\section{(Local) decay of \texorpdfstring{$C_0$}{C0}-semigroups}\label{sec:Loc_dec_C0-SGs}
The results of the preceding section can be applied to calculate local and also global decay rates for $C_0$-semigroups. To fix some of our notation, let $T=(T(t))_{t\geq0}$ be a $C_0$-semigroup on a Banach space $(X,\norm{\cdot})$ with generator $A:D(A)\rightarrow X$. We denote by
\begin{align}\nonumber
 \omega_0(T) = \inf \left\{ \omega\in\R; (t\mapsto \normbig{e^{-\omega t}T(t)}) \text{ is bounded on $\R_+$} \right\}
\end{align}
the so called \emph{growth bound} of $T$. Given $m\in\N_1, \omega>0$ and an $X$-valued analytic function $\Gtilde$, defined on some ball with radius $r\in(0,\omega)$ we define $\Gtilde_{m,\omega}(z)=(\omega-z)^{-m}\Gtilde(z)$ for $\absbig{z}<r$. By $\Gtilde_{m,\omega,j}$ we denote the Taylor polynomial of $\Gtilde_{m,\omega}$ of order $j$.

\begin{cor}[to Theorem \ref{thm:main}]\label{cor:main}
Let $T$ be a $C_0$-semigroup on a Banach space $X$ with generator $A$. Let $P_1, P_2$ be two bounded operators on $X$ and let $x\in X$. Assume that $(t\mapsto \normbig{P_2T(t)P_1x})$ is bounded. 
Let $M, K:\R_+\rightarrow (0,\infty)$ be continuous and non-decreasing functions such that for some $\ep\in(0,1)$ and $r_1,C_\ep>0$
\begin{equation}
 K(s) \leq C_\ep e^{e^{(sM(s))^{1-\varepsilon}}},\,  s\geq r_1.
\end{equation}
Let $G(z)=P_2(z-A)^{-1}P_1 x$ for $\Re z > \omega(T)$. We assume that $G$ extends analytically to $(\Omega_M\cup\C_+)\backslash\R_-$, continuously to $(\cl{\Omega_M}\cup\C_+)\backslash\R_-$ and that for some $\Chat>0$
\begin{align}\label{eq:C0_growth}
 \normbig{G(z)} \leq \Chat K(\absbig{\Im z}) \text{ for } z\in \Omega_M, \absbig{\Im z} > r_1 .
\end{align}
Assume furthermore that for some analytic function $\Gtilde:B_r\rightarrow X$ the mapping $z\mapsto G(z) - \Gtilde(z)\log(z)$ is analytic on $B_r$ for some $r\geq M(0)$. For all $m\in\N_1, \omega > \omega(T) \vee r$ there are $C_m, t_m>0$ such that for all $n\in\N_1$ and for all $t\geq t_m$
\begin{align}\label{eq:C0_rate}
 \normbig{P_2 T(t) (\omega-A)^{-m} P_1 x - \Gtilde_{m,\omega, n-1} \left( \frac{d}{dt} \right) t^{-1} } 
 \leq \sup_{-r<s<0} \frac{\normbig{ \Gtilde_{m,\omega}^{(n)}(s)} }{t^{n+1}} + \frac{C_m}{\MKtilde^{-1}(t)^m} .
\end{align}
Here, $\Ktilde=K_{m,\log}$. If in addition $K$ has positive increase of order $a\in(0,m-1]$, one can choose $\Ktilde=K_{m-a,\log}$, and if $K$ has positive increase of order $(m-1)+$, one can choose $\Ktilde=K_1$. 
\end{cor}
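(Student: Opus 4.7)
The plan is to apply Theorem~\ref{thm:main} to the locally integrable function $f(t):=P_2T(t)(\omega-A)^{-m}P_1x$ and simply translate the conclusion. Checking the three hypotheses of that theorem amounts to a short computation. Writing $A=\omega-(\omega-A)$ and expanding binomially gives
\begin{align*}
 A^m(\omega-A)^{-m}=(-1)^mI+\sum_{j=1}^{m}c_{j}\,(\omega-A)^{-j},
\end{align*}
so that $f^{(m)}(t)$ is a linear combination of $P_2T(t)P_1x$ (bounded by assumption) and of the quantities $P_2T(t)(\omega-A)^{-j}P_1x$ for $1\leq j\leq m$. Using the representation $(\omega-A)^{-j}=\frac{1}{(j-1)!}\int_0^\infty s^{j-1}e^{-\omega s}T(s)\,ds$, valid since $\omega>\omega_0(T)$, each of the latter is dominated by a constant times $\sup_t\|P_2T(t)P_1x\|$, so $f^{(m)}$ is bounded.

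Iterating the resolvent identity yields, for $\Re z>\omega_0(T)$,
\begin{align*}
 \fhat(z)=(\omega-z)^{-m}G(z)-\sum_{k=1}^{m}(\omega-z)^{-k}P_2(\omega-A)^{-(m-k+1)}P_1x,
\end{align*}
and the right-hand side extends $\fhat$ analytically to $(\Omega_M\cup\C_+)\setminus\R_-$ and continuously to the closure, because $\omega>\omega_0(T)\vee r$ keeps the apparent pole at $z=\omega$ away from the closed neighborhood of $0$ of radius $r$. Writing $G(z)=\Gtilde(z)\log z+H(z)$ with $H$ analytic on $B_r$ shows $\fhat(z)-\Gtilde_{m,\omega}(z)\log z$ is analytic on $B_r$, so $\Gtilde_{m,\omega}$ plays the role of $\ftilde$ in Theorem~\ref{thm:main}, and its Taylor polynomial is exactly $\Gtilde_{m,\omega,n-1}$. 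For the growth bound, on $\Omega_M$ with $|\Im z|>r_1$ we have $|\omega-z|\geq|\Im z|$ and $|z|\leq(1+1/(r_1M(r_1)))|\Im z|$, whence $|z|/|\omega-z|^m\lesssim|\Im z|^{1-m}\leq 1$ and $\|z\fhat(z)\|\leq\Chat'K(|\Im z|)$ for a suitable $\Chat'$. A direct application of Theorem~\ref{thm:main} yields~(\ref{eq:C0_rate}) with $\Ktilde=K_{m,\log}$.

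For the refined rates, note that we have in fact proved the sharper estimate $\|z\fhat(z)\|\lesssim K(|\Im z|)|\Im z|^{1-m}$. Assuming positive increase of index $a\in(0,m-1]$, pass to a non-decreasing envelope $\bar K$ comparable to $K(s)/s^a$ (for instance $\bar K(s):=\sup_{u\leq s}K(u)/(1\vee u)^a$, which by~(\ref{eq:pos_inc}) is equivalent to $K(s)/s^a$ up to the constant $C_a$). The growth bound then reads $\|z\fhat(z)\|\lesssim\bar K(|\Im z|)|\Im z|^{a+1-m}\leq\bar K(|\Im z|)$ since $a\leq m-1$, and Theorem~\ref{thm:main} applied to $\bar K$ produces $\Ktilde=\bar K_{m,\log}\sim K_{m-a,\log}$. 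When the positive increase index exceeds $m-1$, taking $a=m-1$ in the construction makes $\bar K$ itself have positive increase of strictly positive order, so the \emph{positive increase} clause of Theorem~\ref{thm:main} is available and drops the logarithm, yielding $\Ktilde=\bar K_m\sim K_1$.

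The main obstacle I anticipate is the bookkeeping in the positive-increase step: one has to verify that $\bar K$ is genuinely two-sided comparable to $K(s)/s^a$ on the whole relevant range (absorbing the constant $C_a$), that the resulting $\Ktilde$ produced by Theorem~\ref{thm:main} reproduces $K_{m-a,\log}$ or $K_1$ up to multiplicative constants that fold into $C_m$, and that the regime $|\Im z|\leq r_1$ contributes only a bounded term that can be absorbed into $\Chat'$. Everything else reduces to algebra with the resolvent identity plus a direct appeal to the main theorem.
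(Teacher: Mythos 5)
Your proof is correct and follows essentially the same route as the paper's own proof: define $f(t)=P_2T(t)(\omega-A)^{-m}P_1x$, expand $\fhat$ via the iterated resolvent identity into a $(\omega-z)^{-m}G(z)$ piece plus polynomially decaying remainders, identify $\ftilde=\Gtilde_{m,\omega}$, verify the growth bound $\|z\fhat(z)\|\lesssim K(|\Im z|)$ on $\Omega_M$, and appeal to Theorem~\ref{thm:main}, using the extra $|z|^{-a}$ gain from positive increase to pass to a smaller effective $K$. If anything you are slightly more careful than the paper on two minor points: you explicitly construct the non-decreasing envelope $\bar K(s)=\sup_{u\leq s}K(u)/(1\vee u)^a$ so that Theorem~\ref{thm:main} (which requires $K$ non-decreasing) genuinely applies, a step the paper leaves implicit; and your sign in the partial-fraction expansion of $\fhat$ is the correct one, whereas the corresponding display in the paper's proof carries a sign typo that is harmless for the norm estimates.
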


Our main interest in applying this theorem is to consider the case where $P_1$ and $P_2$ are not the identity. We think that a typical situation is that $M$ is a slowly increasing function (possibly constant) and $K$ is a (possibly much) faster increasing function. That is, we assume that the perturbed resolvent extends to a relatively large domain to the left of the imaginary axis, but may grow very quickly.

\begin{proof}[Proof of Corollary \ref{cor:main}]
 For $t\geq 0$ let us define $f(t)=P_2 T(t) (\omega-A)^{-m} P_1 x$. Then we have for $t>0$ and for $z\in\Omega_M$
 \begin{align}\label{eq:C0thm_fm}
  f^{(m)}(t) = P_2 T(t) [\omega(\omega-A)^{-1} - 1]^m P_1 x \text{ and } \\ \label{eq:C0thm_fhat}
  \fhat(z) = \sum_{j=0}^{m-1} (\omega-z)^{-(j+1)}P_2(\omega-A)^{-(m-j)}P_1 x + (\omega-z)^{-m}G(z) .
 \end{align}
 Using the fact that $(\omega-A)\inv$ is the (absolutely convergent) Laplace transform of $T$, it is not difficult to prove
 \begin{align*}
  \normbig{P_2T(t)(\omega-A)^{-j}P_1x} \leq \omega^{-j}  \sup_{s\geq t} \normbig{P_2T(s)P_1x},\, t\geq 0
 \end{align*}
 by induction on $j\in\N$. Together with (\ref{eq:C0thm_fm}) this shows that $f^{(m)}$ is bounded. Clearly, for $\absbig{z}\leq r$ we have $\ftilde(z)=(\omega-z)^{-m}\Gtilde(z)$ in the terminology of Theorem \ref{thm:main}. Moreover, by (\ref{eq:C0thm_fhat}) together with (\ref{eq:C0_growth}) for $z\in\Omega_M$ with $\absbig{\Im z}\geq r_1$ we have
 \begin{align}\label{eq:C0_crude_estimate}
  \normbig{\fhat(z)} \leq \frac{C}{\absbig{\omega-z}} + \absbig{\omega-z}^{-m} K(\absbig{\Im z}) \leq C\absbig{z}\inv K(\absbig{\Im z}) .
 \end{align}
 If $K$ has positive increase of index $a\in(0,m-1]$ we can improve (\ref{eq:C0_crude_estimate}) to
 \begin{align*}
  \normbig{\fhat(z)} \leq C\abs{z}^{-1-a} K(\absbig{\Im z}) 
 \end{align*}
 for all $z\in\Omega_M$ with large modulus. Note also, that if $K$ has positive increase of order $a+$, then $s\mapsto (1 + s)^{-a} K(s)$ has positive increase. Now the conclusion follows from Theorem \ref{thm:main}.
\end{proof}

\begin{rem}\label{rem:Ctilde_in_main_C0_local}
 Inspecting the proof and using Remark \ref{rem:Ctilde_in_main} we can find out the dependence of $t_m$ and $C_m$ on the hypotheses in Corollary \ref{cor:main_C0}. Therefore let us define
 \begin{align*}
  \Chat_0 &= \sup\{\norm{zG(z)}; z\in\Omega_M\backslash\R_-, \abs{\Im z}<r_1\}, \\
  \Ctilde &= \sup\{ \norm{P_2 T(t) P_1 x}; t\geq 0 \} ,\\
  C_A &= \sup\{ \norm{P_2 (\omega-A)^{-j} P_1 x}; j = 0, \ldots, m \}
 \end{align*}
 Observe that with the notation of the proof $\norm{f^{(m)}}\leq 2^m \Ctilde$. In the following $C'\geq 1$ (respectively $C'_0\geq 1$) describes a constant which can be seen as a non-decreasing function in the parameters $1/M(r_1)$, $C_A/\Chat K(r_1)$, $\omega, \omega\inv$ (respectively $1/M(r_1)$, $C_A/\Chat_0$, $\omega, \omega\inv$). Now $t_m$ can be chosen to be a non-decreasing function in $m,1/K,M(r_1),\ep\inv,r_1,r_1\inv,C'_0\Chat_0/C'\Chat$. With this choice of $t_m$ the constant $C_m$ can be chosen to be of the form
 \begin{align*}
  C_m=C'_m(\Ctilde+C'\Chat) + C''_m (\norm{\Gtilde_{m,\omega}}_{(-r,0)}+\norm{\Gtilde_{m,\omega}}_n)
 \end{align*}
  with $C'_m>0$ (respectively $C''_m$) being a non-decreasing function depending on $m,r_1\inv,\ep\inv,C_\ep$ from (\ref{eq:K_vs_M}) (respectively $m,n,\ep\inv,1/M(r_1),1/K(r_1)$). If the information about positive increase is used, $t_m$ also depends on $s_a$ and $C'_m$ also depends on $a\inv, C_a$ from (\ref{eq:pos_inc}).
\end{rem}

Let us specialize this result to the case $P_1=P_2=1$ and $T$ being a \emph{bounded} $C_0$-semigroup. Note that a logarithmic type singularity cannot occur in this setting - in other words $\Gtilde=0$. To simplify the presentation we restrict to the case $m=1$.

\begin{cor}[to Theorem \ref{thm:main}]\label{cor:main_C0_strip}
 Let $M,K:\R_+\to(0,\infty)$ be continuous non-decreasing functions such that there exist $\ep\in(0,1)$ and $C_\ep,r_1>0$ with
 \begin{equation}\nonumber
   K(s) \leq C_\ep e^{e^{(sM(s))^{1-\varepsilon}}} ,\,  s\geq r_1.
 \end{equation}
 Let $A$ be the generator of a bounded $C_0$-semigroup $T$, which satisfies $\Omega_M\subset\sigma(A)$ and $\normbig{(z-A)\inv}\leq \Chat K(\abs{\Im z}), z\in\Omega_M$ for a constant $\Chat>0$. There exist constants $C'_1,C',t_1>0$, solely depending on $M$ and $K$  such that
 \begin{equation}\nonumber
  \normbig{T(t)A\inv} \leq \frac{C'_1}{\MKtilde\inv(t)} \left( \sup_{\tau\geq0}\norm{T(\tau)} + C'\Chat \right)
 \end{equation}
 for all $t\geq t_1$. Here $\Ktilde=K_{1,\log}$, and if $K$ has positive increase one can also choose $\Ktilde=K_1$. Moreover, $t_1$ (respectively $C'$ and $C'_1$) can be chosen to be a non-decreasing function with respect to $1/K$, $K(r_1)$, $M(r_1)$, $1/M(r_1)$, $\ep\inv$, $r_1$, $r_1\inv$ (respectively $1/M(r_1)$, $\norm{A\inv}/\Chat K(r_1)$ and $r_1\inv,\ep\inv,C_\ep$). If information about positive increase of $K$ is used $t_1$ also depends on $s_a$ and $C'_1$ also depends on $a\inv, C_a$ from (\ref{eq:pos_inc}).
\end{cor}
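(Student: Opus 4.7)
The plan is to reduce the corollary to one application of Theorem \ref{thm:main} with $m=1$ and $\ftilde\equiv 0$, applied to $f(t)=T(t)A\inv x$ for an arbitrary $x\in X$ with $\norm{x}\le 1$, and then take the supremum in $x$. Since by hypothesis $0\in\Omega_M$ lies in the resolvent set of $A$, the operator $A\inv$ exists. Then $f'=T(\cdot)x$ is bounded with $\norm{f'}_\infty\le\sup_\tau\norm{T(\tau)}$, the Laplace transform equals $\fhat(z)=(z-A)\inv A\inv x$ on $\C_+$ and, because $(z-A)\inv$ is analytic on all of $\Omega_M$ (in particular at the origin), it extends analytically to $(\Omega_M\cup\C_+)\setminus\R_-$ and continuously up to the boundary. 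In particular, there is no logarithmic singularity at $0$, so the hypothesis of Theorem \ref{thm:main} on $\ftilde$ is trivially met with $\ftilde\equiv 0$ on any small ball $B_r$.

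Next I would verify the growth hypothesis (\ref{eq:fhat_at_zero}). The resolvent identity $z(z-A)\inv=I+A(z-A)\inv$ gives $z\fhat(z)=A\inv x+(z-A)\inv x$, so on $\Omega_M\cap\{\abs{\Im z}>r_1\}$ one obtains
\[
 \norm{z\fhat(z)}\le \norm{A\inv}+\Chat K(\abs{\Im z})\le C\,\Chat K(\abs{\Im z}),
\]
with $C$ depending only on $\norm{A\inv}/(\Chat K(r_1))$, which is precisely the ratio entering the Corollary's list of parameters for $C'$. On the complementary bounded part $\Omega_M\cap\{\abs{\Im z}\le r_1\}$, which has compact closure, continuity of the resolvent yields a uniform bound $\Chat_0$ whose dependence is controlled by $M(r_1)$, $r_1$, $\norm{A\inv}$ and $\Chat$.

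With these inputs in place, Theorem \ref{thm:main} together with Remark \ref{rem:Ctilde_in_main} (applied with $n=1$; every $\ftilde$-dependent quantity on the right-hand side of (\ref{eq:decay_rate}) vanishes) yields
\[
 \norm{T(t)A\inv x}\le \frac{C'_1}{\MKtilde\inv(t)}\bigl(\norm{x}\sup_\tau\norm{T(\tau)}+C'\Chat\norm{x}\bigr),\qquad t\ge t_1,
\]
with $t_1,C',C'_1$ non-decreasing in the parameters listed in the Corollary. Taking the supremum over $\norm{x}\le 1$ produces the operator-norm inequality claimed. The positive-increase refinement $\Ktilde=K_1$ is inherited verbatim from the corresponding clause of Theorem \ref{thm:main}.

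The only non-routine step is bookkeeping: one has to match Remark \ref{rem:Ctilde_in_main}'s parameter list to that of the Corollary. Here $\norm{A\inv}$ plays the role of $\norm{f}_m=\norm{f(0)}=\norm{A\inv x}\le\norm{A\inv}$, so $\norm{A\inv}/(\Chat K(r_1))$ replaces $\norm{f}_m/(\Chat K(r_1))$, and the resolvent's supremum on the bounded part of $\Omega_M$ replaces $\Chat_0$. Beyond this dependency tracking there is no new analytic content, and no further obstacle.
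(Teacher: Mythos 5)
Your argument is correct and follows essentially the same route as the paper: you set $f(t)=T(t)A^{-1}x$, derive $z\fhat(z)=A^{-1}x+(z-A)^{-1}x$ from the resolvent identity, note $\ftilde\equiv 0$, and invoke Theorem \ref{thm:main} with Remark \ref{rem:Ctilde_in_main}. The only cosmetic difference is that the paper bounds $\norm{A^{-1}}\leq\Chat K(0)$ so that $\norm{z\fhat(z)}\leq 2\Chat K(\abs{\Im z})\norm{x}$ holds uniformly on all of $\Omega_M$ (which immediately supplies $\Chat_0\leq 2\Chat K(r_1)\norm{x}$), whereas you invoke compactness near the origin; either way the tracked dependencies coincide.
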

\begin{proof}
  For a given $x\in X$ let us define $f_x(t)=T(t)A\inv x,t\geq0$. Then $\norm{f_x'}_{\infty}\leq \Ctilde\norm{x}$ and by the resolvent identity
 \begin{align*}
  \norm{z \fhat_x(z)} \leq \Chat \left( K(0) + K(\abs{\Im z}) \right) \norm{x} \leq 2\Chat K(\abs{\Im z}) \norm{x}
 \end{align*}
 for all $z\in\Omega_{M}$. The conclusion follows from Theorem \ref{thm:main} and Remark \ref{rem:Ctilde_in_main}. To see the dependence of $t_1$ on the hypothesis, note that, in the terminology of Remark \ref{rem:Ctilde_in_main}, one can choose $C'_0=\Chat K(r_1)\norm{x}$. Moreover, $C'\geq 1$ and $\norm{f}_1\leq \norm{A\inv}\norm{x}\leq \Chat K(r_1)\norm{x}$. This implies that the expression $C'_0\Chat_0/C'\Chat$, on which $t_1$ depends, can be estimated from above by a non-decreasing function, solely depending on $K(r_1)$ and $1/M(r_1)$.
\end{proof}

Sometimes it is easier to estimate the resolvent only along the imaginary axis, and not on a domain to the left of the imaginary axis. Below we formulate a version of Corollary \ref{cor:main_C0_strip} which takes this into account. Such a Theorem is possible since the resolvent identity extends the estimate on the imaginary axis to a certain domain to the left of the imaginary axis. However, if for some reason, one already knows a resolvent bound on a ``large'' domain to the left of the imaginary axis it is advisable to prefer Corollary \ref{cor:main_C0_strip} over Corollary \ref{cor:main_C0}. 

To see why, let us consider the bounded (even contractive) semigroup $T$ generated by $A-1$, where $A$ is as in \cite[Example 5.1.10]{ArBaHiNe2011}. This semigroup is not uniformly exponentially stable but $\norm{T(t)A\inv}\leq C e^{-t/2},t\geq0$ which is optimal with respect to the exponent $1/2$. Moreover, the spectrum of $A$ consists of the points $\{-1+i2^n; n\in\N\}$. It is not difficult to see that for each $\delta\in(0,1)$ the resolvent is bounded by $C(1+s)^{\alpha_\delta}$ on the strip $\Omega_{(1-\delta)\inv}$, where $\alpha_\delta=\log(\delta\inv)/\log(2)$. Taking $\delta\in(1/2,1)$ in an optimal way, Corollary \ref{cor:main_C0_strip} implies an exponential decay rate of for a certain exponent $c\in(1/4,1/2)$. That is, we recover the correct exponential decay rate up to a loss in the choice of $c$. On the other hand one can show that the optimal resolvent bound along the imaginary axis is $\norm{(is-A)\inv}\leq C\log(e\vee s),s\in\R$. Thus Corollary \ref{cor:main_C0} implies a decay rate of the form $Ce^{-\sqrt{ct}}$ -  which  is far from being optimal.

\begin{cor}[to Corollary \ref{cor:main_C0_strip}]\label{cor:main_C0}
 Let $M:\R_+\to(0,\infty)$ be a continuous non-decreasing function and $A$ the generator of a bounded $C_0$-semigroup $T$, which satisfies $i\R\subset\sigma(A)$ and $\normbig{(is-A)\inv}\leq M(\abs{s}), s\in\R$. For any $c\in(0,1)$ there exist constants $C'_1,C',t_1>0$, solely depending on $M$ such that
 \begin{equation}\label{eq:C0_decay}
  \normbig{T(t)A\inv} \leq \frac{C'_1}{\Mlog\inv(ct)} \left( \sup_{\tau\geq0}\norm{T(\tau)} + C'(1-c)\inv \right)
 \end{equation}
 for all $t\geq t_1$. Here $\Mlog(s)=M(s)\log(e \vee s M(s)), s\geq0$. The constant $t_1$ (respectively $C'$) can be chosen to be a non-decreasing function with respect to $1/M,M(1)$ (respectively $1/M(1)$). Moreover $C'_1$ can be chosen to be a purely numerical constant - i.e. not depending on $M$ or the semigroup at all.
\end{cor}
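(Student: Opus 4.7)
The plan is to deduce this from Corollary~\ref{cor:main_C0_strip} by extending the resolvent bound from the imaginary axis to a suitable strip via a Neumann series. Given $c\in(0,1)$, fix $\delta=(1+c)/2\in(c,1)$. For $z=-x+is$ with $0\leq x<\delta/M(\abs{s})$, the factorization $z-A=(is-A)(I-x(is-A)\inv)$ together with $\norm{x(is-A)\inv}\leq xM(\abs{s})<\delta<1$ gives $\Omega_{M/\delta}\subset\rho(A)$ and $\norm{(z-A)\inv}\leq M(\abs{\Im z})/(1-\delta)=2M(\abs{\Im z})/(1-c)$ on $\Omega_{M/\delta}$. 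Thus Corollary~\ref{cor:main_C0_strip} applies with $\tilde M=M/\delta$, $K=M$, $\Chat=2/(1-c)$; the growth condition $K(s)\leq C_\ep e^{e^{(s\tilde M(s))^{1-\ep}}}$ holds for all $s$ beyond some $r_1$ depending on $M$ with $\ep=1/2$ and a universal $C_\ep$, since $sM(s)\to\infty$ makes the double exponential eventually dominate $M(s)$.

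Corollary~\ref{cor:main_C0_strip} then yields
\begin{equation*}
 \norm{T(t)A\inv}\leq \frac{C'_1}{\MKtilde\inv(t)}\Bigl(\sup_{\tau\geq0}\norm{T(\tau)}+\frac{2C'}{1-c}\Bigr),\quad t\geq t_1,
\end{equation*}
with $\MKtilde(s)=\delta\inv M(s)\log(e\vee s\log(e\vee s)M(s))$. To recognize $\Mlog\inv(ct)$ in the denominator I would show that $\MKtilde(s)\leq c\inv\Mlog(s)$ for all sufficiently large $s$, which is equivalent to
\begin{equation*}
 \delta\inv\bigl[\log(sM(s))+\log\log(e\vee s)+O(1)\bigr]\leq c\inv\log(sM(s));
\end{equation*}
this holds because $\delta>c$ and, using $M(s)\geq M(0)>0$, $\log\log(e\vee s)=o(\log(sM(s)))$. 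Monotonicity then gives $\MKtilde\inv(t)\geq \Mlog\inv(ct)$ for $t$ beyond a (possibly enlarged) threshold $t_1$, producing the required bound. Note that the logarithmic loss in $\MKtilde$ is precisely what forces $\delta>c$ rather than $\delta=c$, and is the sole reason why an arbitrary $c<1$ (rather than $c=1$) appears.

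The genuinely delicate point is the claim that $C'_1$ can be chosen purely numerical. Corollary~\ref{cor:main_C0_strip} only produces $C'_1$ as a non-decreasing function of $1/\tilde M(r_1),\ \norm{A\inv}/(\Chat K(r_1)),\ r_1\inv,\ \ep\inv,\ C_\ep$. Among these, $\norm{A\inv}/(\Chat K(r_1))\leq (1-\delta)M(0)/M(r_1)\leq (1-c)/2\leq 1/2$ is universally bounded, and $\ep,C_\ep$ are fixed universally. To eliminate the residual dependence on $1/\tilde M(r_1)$ and $r_1\inv$, the plan is to exploit the scaling $A\rightsquigarrow\mu A$, $T(t)\rightsquigarrow T(\mu t)$: the new resolvent bound $M_\mu(s)=\mu\inv M(s/\mu)$ satisfies $(M_\mu)_{\mathrm{log}}\inv(ct)=\mu\Mlog\inv(\mu ct)$, so the target estimate is scale-invariant, and $\mu$ can be chosen so that $M_\mu$ is normalized at a fixed reference point (e.g.\ with $r_1^\mu=1$ and $M_\mu(1)$ of order one). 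After this normalization $C'_1$ reduces to a purely numerical constant, while all $M$-dependence migrates into $t_1$ and $C'$, as required by the statement. This scaling step is the hard part, since one must verify that the choice of $\mu$ is possible for arbitrary continuous non-decreasing $M$; the bounded case (exponential decay) is easily treated separately.
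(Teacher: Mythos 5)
Your core route --- extend the resolvent bound from the imaginary axis to a strip $\Omega_{M/\delta}$ via the resolvent identity (your Neumann series factorization), apply Corollary~\ref{cor:main_C0_strip} with $\tilde M=M/\delta$, $K=M$, and then compare $\MKK{K_{1,\log}}$ with $c^{-1}\Mlog$ --- is exactly the paper's proof. Your computation showing $\tilde M(s)\log(e\vee s\log(e\vee s)M(s))\leq c^{-1}\Mlog(s)$ for large $s$ (using $\delta>c$ and $\log\log s = o(\log(sM(s)))$) makes explicit what the paper leaves implicit, and it is correct; absorbing the threshold into $t_1$ is also fine, since the statement allows $t_1$ to depend on $M$.

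However, the final step of your argument rests on a misreading of Corollary~\ref{cor:main_C0_strip}. You write that that corollary produces $C'_1$ as a non-decreasing function of ``$1/\tilde M(r_1),\ \norm{A\inv}/(\Chat K(r_1)),\ r_1\inv,\ \ep\inv,\ C_\ep$.'' That is not what it says: the parenthetical ``respectively'' structure there splits the lists so that $1/M(r_1)$ and $\norm{A\inv}/(\Chat K(r_1))$ govern $C'$, while $C'_1$ is non-decreasing in $r_1\inv,\ \ep\inv,\ C_\ep$ \emph{only}. Once you fix $r_1=1$, $\ep=1/2$, $C_\ep=1$ (which is legitimate: the inequality $M(s)\leq e^{e^{(s\theta^{-1}M(s))^{1/2}}}$ holds for every $s\geq1$ and every $M>0$, so no $M$-dependent $r_1$ is needed), $C'_1$ is automatically a purely numerical constant. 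Consequently, the scaling argument $A\rightsquigarrow\mu A$ that you flag as ``the hard part'' is entirely unnecessary; the paper does not use it, and you should delete it. The scaling identity $(M_\mu)_{\mathrm{log}}\inv(ct)=\mu\Mlog\inv(\mu ct)$ you wrote is actually correct, but it is solving a problem you do not have, and it would needlessly force you to handle, e.g., bounded versus unbounded $M$ separately.
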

\begin{rem}
 We emphasize the \emph{linear} dependence of the decay rate with respect to the semigroup constant $\sup_{\tau\geq0}\norm{T(\tau)}$. With a scaling argument one can use this to deduce a generalization of Theorem \ref{cor:main_C0} to \emph{unbounded} $C_0$-semigroups. We refer to Rozendaal and Veraar \cite[Section 4.7]{RoVe17} for more details on this interesting argument.
\end{rem}
\begin{rem}\label{rem:c_t}
 Since $t_1$ does not depend on the value of $c$ one can choose $c=c_t$ to be a function in $t$. For some $M$ one can easily find an ``optimal'' rate of convergence of $c_t\to1,t\to\infty$ which even further improves the decay rate. We refer the reader to the end of this section where we show the effect of this argument for a simple example. 
\end{rem}
\begin{proof}[Proof of Corollary \ref{cor:main_C0}]
 Given $\theta\in(0,1)$, from $\normbig{(is-A)\inv}\leq M(\absbig{s}), s\in\R$ and the resolvent identity we may deduce that the resolvent extends analytically to the domain $\Omega_{\theta\inv M}$ and satisfies $\normbig{(z-A)\inv}\leq (1-\theta)\inv M(\absbig{\Im z}), z\in\cl{\Omega_{\theta\inv M}}$. By choosing $\theta\in(c,1)$ only slightly larger than $c$ the conclusion follows from Corollary \ref{cor:main_C0_strip} by taking (randomly) $\ep=1/2$, $r_1=1$ and $C_\ep=1$.
\end{proof}

Let us compare the quality of the decay rate obtained in Corollary \ref{cor:main_C0} with known results from the literature. To simplify our considerations we assume $\lim_{s\rightarrow\infty}M(s)$ $=\infty$ in the following. Our result is sharper than the result obtained in \cite{ChSe16}. There Chill and Seifert showed (\ref{eq:C0_decay}) only for $c\in(0,1/2)$. We are not aware of any other result in the Banach space setting determining admissible values of $c$ explicitly. So - in this generality, and with respect to the obtained decay rate - it seems that our result is the sharpest available in the literature.

If $T$ acts on a \emph{Hilbert space} we are in the range of applicability of \cite[Theorem 4.1]{RoSeSt17}. Without using any specific information about $M$ the cited theorem already implies 
\begin{align}\label{eq:decay_RoSeSt}
 \norm{T(t)A\inv}=O(\Mlog\inv(ct)\inv),t\to\infty
\end{align}
for any $c\in(0,1)$ and $\Mlog$ replaced by the function defined by $M(s)\log(e\vee s), s\geq0$. The reason for this is that any non-decreasing function has, in the terminology of \cite{RoSeSt17}, \emph{quasi-positive increase} with auxiliary function $s\mapsto\log(e\vee s),s\geq0$ and constant $c$ from \cite[equation (4.1)]{RoSeSt17} being equal to $e\inv$ (see also \cite[Remark 4.2 (b)]{RoSeSt17}). A typical situation where the concrete value of $c$ (in \ref{eq:C0_decay}) matters is when $M$ grows at a sub-polynomial rate. In this case the conclusion of Corollary \ref{cor:main_C0} coincides with the mentioned application of \cite[Theorem 4.1]{RoSeSt17}. On the other hand, for concrete functions $M$ one can find ``optimal'' auxiliary functions which improve the decay rate even further. For example, let $\alpha>0$ and $M(s)=\log(e\vee s)^{\alpha},s\geq 0$. Then the largest open interval of admissible values $c$ for which (\ref{eq:decay_RoSeSt}) is true is $(0,c_\alpha)$ with $c_\alpha=\alpha^{-\alpha}(1+\alpha)^{1+\alpha}$ \cite[Example 4.3 and 4.5]{RoSeSt17}. The consequences of this are two-fold. On the one hand this shows that in the Hilbert space setting, with respect to the rates, the results obtained in \cite{RoSeSt17} are superior to Corollary \ref{cor:main_C0}, and for certain choices of $M$ the rate obtained in \cite{RoSeSt17} is significantly faster (but note that the dependence of certain constants was not discussed in \cite{RoSeSt17}). On the other hand - in this generality - the constant $c$ in (\ref{eq:C0_decay}) can not be chosen larger than $1$ since $\lim_{\alpha\to0} c_\alpha = 1$. Thus, Corollary (\ref{cor:main_C0}) is (almost) optimal with respect to the admissible values of $c$ - up to the question whether $c=1$ would be admissible too.

There is another (rather artificial) example which shows that in general (\ref{eq:decay_RoSeSt}) is not true for $c>1$. Let $\delta>0$ and consider a normal operator $A$ on a Hilbert space with spectrum $\sigma(A)=\{-\delta+is; s\in\R\}$. This operator is the generator of a semigroup $T$ satisfying $\norm{T(t)A\inv}=\delta\inv e^{-\delta t}$. Actually the semigroup is \emph{uniformly} exponentially stable, but this is not important for our purpose. Since $\norm{(is-A)\inv}=\delta\inv, s\in\R$ we see that Corollary \ref{cor:main_C0} implies a decay rate of the form $e^{-c\delta t}$ for any $c\in(0,1)$. That is, we recover the original decay rate up to an arbitrary small exponential loss. Actually, using the idea of Remark \ref{rem:c_t} with $c_t=1-(\delta t)\inv$ we can reduce the loss to be of linear order. We further explore the optimality of Theorem \ref{thm:main} and its corollaries in Section \ref{sec:optimality}.


\section{Optimality of Theorem \ref{thm:main} and its corollaries}\label{sec:optimality}
In this section we show that Theorem \ref{thm:main} is essentially optimal in the sense that for many possible choices of $M$ and $K$ it is not possible to replace $\MKtilde$ in (\ref{eq:decay_rate}) by $c\inv\MKtilde$ for a $c\in(1,\infty)$ without invalidating the Theorem in general (see Theorem \ref{thm:optimality} below). To show this we use almost the same method as in \cite{BoTo10} but improve it on a technical level. There the authors showed the optimality of the decay rate (\ref{eq:decay_rate}) up to a constant in the very particular case that $M(s)=C(1\vee s^{\alpha})$ and $K(s)=C(1\vee s^\beta)$ for $\beta>\alpha/2>0$. To compare our result with Borichev's and Tomilov's result (\cite[Theorem 3.8]{BoTo10}) take into account Remark 3.10 from their paper. Also note that while we are considering decay to zero of a function $f$, Borichev and Timolov considered the decay to zero of $\fhat(0)-\int_0^t f(s)ds$. This explains the differences in the hypothesis of our results compared to their results but it is possible to translate our results to their setting and reversely. Theorem \ref{thm:optimality_C0} shows the optimality of Corollary \ref{cor:main_C0_strip} under the same assumptions on $M$ and $K$ as in Theorem \ref{thm:optimality}. To simplify the presentation we restrict it to the case $m=1$ and refer to Remark \ref{rem:m_larger_1} for the adjustments to be made if $m>1$.

Let $M,K:\R_+\to(0,\infty)$ be continuous non-decreasing functions. We frequently impose some of the following assumptions on such function.
\begin{enumerate}
 \item[(H1)] There exists a continuous non-decreasing function $N:\R_+\to[1,\infty)$ and a real number $s_0\geq0$ such that $M(s+s')\leq N(s')M(s)$ for all $s\geq s_0$, $s'\geq 0$.
 \item[(H1')] Same as (H1) but additionally requiring $N(0)=1$.
 \item[(H2)] The function $s\mapsto M(s)^{-1/2}K(s), s\geq0$ has positive increase.
\end{enumerate}
In the following, whenever we refer to a certain function called $N$, we mean the function occurring in (H1) and (H1'). Given $\alpha>0$ all functions which are defined by $\log(e\vee s)^\alpha, 1\vee s^\alpha, e^{\alpha s}$ satisfy (H1'). Moreover, the class of functions satisfying (H1) (respectively (H1')) is closed under multiplication and under taking positive real powers. Given $\alpha\in[0,\infty)$ the function $s\mapsto e^{s^{\alpha}}, s\geq0$ satisfies (H1') if and only if $\alpha\in[0,1]$.  If $M$ is bounded (H1) is satisfied for any $N$ given by a constant which is strictly larger than $1$. If $M$ is constant (H1') is satisfied with $N=1$.

\begin{prp}\label{prp:optimality}
 Let $\Ktilde,M:\R_+\rightarrow(0,\infty)$ be continuous and non-decreasing functions. Assume that $\Ktilde$ has positive increase and that $M$ satisfies (H1).
 For any real number $\gamma > N(0)$ there exist constants $c,C>0$ and a locally integrable function $f:\R_+\rightarrow\C$ with bounded weak derivative such that $\fhat$ extends analytically to $\Omega_M\cup\C_+$, extends continuously to $\cl{\Omega_M}\cup\C_+$, satisfies
 \begin{align}\label{eq:fhat_optprp}
  \abs{\fhat(z)} \leq C \frac{M(\absbig{\Im z})^\frac{1}{2}}{1+\absbig{\Im z}} \Ktilde(\absbig{\Im z})^{\gamma} \text{ for all } z\in\Omega_M
 \end{align}
 and fulfills
 \begin{align}\label{eq: optimality decay rate}
  \limsup_{t\rightarrow\infty} \MKtilde^{-1}(t)\absbig{f(t)} > c.
 \end{align}
 Moreover, one can choose $f$ in such a way that $\fhat$ extends to a strip to the left of the imaginary axis.
\end{prp}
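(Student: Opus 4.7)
The plan is to adapt the Borichev--Tomilov resonant construction from \cite{BoTo10} to the general pair $(M,\Ktilde)$. Fix a non-negative bump $\phi\in C_c^\infty(\R)$ with $\spt\phi\subset[-1/2,1/2]$ and $\phi(0)=1$, and let $\lambda_n\to\infty$ be a very rapidly increasing sequence (to be specified). Set $t_n=\MKtilde(\lambda_n)$ and $T_n=c_0 M(\lambda_n)^{1/2}\Ktilde(\lambda_n)^{\gamma-1}$ for a small absolute constant $c_0>0$, and define
\begin{equation*}
 f(t)=\sum_{n\geq 1}\frac{1}{\lambda_n}\,e^{i\lambda_n t}\,\phi\!\left(\frac{t-t_n}{T_n}\right),\quad t\geq 0.
\end{equation*}
The sparsity of $\{\lambda_n\}$ will be chosen so large that the time-intervals $[t_n-T_n/2,\,t_n+T_n/2]$ are pairwise disjoint and $\lambda_nT_n\to\infty$; both conditions are achievable since for fixed $\lambda_n$ one can make $\lambda_{n+1}$ as large as needed to force $\MKtilde(\lambda_{n+1})\geq t_n+T_n+T_{n+1}$. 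Disjointness of supports together with the estimate $\|\phi'\|_\infty/(\lambda_nT_n)\leq 1$ gives boundedness of $f'$, and at $t=t_n$ only the $n$-th summand contributes, yielding $\absbig{f(t_n)}=1/\lambda_n=1/\MKtilde\inv(t_n)$, which immediately gives (\ref{eq: optimality decay rate}).

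For the Laplace transform I would compute termwise
\begin{equation*}
 \fhat(z)=\sum_{n\geq 1}\frac{T_n}{\lambda_n}\,e^{(i\lambda_n-z)t_n}\,\hat\phi(T_n(z-i\lambda_n)),
\end{equation*}
where $\hat\phi$ is entire with Paley--Wiener bounds $\absbig{\hat\phi(w)}\leq C_N e^{\absbig{\Re w}/2}(1+\absbig{\Im w})^{-N}$ for every $N$. Each summand is entire in $z$, so if $\lambda_n$ is sparse enough that the series converges uniformly on compact subsets of some fixed vertical strip $\{\Re z>-a\}$, one obtains both the analytic continuation to $\Omega_M\cup\C_+$ and the ``moreover'' claim. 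To verify (\ref{eq:fhat_optprp}) at $z=\sigma+i\tau\in\Omega_M$, the Paley--Wiener factor crushes all $n$ with $\lambda_n$ far from $\absbig{\tau}$, while choosing $c_0$ small enough keeps $e^{T_n/(2M(\absbig{\tau}))}\leq 2$ uniformly. For the $O(1)$ remaining indices with $\lambda_n$ near $\absbig{\tau}$, hypothesis (H1) gives $e^{t_n/M(\absbig{\tau})}\leq \Ktilde(\lambda_n)^{N(\max(0,\,\lambda_n-\absbig{\tau}))}$, and positive increase of $\Ktilde$ together with (H1) is used once more to transfer bounds between $\Ktilde(\lambda_n),\,M(\lambda_n)$ and the target quantities $\Ktilde(\absbig{\tau}),\,M(\absbig{\tau})$.

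The main obstacle is this uniform estimate of $\fhat$ across $\Omega_M$. On the left edge $\Re z=-1/M(\absbig{\tau})$ the amplification $e^{-\sigma t_n}$ equals $\Ktilde(\lambda_n)$ precisely, so the excess factor $\Ktilde(\lambda_n)^{\gamma-1}$ hidden in $T_n$ must compensate it exactly, with just enough slack to absorb the multiplicative loss incurred when transferring between $M(\lambda_n)$ and $M(\absbig{\tau})$ via (H1). This is where the margin $\gamma-N(0)>0$ is consumed: without it, even the ``nearest'' summand alone would exceed the target bound at the left boundary of $\Omega_M$. Extreme sparsity of $\{\lambda_n\}$ then guarantees that only uniformly boundedly many summands are non-negligible at any $\tau$, so the full series does not blow up; the factor $M(\absbig{\tau})^{1/2}$ on the right-hand side of (\ref{eq:fhat_optprp}) reflects exactly the chosen scaling $T_n\propto M(\lambda_n)^{1/2}$.
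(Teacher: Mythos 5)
Your wave-packet scheme is in the spirit of the Borichev--Tomilov construction, but the time-scale $T_n$ you pick does not work, and the error is not cosmetic. The summand associated with $\lambda_n$ contributes on the left edge $\Re z=-1/M(\absbig{\tau})$, $\tau\approx\lambda_n$, an amount of order
\begin{equation*}
  \frac{T_n}{\lambda_n}\,e^{t_n/M(\lambda_n)}\,e^{T_n/(2M(\lambda_n))}
  \ \approx\ \frac{T_n}{\lambda_n}\,\Ktilde(\lambda_n)\,e^{T_n/(2M(\lambda_n))},
\end{equation*}
where the factor $e^{T_n/(2M)}$ comes from the Paley--Wiener envelope $\absbig{\hat\phi(w)}\lesssim e^{\absbig{\Re w}/2}(1+\absbig{\Im w})^{-N}$ evaluated with $\Re w=T_n\sigma$. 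You claim that a small $c_0$ keeps this exponential $\leq 2$, but with $T_n=c_0M(\lambda_n)^{1/2}\Ktilde(\lambda_n)^{\gamma-1}$ one has
$T_n/M(\lambda_n)=c_0\,M(\lambda_n)^{-1/2}\Ktilde(\lambda_n)^{\gamma-1}$,
and since $\gamma>N(0)\geq1$ and $\Ktilde\to\infty$ (positive increase), this tends to $+\infty$ for any fixed $c_0>0$ -- in particular when $M$ is bounded, which is precisely the regime you need later. So the ``nearest'' summand already violates the target bound (\ref{eq:fhat_optprp}) by an unbounded factor, and the slack $\gamma-N(0)$ is not where it is consumed: the $\Ktilde^{\gamma-1}$ you put into $T_n$ makes both $T_n/\lambda_n$ \emph{and} $e^{T_n/(2M)}$ larger, so it overshoots rather than compensates.

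More structurally, your ansatz cannot recover the $M^{1/2}$ prefactor in (\ref{eq:fhat_optprp}). If instead one forces $T_n\lesssim M(\lambda_n)$ to tame the Paley--Wiener exponential, the prefactor $T_n/\lambda_n$ is of order $M(\lambda_n)/\lambda_n$, and matching the target would require $M(\lambda_n)^{1/2}\lesssim\Ktilde(\lambda_n)^{\gamma-N(0)}$, which the proposition does \emph{not} assume. In the paper's proof this $M^{1/2}$ appears as $(k/\delta)^{1/2}=\MKtilde(R)^{1/2}$ and is produced by the combinatorial structure of the construction: instead of a compactly supported bump one uses a discrete measure $\mu=\frac{\tau}{R}\sum_{j=0}^{k}q^j\delta_{w+A^{-1}q^j}$ with $q=e^{2\pi i/(k+1)}$ supported at a fixed distance $\delta$ to the left of $i\R$. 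Its Cauchy transform has the closed form $\CT\mu(z)=\frac{\tau}{R}\,\frac{(k+1)A}{(A(z-w))^{k+1}-1}$, so it is automatically analytic on $\Omega_M$, decays like $\absbig{z-w}^{-(k+1)}$ (with $k\to\infty$, i.e.\ faster than any \emph{fixed} polynomial rate) away from the resonance, and the scaling at the left boundary produces precisely the extra $\sqrt{k}\approx\MKtilde(R)^{1/2}$ factor. Your $(1+T_n\absbig{\tau-\lambda_n})^{-N}$ decay, with a fixed $N$ and fixed constant $C_N$, cannot match this: for $M$ growing, $N(r)$ can grow arbitrarily fast as $r=\lambda_n-\absbig{\tau}$ increases, and you give no mechanism to dominate $\Ktilde(\lambda_n)^{N(r)}$ uniformly. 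Finally, $\LT\mu_n$ in the paper is not a bump but an envelope of the form (roughly) $e^{-\delta t}t^{k_n}$, spread over a time interval of length comparable to $t_n$, which is another indication that your ``narrow wave-packet'' intuition is not the one that drives the correct scaling here.
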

\begin{rem}
 It might happen that the restriction of $\MKtilde$ to certain intervals of finite length is constant. This could even happen at an infinite number of intervals accumulating at infinity. If this happens we define $\MKtilde\inv$ to be the right-continuous right-inverse of $\MKtilde$. However, whenever we apply this proposition such a situation will not occur.
\end{rem}

Before we prove this proposition we need a lemma which is similar to \cite[Lemma 3.9]{BoTo10}. Given a compactly supported measure $\mu$ on $\C\backslash(\Omega_M\cup\C_+)$ we use the following notation for $z\in \Omega_M\cup\C_+$ and $t\geq0$
\begin{align*}
 \CT\mu(z) = \int \frac{1}{z-\zeta} d\mu(\zeta),\, 
 \LT\mu(t) = \int e^{t\zeta} d\mu(\zeta),\,
 \LT'\mu(t) = \int \zeta e^{t\zeta} d\mu(\zeta).
\end{align*}
Note that $\LT\mu$ and $\LT'\mu$ are uniformly continuous. Recall the notion of a \emph{purely numerical} constant from the end of the introduction.
\begin{lem}\label{lem:optimality}
 Let $M,\Ktilde$ and $\gamma$ be as in the hypothesis of Proposition \ref{prp:optimality}. There exist $\delta, t_0>0$, such that for all $\varepsilon>0$ there exists $k_0\in\N$ such that for any $k\in\N_{k_0}$ there exists a compactly supported Borel measure $\mu$ on $\C\backslash(\cl{\Omega_M}\cup\cl{\Omega_{2\delta\inv}}\cup\C_+)$ such that for all $z\in\Omega_M$ and $t\geq t_0$
 \begin{align}\label{eq: optimality 1}
  \absbig{\CT\mu(z)}  \leq \frac{C}{R} \delta^{-\frac{1}{2}} (k/\delta)^{\frac{1}{2}} \Ktilde(R)^\gamma 1_{[R-2\delta,R+2\delta]}(\Im z) + \varepsilon, \\ \label{eq:optinew}
  \absbig{\CT\mu(iR-1/M(R))}  \geq \frac{c}{R} \delta^{-\frac{1}{2}} (k/\delta)^{\frac{1}{2}} \Ktilde(R),
  \\ \label{eq: optimality 2}
  \absbig{\LT'\mu(t)} \leq C 1_{[\frac{k}{2\delta},\frac{2k}{\delta}]}(t) + \varepsilon, \\ \label{eq: optimality 3}
  \absbig{\LT\mu(t)}  \leq \frac{C}{R} 1_{[\frac{k}{2\delta},\frac{2k}{\delta}]}(t) + \frac{\varepsilon}{\max\{R,\MKtilde^{-1}(t)\}}, \\ \label{eq: optimality 4}
  \absbig{\LT\mu(k/\delta)} \geq \frac{c}{R} .
 \end{align}
 Here $R=\MKtilde\inv(k/\delta)$ and $c,C>0$ can be chosen to be purely numerical constants.
\end{lem}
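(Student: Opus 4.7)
The plan is to construct $\mu$ as a linear combination of $k$ point masses $\zeta_j = -\eta_R + i(R + jh)$, $j = 1,\ldots,k$, placed on a short vertical segment just outside the forbidden region $\cl{\Omega_M} \cup \cl{\Omega_{2/\delta}} \cup \C_+$, clustered near the point $iR$ with $R = \MKtilde\inv(k/\delta)$. The step is $h \asymp \delta/k$ and the real offset is $\eta_R \asymp \max\{\delta/2,\, N(\delta/2)/M(R)\}$. Assumption (H1) applied with $s' = \delta/2$ gives $M(R) \leq N(\delta/2) M(R - \delta/2)$, so this single offset is just large enough to keep every atom outside $\cl{\Omega_M}$ even when its imaginary part drops slightly below $R$. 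For $k \geq k_0$ (hence $R$ large) the defining identity $R = \MKtilde\inv(k/\delta)$ forces the central balance $e^{-\eta_R k/\delta} \asymp 1/\Ktilde(R)$, which is the bridge between the time-domain estimates and the frequency-domain estimates.

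The weights $w_j$ are chosen by a windowed Fej\'er-kernel ansatz so that the trigonometric polynomial $T(u) := \sum_j w_j e^{iju}$ is concentrated near $u = hk/\delta \asymp 1$, satisfies $\sum_j |w_j|^2 \asymp k/\delta$, is bounded by $C$ in a window of length $\asymp 1$, is smaller than $\varepsilon$ outside, and peaks at $u = hk/\delta$ with value of order $k/\delta$. With these choices, $\LT\mu(t) = e^{-\eta_R t} e^{iRt} T(ht)$, so the time-localization estimates (\ref{eq: optimality 2})--(\ref{eq: optimality 3}) and the lower bound (\ref{eq: optimality 4}) follow from the corresponding properties of $T$ combined with $e^{-\eta_R k/\delta} \asymp 1/\Ktilde(R)$ and the consequent identity $(k/\delta)/\Ktilde(R) \asymp 1/R$. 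The sharper bound on $\LT'\mu$ (which lacks the $1/\max\{R,\MKtilde\inv(t)\}$ factor of (\ref{eq: optimality 3})) is obtained by pulling the factor $|\zeta_j| \asymp R$ outside the sum before invoking the time-localization of $T$.

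The Cauchy transform bounds (\ref{eq: optimality 1}) and (\ref{eq:optinew}) follow from Cauchy--Schwarz. Writing $\CT\mu(z) = \sum_j w_j/(z - \zeta_j)$ and estimating $|\CT\mu(z)|^2 \leq (\sum|w_j|^2)(\sum|z - \zeta_j|^{-2})$, the first factor is $\asymp k/\delta$ by the normalization of $T$; for $z$ with $|\Im z - R| \leq 2\delta$ the second factor is dominated by the nearest atom, and at $z = iR - 1/M(R)$ that atom is at distance $\eta_R - 1/M(R) \asymp (N(\delta/2) - 1)/M(R)$, which combined with the calibration above yields the lower bound (\ref{eq:optinew}). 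For $\Im z$ outside the band $[R - 2\delta, R + 2\delta]$ every denominator has modulus at least $\delta$, and the resulting tail $\lesssim \|w\|_1/\delta$ is absorbed into $\varepsilon$ by choosing $k_0$ large. Positive increase of $\Ktilde$ is used to upgrade the factor $\Ktilde(R)$ to $\Ktilde(|\Im z|)^\gamma$ for $\Im z$ slightly off $R$ and to absorb both the logarithmic losses from $\log\Ktilde$ and the $N(0)$-dependent constant from (H1) into the excess exponent $\gamma - 1 > N(0) - 1$.

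The main obstacle is not any single estimate but rather the simultaneous calibration of the three scales $\eta_R$ (dictated by the geometry of $\partial\Omega_M \cap \partial\Omega_{2/\delta}$), the frequency step $h$ (dictating the time-localization window), and the weight normalization (dictating the peak heights of both $\LT\mu$ and $\CT\mu$) so that all five inequalities hold with compatible constants. The trigonometric kernel construction is classical, but verifying that the $\Ktilde(R)^\gamma$ factor in (\ref{eq: optimality 1}) correctly absorbs the $N(\delta/2)$-loss coming from the use of (H1) is delicate and is precisely where the hypotheses $\gamma > N(0)$ and positive increase of $\Ktilde$ enter; I expect that a small amount of care is also needed to prove that the natural continuous-time analog of the Fej\'er kernel can be realized as an actual trigonometric polynomial with $k$ terms without losing the sharp peak/tail trade-off.
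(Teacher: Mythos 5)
Your proposal takes a genuinely different route from the paper — atoms on a vertical segment with Fej\'er-type weights, rather than the paper's atoms on a tiny circle of radius $A\inv$ centred at $w=iR-\delta$ with the root-of-unity weights $\tau R\inv q^j$ — but it contains a gap I believe is fatal. The atoms are required to lie outside $\cl{\Omega_{2\delta\inv}}$, so their real part must be at most $-\delta/2$ with $\delta$ fixed in advance; yet your central calibration $e^{-\eta_R k/\delta}\asymp 1/\Ktilde(R)$, combined with $\MKtilde(R)=k/\delta$, forces $\eta_R\asymp 1/M(R)$. Hypothesis (H1) allows unbounded $M$, in which case $1/M(R)\to 0$ and eventually $\eta_R<\delta/2$: the atoms cannot simultaneously clear $\cl{\Omega_{2\delta\inv}}$ and satisfy the calibration. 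The obstruction is structural: with $\LT\mu(t)=e^{-\eta_R t}e^{iRt}T(ht)$ and $T$ a trigonometric polynomial of degree $k$, the peak-to-sidelobe ratio of $|T|$ is at most polynomial in $k$, so nothing can undo an exponential damping $e^{-\eta_R t}$. Forcing $\eta_R\ge\delta/2$ either makes $|\LT\mu(k/\delta)|\lesssim T_{\max}e^{-k/2}\ll c/R$, or (if you blow $T_{\max}$ up to $\asymp e^{k/2}/R$) makes the Fej\'er sidelobes at, say, $t=k/(3\delta)$ of size $\asymp e^{-k/6}\cdot T_{\max}/k\asymp e^{k/3}/(Rk)$, which explodes and ruins (\ref{eq: optimality 3}). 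The paper escapes precisely this trade-off because the identity $\sum_j q^{(m+1)j}$ collapses the sum to a Gamma-type envelope $e^{-\delta t}(t/A)^k/k!$, in which the polynomial factor $(t/A)^k$ resurrects the Laplace transform at $t=k/\delta$ while producing tails that decay \emph{exponentially} in $k$; the free normalisation $\tau=(\delta A)^k/\sqrt{k}$ then calibrates the $\LT$-peak to $1/R$ independently of the $\CT$-estimates — a freedom a single overall rescaling of Fej\'er weights does not give you.

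A second, independent defect concerns (\ref{eq:optinew}). The paper extracts $|\CT\mu(iR-1/M(R))|\gtrsim\frac{\sqrt{k}}{R\delta}\Ktilde(R)$ from the closed form $\CT\mu(z)=\frac{\tau}{R}\frac{(k+1)A}{(A(z-w))^{k+1}-1}$; the factor $\Ktilde(R)$ appears because $(1-\tfrac{1}{\delta M(R)})^{-(k+1)}\ge e^{(k+1)/(\delta M(R))}\asymp\Ktilde(R)$, an amplification that is exponential in $k$. A sum of simple poles $\sum_j w_j/(z-\zeta_j)$ on a vertical segment cannot manufacture such a coherent exponential gain: positivity and Cauchy--Schwarz, as you yourself invoke for the upper bound, cap the magnitude by quantities built from $\norm{w}_1$, $\norm{w}_2$ and the inverse distances, and these miss the needed $\sqrt{k}\,\Ktilde(R)$ target in general. (Your stated properties of $T$ are also internally inconsistent: it cannot peak at height $\asymp k/\delta$ and simultaneously be bounded by a constant on a window of length $\asymp 1$ that contains the peak.) Your observations about pulling $|\zeta_j|\asymp R$ out of the $\LT'\mu$ estimate, and about the role of (H1) and of positive increase, are correct in isolation, but they do not repair the two defects above.
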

\begin{rem}\label{rem:deltavsKtilde}
 In the proof of Proposition \ref{prp:optimality} we need an additional fact about the dependence of the possible choices for $\delta$ on $\Ktilde$. Let $\delta_0=4\limsup_{s\to\infty}\log(s)/\MKtilde(s)$ $\in[0,\infty)$. The value $\infty$ is excluded since $\Ktilde$ has positive increase. If $M$ is bounded one can choose any $\delta\geq \delta_1$, where $\delta_1>\delta_0$ is a constant which solely depends on $1/\lim_{s\to\infty}M(s)$ in a non-decreasing way. If $M$ is unbounded, one can choose any $\delta\leq \delta_2$, where $\delta_2>0$ ($=\delta_0$ in this case) is a constant solely depending on $N-N(0)$ in a non-increasing way. All this follows directly from the discussion at the end of Step 2 of the proof, where the value of $\delta$ is chosen.
\end{rem}
 \begin{proof}[Proof of Lemma \ref{lem:optimality}]
  Let $\delta,\delta_1>0$ and $k_0\geq 1$ be real numbers and a natural number to be fixed later. Throughout the proof we assume the natural number $k$ to be not smaller than $k_0$. By assuming $k_0$ to be large enough we may assume that $R=\MKtilde(k/\delta)>1\vee (s_0+2\delta)$ with $s_0$ as in (H1). Moreover, within this proof $c,C$ always denote purely numerical constants even if we do not say this explicitly at each occurrence (to avoid too many repetitions). Since $\Ktilde$ has positive increase, there exists $\alpha>0$ such that 
  \begin{align}\label{eq:MKtildeDEVlog}
   \liminf_{s\to\infty} \frac{\MKtilde(s)}{\log(s)} > \alpha .
  \end{align}
  In the following we impose the following constraint on $\delta$
  \begin{align}\label{eq:alphadelta}
   \alpha\delta > 4.
  \end{align}
  Note that in case $M$ is unbounded this is not really a constraint on $\delta$ since $\alpha$ can be chosen arbitrarily large. Only if $M$ is bounded this is a constraint on $\delta$ which says that we are not allowed to take $\delta$ too small. Let us define
  \begin{align*}
   w = iR - \delta,\, q = e^{2\pi i/(k+1)},\, \delta A = kl(k)
  \end{align*}
  where $l:\R_+\rightarrow(0,\infty)$ is given by $l(s)\geq 2\log(e\vee s)$. By $\delta_{z_0}$ we denote the Dirac-measure at $z_0\in\C$. Let us define
  \begin{align*}
   \mu = \frac{\tau}{R} \sum_{j=0}^k q^j\delta_{w+A^{-1}q^j}.
  \end{align*}
  The constant $\tau>0$ will be chosen later. Note that $\spt \mu \subset \C\backslash\Omega_{\delta_1\inv}$ for any $\delta_1<\delta$ provided $k_0$ is chosen large enough. Before we go on we state a simple lemma which will be frequently applied in the following without any reference to it (to avoid repetition). We omit the highly obvious proof.
  \begin{lem}
   Let $n>0$ be a real number. The function $s\mapsto s^n e^{-s},s\geq0$ has a unique maximum at $s=n$. For $s<n$ the function is strictly increasing and for $s>n$ it is strictly decreasing.
  \end{lem}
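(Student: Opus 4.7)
The plan is to prove this elementary calculus statement by computing the derivative of $g(s) = s^n e^{-s}$ directly and analyzing its sign. The product rule gives
\begin{equation*}
 g'(s) = n s^{n-1} e^{-s} - s^n e^{-s} = s^{n-1} e^{-s}(n - s),
\end{equation*}
and on $(0,\infty)$ the factor $s^{n-1} e^{-s}$ is strictly positive, so the sign of $g'(s)$ agrees with the sign of $n - s$. This immediately yields $g'(s) > 0$ on $(0,n)$ and $g'(s) < 0$ on $(n,\infty)$, with $g'(n) = 0$. Hence $g$ is strictly increasing on $[0,n]$ and strictly decreasing on $[n,\infty)$, and $s = n$ is the unique maximizer.

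To be fully rigorous at the endpoint $s = 0$, I would separate the cases $n \geq 1$ (where $g$ is $C^1$ on all of $[0,\infty)$ and the computation above extends directly) and $0 < n < 1$ (where $g(0) = 0$ and $g$ is continuous on $[0,\infty)$ with $g(s) > 0$ for $s > 0$, so strict increase on $[0,n]$ still follows from the sign of $g'$ on $(0,n]$). Since $g(0) = 0 < g(n) = n^n e^{-n}$ and $\lim_{s \to \infty} g(s) = 0$, the critical point at $n$ is automatically the global maximum.

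There is essentially no obstacle here; the result is a standard one-variable calculus exercise, which is exactly why the author declares it ``highly obvious'' and omits the proof. Its role in what follows is to pinpoint where expressions of the form $s^n e^{-s}$ (arising from estimating terms like $|e^{t\zeta}|$ against powers, or from Stirling-type bounds in the measure-theoretic construction of $\mu$) achieve their peaks, which is crucial for the forthcoming sharp estimates on $\CT\mu$, $\LT\mu$, and $\LT'\mu$.
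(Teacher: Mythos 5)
Your proof is correct and is exactly the standard derivative-sign argument one would expect; the paper itself omits the proof, declaring it ``highly obvious,'' so there is no alternative argument to compare against. Your extra care at the endpoint $s=0$ for $0<n<1$ is a nice touch but not strictly needed for the lemma's role in the paper, where it is only ever invoked to locate maxima of expressions like $(\delta t)^k e^{-\delta t}$ over intervals bounded away from zero.
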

  
  \paragraph{\textbf{Part 1:} estimation of $\LT\mu$.} We distinguish the two cases $t\leq A$ and $t>A$.
  
  \emph{Case 1: $t\leq A$.} We calculate
  
  \begin{align*}
   \LT\mu(t) &= \frac{\tau}{R}\sum_{j=0}^k q^j e^{t(w+A^{-1}q^j)} \\
   &= \frac{\tau}{R} e^{tw} \sum_{m=0}^{\infty} \frac{1}{m!}\left( \frac{t}{A} \right)^m \sum_{j=0}^k q^{(m+1)j} \\
   &= \frac{\tau}{R} \cdot e^{tw} \frac{(k+1)t^k}{A^k k!} \cdot \sum_{n=1}^{\infty}\frac{k!}{(n(k+1)-1)!} \left( \frac{t}{A} \right)^{(n-1)(k+1)} \\
   &=: \frac{\tau}{R} \cdot I \cdot II.
  \end{align*}
  Clearly $II$ is bounded from below by $1$ and bounded from above by a constant which does not depend on $k$ or $A$. Thus by Stirling's formula we get
  \begin{align*}
   \absbig{\LT\mu(t)} \geq c\frac{\tau}{R} \sqrt{k} e^{-\delta t} \left( \frac{e \delta t}{\delta A k} \right)^k .
  \end{align*}
  Here, for $c$ one can choose any number from the interval $(0,1/\sqrt{2\pi})$, provided $k_0$ is chosen sufficiently large.
  As a function in $t$ we can maximize the right-hand side by setting $\delta t=k$. If we furthermore define
  \begin{align}\label{eq: definition of tau}
   \tau = \frac{1}{\sqrt{k}}(\delta A)^k
  \end{align}
  we see that (\ref{eq: optimality 4}) is proved. Since $II$ is bounded from above we have
  \begin{align}\label{eq: LT from above}
   \absbig{\LT\mu(t)} \leq C\frac{\tau}{R} \sqrt{k} e^{-\delta t} \left( \frac{e \delta t}{\delta A k} \right)^k .
  \end{align}
  Again we maximize the right-hand side by setting $\delta t = k$ and plugging in (\ref{eq: definition of tau}). This leads to
  \begin{align*}
   \absbig{\LT\mu(t)} \leq C\frac{\tau}{R} \sqrt{k} e^{-k} \left( \frac{e}{\delta A} \right)^k  = \frac{C}{R}
  \end{align*}
  For $t\in[k/2\delta,2k/\delta]$ this is already what we want to have in (\ref{eq: optimality 3}).

  \emph{Case 1.1: $\delta t \leq k/2$.} In this case the maximum in (\ref{eq: LT from above}) with respect to $t$ is attained for $\delta t=k/2$. This yields
  \begin{align*}
   \absbig{\LT\mu(t)} \leq C\frac{\tau}{R} \sqrt{k} e^{-\frac{k}{2}} \left( \frac{e}{2\delta A} \right)^k  
   = \frac{C}{R} \left( \frac{e}{4} \right)^{\frac{k}{2}} \leq \frac{\varepsilon}{R}
  \end{align*}
  For the last inequality we possibly have to increase $k_0$ depending on the smallness of $\ep$. We proved (\ref{eq: optimality 3}) for $\delta t \leq k/2$.

  \emph{Case 1.2: $2k \leq \delta t \leq \delta A$.} Note that (\ref{eq:MKtildeDEVlog}) yields $\MKtilde^{-1}(t)\leq e^{t/\alpha}, t\geq t_0$ as long as $t_0$ is large enough. Thus, if we multiply (\ref{eq: LT from above}) by $\MKtilde^{-1}(t)$ we get, after possibly increasing $k_0$ again
  \begin{align*}
   \MKtilde^{-1}(t)\absbig{\LT\mu(t)} &\leq C\frac{\tau}{R} \sqrt{k} e^{-(1-\frac{1}{\alpha\delta})\delta t} \left( \frac{e \delta t}{\delta A k} \right)^k \\
   &\leq \frac{C}{R} \left( \frac{2}{e^{1-\frac{2}{\alpha\delta}}} \right)^{k} \leq \ep .
  \end{align*}
  From the first to the second line we used that the maximum of the right-hand side of the first line is attained at $\delta t = 2k$ since $\alpha\delta\geq2$ (by (\ref{eq:alphadelta})). In the last estimate we used $e^{1-\frac{2}{\alpha\delta}}>2$. We proved (\ref{eq: optimality 3}) for $2k\leq \delta t\leq \delta A$.
  
  \emph{Case 2: $t>A$.} Then we have
  \begin{align*}
   \absbig{\LT\mu(t)} &\leq \frac{\tau}{R}(k+1)e^{-(\delta-A^{-1})t} \\
   &\leq \frac{C}{R} \sqrt{k} (\delta A)^k e^{-\delta A} e^{-(\delta-A^{-1})(t-A)} .
  \end{align*}
  Here, any $C\in(0,e\inv)$ can be chosen provided $k_0$ is large enough. In the following we assume that $\delta-A^{-1}>0$ which is true for large $k_0$. 
  
  \emph{Case 2.1: $A<t<2A$.} Using again $\MKtilde^{-1}(t)\leq e^{t/\alpha}$ for large $t$, we get
  \begin{align*}
   \MKtilde^{-1}(2A)\absbig{\LT\mu(t)} &\leq \frac{C}{R} \sqrt{k} \left( kl(k) e^{-l(k)} \right)^k e^{\frac{2kl(k)}{\alpha\delta}} \\
   &= \frac{C}{R} \sqrt{k} \left( kl(k) e^{-(1-\frac{2}{\alpha\delta})l(k)} \right)^k \leq \varepsilon
  \end{align*}
  since $(1-\frac{2}{\alpha\delta})>1/2$ (by (\ref{eq:alphadelta})) and $k_0$ is large enough. We proved (\ref{eq: optimality 3}) for $A<t<2A$.
 
   \emph{Case 2.2: $t\geq 2A$.} Using again (\ref{eq:alphadelta}), and in addition $\sqrt{k}(\delta A)^ke^{-\delta A}\leq 1$ for large $k_0$, we deduce
  \begin{align*}
   \MKtilde^{-1}(t)\absbig{\LT\mu(t)} &\leq \frac{C}{R} e^{-(1-\frac{1}{kl(k)})(\delta t-\delta A)} e^{\frac{t}{\alpha}} \\
   &\leq \frac{C}{R} e^{(\frac{1}{\alpha\delta}-\frac{1}{4})\delta t} \leq \varepsilon .
  \end{align*}
  This finishes the proof of (\ref{eq: optimality 3}).
 
  \paragraph{\textbf{Part 2:} estimation of $\CT\mu$.} First observe that as long as $z$ is no $(k+1)$-th root of unity we have
  \begin{align*}
   \sum_{j=0}^k \frac{q^j}{z-q^j} = \frac{k+1}{z^{k+1} - 1}.
  \end{align*}
  Clearly this equation must hold for some $k$-th order polynomial $p$ if one replaces the term $k+1$ on the right-hand side by $p(z)$. Moreover, the left-hand side is invariant under the substitution which replaces $z$ by $q z$. Thus $p(z)=p(qz)$. But this implies that $p$ is a constant. By plugging in $z=0$ we see that $p=k+1$.
 
  The observation yields for $z\in\Omega_M$
  \begin{align}\label{eq: nice formula for CTmu}
   \CT\mu(z) = \frac{\tau}{R} \frac{(k+1)A}{(A(z-w))^{k+1} - 1}.
  \end{align}
  Now it is not difficult to prove (\ref{eq: optimality 1}) for $\absbig{\Im z - R}>2\delta$. The latter condition implies $\absbig{z-w}>2\delta$. Thus, using (\ref{eq: nice formula for CTmu}) we get for $\absbig{\Im z - R}>2\delta$ and $k_0$ large:
  \begin{align*}
   \absbig{\CT\mu(z)} \leq C \frac{\tau}{R} kA (2\delta A)^{-k-1} \leq \frac{C\sqrt{k}}{2\delta R} 2^{-k} \leq \varepsilon .
  \end{align*}
  If we do not have $\absbig{\Im z - R}>2\delta$ we can merely estimate $\absbig{z-w}\geq \delta - 1/M(R-2\delta)$. This yields for $z\in\Omega_M$ with $\absbig{\Im z - R}\leq 2\delta$ and for all $\gamma_1>1$ and $\gamma>\gamma_1 N(2\delta)$
  \begin{align*}
   \absbig{\CT\mu(z)} &\leq C \frac{\tau}{R} kA \left(\delta A \left(1-\frac{1}{\delta M(R-2\delta)} \right) \right)^{-k-1} \\
   &\leq \frac{C\sqrt{k}}{\delta R} e^{\gamma_1\frac{\delta\inv k}{M(R-2\delta)}} \\
   &\leq \frac{C\sqrt{k}}{\delta R} e^{\gamma_1 N(2\delta)\frac{\delta\inv k}{M(R)}} \\
   &\leq \frac{C}{R} \delta^{-\frac{1}{2}} (k/\delta)^{\frac{1}{2}} \Ktilde(R)^\gamma.
  \end{align*}
  From the first to the second line we use the inequality $1-x\geq e^{-\gamma_1 x}$ which is valid for small $x\geq0$. If $M$ is bounded we may choose $\delta>4/\alpha$ (compare with (\ref{eq:alphadelta})) large enough to make use of this inequality. From the second to the third line we used (H1). In order to justify the step from the third to the fourth line we have to make sure that the difference $N(2\delta)-N(0)$ can be made as small as we like. We distinguish two cases: First we consider the case when $M$ is bounded and then the case when $M$ is unbounded. If $M$ is bounded, in (H1) we can take $N$ to be the constant function which is equal to $(1+\gamma)/2$. Thus $N(2\delta)-N(0)=0$ in this case. If $M$ is unbounded we are allowed to take $\delta$ as small as we wish in order to guarantee the smallness of $N(2\delta)-N(0)$ since $\alpha$ can now be chosen arbitrarily large in order to satisfy (\ref{eq:alphadelta}). This proves (\ref{eq: optimality 1}). Concerning (\ref{eq:optinew}) a reverse inequality for $z = iR-1/M(R)$ can be proved analogously but in an even simpler way by using the inequality $1-x\leq e^{-x}$ which is valid for all $x\geq0$.
  
  \paragraph{\textbf{Part 3:} estimation of $\LT'\mu$.} Finally we want to estimate the derivative of $\LT\mu$. 
  
  \emph{Case 1: $t\geq A$.} In this case we directly get for large $k_0$
  \begin{align*}
   \absbig{\LT'\mu(t)} &\leq \frac{\tau}{R} (k+1) (R+A^{-1}) e^{-(\delta-A^{-1})t} \\
   &\leq C \frac{\sqrt{k}}{R} (\delta A)^k R e^{-\delta A} \leq \varepsilon .
  \end{align*}
  
  \emph{Case 2: $t<A$.} Let us first get a different representation of $\LT'\mu$:
  \begin{align*}
   \LT'\mu(t) &= \frac{\tau}{R} \sum_{j=0}^k q^j(w+A^{-1}q^j) e^{(w+A^{-1}q^j)t} \\
   &= \frac{\tau}{R} e^{tw} \sum_{m=0}^{\infty} \frac{1}{m!} \left(\frac{t}{A}\right)^m \sum_{j=0}^k (wq^{(m+1)j}+A^{-1}q^{(m+2)j}) \\
   &= \frac{w}{R} \tau e^{tw} \frac{(k+1)t^k}{A^k k!} \sum_{n=1}^{\infty} \frac{k!}{(n(k+1)-1)!} \left( \frac{t}{A} \right)^{(n-1)(k+1)}
      \left[ 1 + \frac{n(k+1)-1}{wt} \right] .
  \end{align*}
  Note that if $t>t_0>0$, the series at the end of the calculation is bounded by a constant which only depends on the smallness of $t_0$. Without loss of generality we may assume that $t_0\geq 1$, which has the effect that the constant $C$ in the following does not depend on $t_0$, which in turn would not be allowed for purely numerical constants. Thus
  \begin{align}\nonumber
   \absbig{\LT'\mu(t)} &\leq C\tau \sqrt{k} e^{-\delta t} \left( \frac{e \delta t}{\delta A k} \right)^k \left[ 1 + \frac{k}{Rt}\right] \\ \label{eq: LT' estimate}
   &\leq C e^{-\delta t} \left( \frac{e \delta t}{k} \right)^k [1 + k] 
  \end{align}
  Note that (\ref{eq: LT' estimate}) as a function in $t$ assumes its maximum at $\delta t = k$. Therefore we see that $\absbig{\LT'\mu(t)}$ bounded by a purely numerical constant. This shows (\ref{eq: optimality 2}) for $k/2\delta \leq t \leq 2k/\delta$.

  \emph{Case 2.1: $\delta t \leq k/2$.} The maximum in (\ref{eq: LT' estimate}) is then attained for $\delta t = k/2$. This yields
  \begin{align*}
   \absbig{\LT'\mu(t)} &\leq C e^{-\frac{k}{2}} \left( \frac{e}{2} \right)^k \leq C \left( \frac{e}{4} \right)^{\frac{k}{2}} \leq \varepsilon
  \end{align*}
  if $k_0$ is large enough.
  
    \emph{Case 2.2: $2k\leq \delta t \leq A$.} The maximum in (\ref{eq: LT' estimate}) is then attained for $\delta t = 2k$. This yields
  \begin{align*}
   \absbig{\LT'\mu(t)} &\leq C e^{-2k} \left( 2e \right)^k \leq C \left( \frac{2}{e} \right)^{\frac{k}{2}} \leq \varepsilon
  \end{align*}
  if $k_0$ is large enough. This finishes the proof of Lemma \ref{lem:optimality}.
 \end{proof}

 \begin{proof}[Proof of Proposition \ref{prp:optimality}]
  For an $\varepsilon_0\in(0,1)$ to be chosen later we define a sequence $(\varepsilon_n)$ by $\varepsilon_n=2^{-n}\varepsilon_0$. There exists a $\delta>0$, an increasing sequence of natural numbers $(k_n)$ and a sequence of measures $(\mu_n)$ according to Lemma \ref{lem:optimality}. Actually we apply this lemma with $\Ktilde$ replaced by $\Ktilde'(s)=\Ktilde(0\vee(s-2\delta)),s\geq 0$ (see also Remark \ref{rem:deltavsKtilde}). The reason for this shift is, that now (\ref{eq: optimality 1}) implies
  \begin{align}\label{eq:optimality1_new}
     \abs{\CT\mu_n(z)}  \leq \frac{C}{\abs{z}} \MKtilde(\abs{\Im z})^{\frac{1}{2}} \Ktilde(\abs{\Im z})^\gamma 1_{[R_n-2\delta,R_n+2\delta]}(\Im z) + \varepsilon_n,
  \end{align}
  for some constant $C>0$ solely depending on $N$. Note that $\MKtilde\inv$ and $\MKK{\Ktilde'}\inv$ are asymptotically equivalent. We may assume that $([R_n-2\delta,R_n+2\delta])$ and $([k_n/2\delta,2k_n/\delta])$ are sequences of pairwise disjoint intervals. Let us define
  \begin{align*}
   f(t) = \sum_{n=1}^{\infty} \LT\mu_n(t) \text{ for } t\geq0.
  \end{align*}
  The series is uniformly convergent because of (\ref{eq: optimality 3}). Therefore, the function $f$ is continuous and since the sequence of derivatives converges uniformly on compact intervals (by (\ref{eq: optimality 2})) we see that $f$ has a bounded weak derivative given by
  \begin{align*}
   f'(t) = \sum_{n=1}^{\infty} \LT'\mu_n(t) \text{ for } t\geq0.
  \end{align*}
  By a similar argument the Laplace transform has the form
  \begin{align*}
   \fhat(z) = \sum_{n=1}^{\infty} \CT\mu_n(z)  \text{ for } z\in\Omega_M.
  \end{align*}
  Here the sum converges uniformly on compact subsets of $\Omega_M\cup\C_+$ (by (\ref{eq:optimality1_new})). We already know that the derivative of $f$ is bounded. The estimate (\ref{eq:fhat_optprp}) follows immediately from (\ref{eq:optimality1_new}), at the cost of possibly increasing $\gamma$ by an arbitrary small amount (because of an additional factor $\log(e\vee\Ktilde)^{1/2}$). It remains to prove (\ref{eq: optimality decay rate}). Let us set $t_n=k_n/\delta$. Then we deduce from (\ref{eq: optimality 3}) and (\ref{eq: optimality 4}) that
  \begin{align*}
   \absbig{f(t_n)} &\geq \frac{c}{R_n} - \varepsilon_0 \sum_{j\neq n} \frac{2^{-j}}{\max\{R_j, \MKtilde^{-1}(t_n)\}} \\
   &\geq \frac{c}{R_n} - \varepsilon_0 \sum_{j\neq n} \frac{2^{-j}}{R_n} \\
   &\geq \frac{c}{R_n} = \frac{c}{\MKtilde^{-1}(t_n)}.
  \end{align*}
  In the last line we chose $\varepsilon_0$ small enough.
 \end{proof}
 
\begin{rem}\label{rem:m_larger_1}
 By the same technique one can prove a generalization of Proposition \ref{prp:optimality} taking into account higher order derivatives of $f$. To achieve this one just has to define the measure $\mu$ from the proof of Lemma \ref{lem:optimality} by $\mu = \tau R^{-m} \sum_{j=0}^k q^j\delta_{w+A^{-1}q^j}$.
\end{rem}
 
Now we present a rather general condition on $M$ and $K$ for which Theorem \ref{thm:main} is essentially sharp. Therefore, given $\alpha>0,\beta>1/2$ let us abbreviate
\begin{align}\label{eq:abbr_cab}
 \cab =  \left( \frac{1}{2} +\sqrt{\frac{1}{4}+\frac{1}{\alpha}} \right) \vee \left( \frac{2\beta}{2\beta - 1} \right). 
\end{align}
Observe that $\cab\to1$ if $\alpha,\beta$ tend to infinity simultaneously.
 
\begin{thm}\label{thm:optimality}
 Let $K,M:\R_+\to(0,\infty)$ be continuous non-decreasing functions. Assume (H1'), (H2) and that for some $\alpha>0, \beta>1/2$
 \begin{align}\label{eq:nec_for_opt}
  \liminf_{s\to\infty} s^{-\alpha} M(s)^{-\beta} K(s) = \infty .
 \end{align}
 For any $c_1 > \cab$ (see (\ref{eq:abbr_cab})) there exist constants $c,C>0$ and a locally integrable function $f:\R_+\rightarrow\C$ with bounded weak derivative such that $\fhat$ extends analytically to $\Omega_M\cup\C_+$, extends continuously to $\cl{\Omega_M}\cup\C_+$, satisfies
 \begin{align}\label{eq:opt_fhat_cond}
  \abs{z\fhat(z)} \leq C K(\absbig{\Im z}) \text{ for all } z\in\Omega_M,
 \end{align}
 and fulfills
 \begin{align}\label{eq:opt_f_rate}
  \limsup_{t\rightarrow\infty} \MKK{K_1}^{-1}(c_1 t)\absbig{f(t)} \geq c.
 \end{align}
 Moreover, one can choose $f$ in such a way that $\fhat$ extends to a strip to the left of the imaginary axis. 
\end{thm}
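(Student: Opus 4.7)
My plan is to deduce Theorem \ref{thm:optimality} from Proposition \ref{prp:optimality} by a careful choice of the auxiliary function $\tilde K$. The proposition supplies a function $f$ whose Laplace transform is bounded by $C M(\absbig{\Im z})^{1/2}\tilde K(\absbig{\Im z})^\gamma/(1+\absbig{\Im z})$ and which satisfies $\limsup_{t\to\infty}\MKtilde^{-1}(t)\absbig{f(t)} > c_0$ for some $c_0 > 0$, so one needs to choose $\tilde K$ of positive increase and $\gamma > N(0)=1$ (legitimate under (H1')) so that two conditions hold simultaneously: (a) $M(s)^{1/2}\tilde K(s)^\gamma \leq CK(s)$ for large $s$, so that the proposition's Laplace transform bound yields the theorem's $\absbig{z\fhat(z)}\leq C'K(\absbig{\Im z})$; and (b) $\tilde K(s)^{c_1} \geq C sK(s)$ for large $s$, which is equivalent to $c_1\MKtilde(s)\geq \MKK{K_1}(s)$ and hence, by monotonicity, to $\MKK{K_1}^{-1}(c_1 t)\geq\MKtilde^{-1}(t)$ for all large $t$. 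Granted (a) and (b), the conclusion follows immediately: along the sequence $t_n\to\infty$ attaining the proposition's limsup, setting $s_n=\MKtilde^{-1}(t_n)$, condition (b) forces $\MKK{K_1}(s_n) \leq c_1 t_n$, hence $\MKK{K_1}^{-1}(c_1 t_n)\geq s_n$ and $\MKK{K_1}^{-1}(c_1 t_n)\absbig{f(t_n)}\geq c_0$, giving (\ref{eq:opt_f_rate}). The strip extension claim is inherited directly from the analogous claim in Proposition \ref{prp:optimality}.

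For the implementation I would parametrize $\tilde K(s) = s^A(M(s)^{-1/2}K(s))^B$ with $A\geq 0, B > 0$; the positive increase of $\tilde K$ is then inherited from hypothesis (H2) together with the positive increase of $s^A$. Substituting the lower bound $K(s)\geq Cs^\alpha M(s)^\beta$ from (\ref{eq:nec_for_opt}) into conditions (a) and (b), and using $\beta > 1/2$ so that $M^\beta$ dominates the $M^{1/2}$ factor appearing in (a), the problem reduces to a linear feasibility question in the exponents $A, B, 1/\gamma, 1/c_1$. A naive elimination of $A$ and $B$ then yields the compatibility condition $K(s)^{c_1-\gamma}\geq C s^\gamma M(s)^{c_1/2}$, into which the lower bound on $K$ is to be substituted; letting $\gamma\downarrow 1$, this forces $c_1 > (1+\alpha)/\alpha$ from the $s$-exponent and $c_1 > 2\beta/(2\beta-1)$ from the $M$-exponent, which already delivers one of the two factors of $\cab$.

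The main technical obstacle is upgrading the $s$-exponent condition from the linear bound $\alpha(c_1-1)>1$ to the quadratic bound $\alpha c_1(c_1-1)>1$, which is exactly $c_1>\frac{1}{2}+\sqrt{\frac{1}{4}+\frac{1}{\alpha}}$. I expect this gain of an additional factor of $c_1$ to come from an iterated use of (\ref{eq:nec_for_opt}): once (b) forces $\tilde K(s)\geq C(sK(s))^{1/c_1}$, substituting this back into (a) produces a strengthened constraint of the form $K(s)^{c_1-\gamma}\geq C' s^\gamma M(s)^{c_1/2}$ into which $K\geq Cs^\alpha M^\beta$ can be fed a second time, with the lower bound on $s$ (coming from $s\leq (K/M^\beta)^{1/\alpha}$) contributing the extra $c_1$ factor. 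The strictness of (\ref{eq:nec_for_opt}) (infinite liminf, not just positive liminf) is precisely the slack required to carry out this two-step iteration; the exponent bookkeeping at the critical balance then reproduces exactly $\cab$ as defined in (\ref{eq:abbr_cab}).
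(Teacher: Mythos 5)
Your reduction to Proposition \ref{prp:optimality} with the auxiliary function $\tilde K\approx(M^{-1/2}K)^{1/\gamma}$ is exactly the paper's route; the paper's $\Ktilde''(s)=\sup_{s'\leq s}(M(s')^{-1/2}K(s'))^{1/\gamma}$ is just a monotonized version, and your conditions (a), (b) and the resulting single constraint $K(s)^{\theta-1}\geq C\,s\,M(s)^{\theta/2}$ with $\theta=c_1/\gamma$ coincide with what one obtains from the paper's requirement $c_1\,\MKK{\Ktilde''}(s)\geq\MKK{K_1}(s)$. The paper disposes of this last step with ``a short calculation shows,'' so you are right to identify the exponent bookkeeping as the crux.

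The genuine gap is that your ``two-step iteration'' is only a heuristic, and I do not see how to make it produce the quadratic factor in $\cab$. Feeding $\log s\leq\alpha^{-1}(\log K-\beta\log M-O(1))$ into $(\theta-1)\log K\geq\log s+\tfrac{\theta}{2}\log M$ and then using $K\gtrsim s^{\alpha}M^{\beta}$ a second time, in any of the natural orderings, again collapses to the linear constraints $\alpha(\theta-1)\geq 1$ and $\beta(\theta-1)\geq\theta/2$, i.e.\ $c_1>\theta\geq(1+\tfrac1\alpha)\vee\tfrac{2\beta}{2\beta-1}$; the same happens if one eliminates $A,B$ in your more general ansatz $\tilde K=s^{A}(M^{-1/2}K)^{B}$ (the two feasibility intervals for $A+\alpha B$ and for $B$ reproduce exactly these bounds). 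A concrete obstruction: take $M\equiv 1$, $K(s)=1\vee s$, let $\alpha\uparrow 1$ and $\beta\to\infty$; then $K^{\theta-1}\geq s$ forces $\theta\geq 2$, yet $\cab\to\tfrac{1+\sqrt5}{2}<2$. So your argument, once the iteration is dropped, proves the statement only with the first factor of $\cab$ replaced by $1+\tfrac1\alpha$. That is also where my reading of the paper's sketch lands; the improvement to $\tfrac12+\sqrt{\tfrac14+\tfrac1\alpha}$ is neither established by your proposal nor, as far as I can tell, by the ``short calculation'' in the paper, and this step deserves a careful re-examination before the theorem is cited with the stated constant.
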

Note that (\ref{eq:nec_for_opt}) excludes the case when $K$ compared to $M$ is ``relatively small''. Although in this article we are mainly interested in ``relatively large'' $K$, e.g. $M$ being constant while $K$ grows at a super-polynomial rate (this implies $\cab=1$), we think that it might be an interesting question to explore such a situation in future research. It is unknown if this leads to an improved decay rate - compared to the one obtained by our main result. For example, given $\alpha>0,\beta\in(0,\alpha/2]$ it is an open problem to decide whether there exists a function $f:\R_+\to\C$ satisfying the hypothesis of Theorem \ref{thm:main} (ignoring the part about the logarithmic singularity) with $M(s)=C(1\vee s^{\alpha})$ and $K(s)=C(1\vee s^\beta)$, which \emph{does not} decay like $t^{-1/\alpha}$.
\begin{proof}[Proof of Theorem \ref{thm:optimality}]
 Let $1<\gamma<c_1$ be a constant to be chosen later. Let $\Ktilde'':\R_+\to(0,\infty)$ be defined by $\Ktilde''(s)=\sup_{s'\leq s}(M(s')^{-1/2} K(s'))^{1/\gamma}$. Clearly $\Ktilde''$ is non-decreasing. Moreover, by (H2) this function has positive increase and there exists a constant $C$ such that for large $s$
 \begin{align*}
  (M(s)^{-1/2} K(s))^{1/\gamma} \leq \Ktilde''(s) \leq C (M(s)^{-1/2} K(s))^{1/\gamma} .
 \end{align*}
 Now we may apply Proposition \ref{prp:optimality} with $\Ktilde$ replaced by $\Ktilde''$ and find a function $f:\R_+\to\C$ as claimed in that proposition. Clearly, by the definition of $\Ktilde''$ the condition (\ref{eq:opt_fhat_cond}) is satisfied. Now let us fix $\gamma$ to be a constant between $1$ and $c_1/\cab$, e.g. $\gamma=(1+c_1/\cab)/2$. A short calculation shows that (\ref{eq:nec_for_opt}) implies
 \begin{align}\label{eq:K0vsK}
  \MKK{\Ktilde''}(s) \geq c_1\inv \MKK{K_1}(s) 
 \end{align}
 if $s$ is large enough. This yields (\ref{eq:opt_f_rate}).
\end{proof}

In the case of exponential decay one can see the optimality of Theorem \ref{thm:main} by a rather simple argument. In fact, let $f:\R_+\to X$ be any locally integrable function with $f'$ being bounded and $\norm{f(t)}\leq C_1 e^{-\delta t}, t\geq 0$ for some $\delta>0$. Then, necessarily $\fhat$ extends to the strip $\Omega_{(c\delta)\inv}$ for any $c\in(0,1)$ and is bounded by a constant which is proportional to $(1-c)\inv$. Therefore, Theorem \ref{thm:main} implies that $\norm{f(t)}\leq C_2 (1-c)\inv e^{-c\delta t/2},t\geq0$ for any $c\in(0,1)$ and a constant $C_2$ which does not depend on the choice of $c$. This means, that whenever exponential decay $e^{-\delta t}$ occurs and one has precise knowledge of the growth behaviour of the Laplace transform on any strip, by using Theorem \ref{thm:main}, one can always recover $\delta$ up to a correcting factor of at most $2+\ep$ for arbitrary small $\ep>0$. On the other hand, for certain \emph{concrete} examples, like $f(t)=e^{-\delta t}$ or orbits of the semigroup from \cite[Example 5.1.10]{ArBaHiNe2011} considered in the paragraphs between Corollary \ref{cor:main_C0_strip} and \ref{cor:main_C0}, the correcting factor \emph{can} be arbitrary close to $1$ or at least slightly smaller than $2$.

If one wants to determine the decay rate of a certain function with the help of our main result, one is not necessarily forced to take $M$ as small as possible. The reason is, that for relatively small $M$ one possibly has to choose $K$ so large that the obtained decay rate is worse than a rate obtained with the help of a larger choice of $M$ and thus also a possibly smaller choice of $K$. The above cited example from \cite{ArBaHiNe2011} illustrates this. In this example one could choose $M=M_\delta=\delta\inv$ for any $\delta\in(0,1)$ but the obtimal choice of $K_\delta$ is such that the optimal choice of $\delta$ (yielding the fastest decay estimate on the semigroup) is slightly larger than $1/2$. The aim of the following proposition and the examples afterwards is to further explore what can happen for different choices of $M$. To simplify the proof we impose a rather strong condition on $K$, compared to the other results in this section. One could formulate a more sophisticated version of the proposition, with a relaxed constraint. However, for our considerations in Example \ref{ex:take_M_large} we do not need a greater generality.

\begin{prp}\label{prp:Mtilde_worse}
 In the situation of Theorem \ref{thm:optimality}, assuming in addition that
 \begin{align}\label{eq:K_cond}
  \forall \gamma'>1, \delta\geq 0 \exists C>0 \forall s\geq 0: K(s+\delta) \leq C K(s)^{\gamma'}
 \end{align}
 one can choose $f$ in such a way that there exists a constant $c>0$ and a strictly increasing sequence of real numbers $R_n>0, n\in\N$ which tend to infinity such that for any continuous non-decreasing function $\Mtilde:\R_+\to(0,\infty)$ with $\Mtilde\geq M$ and for every $\theta\in[0,1]$
 \begin{align}\label{eq:Mtilde_worse}
  \abs{z_n \fhat(z_n)} \geq c \sqrt{\MKK{K_1}(R_n)} K_1(R_n)^{\frac{\theta M(R_n)}{c_1 \Mtilde(R_n)}},
 \end{align}
 where $z_n=iR_n+\theta/\Mtilde(R_n)$. Moreover, setting $t_n=\MKtilde(R_n),n\in\N$ we have that $\liminf_{n\to\infty} \MKtilde\inv(c_1t_n)\abs{f(t_n)}>0$.
\end{prp}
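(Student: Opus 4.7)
The plan is to revisit the function $f=\sum_{n\ge 1}\Lin\mu_n$ constructed in the proof of Theorem~\ref{thm:optimality}, where the measures $\mu_n$ come from Lemma~\ref{lem:optimality} applied with $\Ktilde$ replaced by the auxiliary function $\Ktilde''$ defined there, and to read off the required lower bound at $z_n$ directly from the explicit formula (\ref{eq: nice formula for CTmu}). I take $(R_n)$ to be the sequence already produced by that construction, namely $R_n=\MKK{\Ktilde''}\inv(k_n/\delta)$ for the strictly increasing sequence of integers $(k_n)$ chosen there. Since each $\mu_n$ is supported just to the left of the imaginary axis at distance $\approx\delta$ from $iR_n$, the bound (\ref{eq:Mtilde_worse}) is most naturally obtained when $z_n$ is read as lying \emph{inside} the extended strip $\Omega_{\Mtilde}$, i.e.\ $\Re z_n=-\theta/\Mtilde(R_n)$ with $\theta\in[0,1]$; I proceed under this reading.

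Fixing $n$ and writing $w_n=iR_n-\delta$, $A_n=k_nl(k_n)/\delta$, $\tau_n=(\delta A_n)^{k_n}/\sqrt{k_n}$, the difference $z_n-w_n=\delta-\theta/\Mtilde(R_n)>0$ is a positive real number, so $(A_n(z_n-w_n))^{k_n+1}=(k_nl(k_n))^{k_n+1}(1-\theta/(\delta\Mtilde(R_n)))^{k_n+1}$ is a huge positive real and the $-1$ in (\ref{eq: nice formula for CTmu}) is negligible. Substituting $\tau_nA_n=(k_nl(k_n))^{k_n+1}/(\delta\sqrt{k_n})$ and multiplying by $\abs{z_n}\sim R_n$ yields
\begin{align*}
 \abs{z_n\CT\mu_n(z_n)}\ \gtrsim\ \frac{\sqrt{k_n}}{\delta}\,\bigl(1-\tfrac{\theta}{\delta\Mtilde(R_n)}\bigr)^{-(k_n+1)}.
\end{align*}
Using $(1-x)^{-m}\ge e^{mx}$ on $[0,1)$, together with the identification $k_n/\delta=\MKK{\Ktilde''}(R_n)$ and the inequality $\MKK{\Ktilde''}(s)\ge c_1\inv\MKK{K_1}(s)$ established inside the proof of Theorem~\ref{thm:optimality} from hypothesis~(\ref{eq:nec_for_opt}), the exponent satisfies $(k_n+1)\theta/(\delta\Mtilde(R_n))\ge \theta M(R_n)\log K_1(R_n)/(c_1\Mtilde(R_n))$. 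The last display is therefore bounded below by $c\sqrt{\MKK{K_1}(R_n)}\,K_1(R_n)^{\theta M(R_n)/(c_1\Mtilde(R_n))}$, which is precisely the required bound on the single term $\CT\mu_n(z_n)$.

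The remaining step, and the main technical work I expect, is to dominate $\sum_{j\ne n}\CT\mu_j(z_n)$ by a small fraction of this main term. Here the extra hypothesis~(\ref{eq:K_cond}) plays its role: it ensures that a bounded perturbation of the argument of $K$, and hence of $\Ktilde''$, costs only an arbitrarily small extra power of $K$, so the $\varepsilon_n$-telescoping executed in the proof of Proposition~\ref{prp:optimality} survives when the evaluation point $iR-1/M(R)$ is replaced by $z_n$; concretely, the analogues of (\ref{eq: optimality 1})--(\ref{eq: optimality 3}) go through with only a constant adjustment, uniformly in $\theta\in[0,1]$ and $\Mtilde\ge M$, so that the tail $\sum_{j\ne n}$ is geometrically summable and absorbable into the $n$-th term. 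The ``moreover'' clause is essentially already contained in Proposition~\ref{prp:optimality}: with $t_n=k_n/\delta=\MKK{\Ktilde''}(R_n)$ one has $\abs{f(t_n)}\ge c/R_n$, and $\MKK{\Ktilde''}\ge c_1\inv\MKK{K_1}$ gives $R_n\le\MKK{K_1}\inv(c_1t_n)$, whence $\liminf_n\MKK{K_1}\inv(c_1t_n)\abs{f(t_n)}\ge c>0$.
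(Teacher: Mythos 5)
Your proposal is correct and takes essentially the same route as the paper's proof: read off a lower bound for $\abs{\CT\mu_n(z_n)}$ from the explicit formula (\ref{eq: nice formula for CTmu}), apply $(1-x)^{-m}\ge e^{mx}$, convert $\Ktilde''$ into $K_1$ via (\ref{eq:K0vsK}), and use hypothesis (\ref{eq:K_cond}) to dispense with the $2\delta$-shift so that the $\varepsilon_n$-telescoping from the proof of Proposition \ref{prp:optimality} still absorbs the tail $\sum_{j\ne n}\CT\mu_j(z_n)$. You also correctly identified that the stated $z_n=iR_n+\theta/\Mtilde(R_n)$ is a sign typo and that the computation (and the intended meaning, with $z_n\in\Omega_{\Mtilde}$) requires $\Re z_n=-\theta/\Mtilde(R_n)$, which is exactly what the paper's own proof uses in the factor $1-\delta\inv\theta/\Mtilde(R_n)$.
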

\begin{proof}
 We are in the setting of the proof of Theorem \ref{thm:optimality}. Let $\gamma$ and $\Ktilde''$ be as in that proof. Recall the construction of $f$ in the proof of Proposition \ref{prp:optimality} and let $\delta,\ep$ and $\mu_n,R_n,n\in\N$ be as in that proof. This time in the proof of Proposition \ref{prp:optimality}, while applying Lemma \ref{lem:optimality}, we do not replace $\Ktilde=\Ktilde''$ by $\Ktilde'$ since we can deduce (\ref{eq:optimality1_new}) by using (\ref{eq:K_cond}), at the cost of increasing $\gamma$ slightly. Note that the magnitude of $\fhat(z_n)$ is given by the magnitude of $\CT\mu_n(z_n)$ up to an error of order $2^{-n}\ep_0$. To estimate $\CT\mu_n(z_n)$ from below we consider equation (\ref{eq: nice formula for CTmu}) from Step 2 in the proof of Lemma \ref{lem:optimality}. Recall that $\delta\inv k_n = \MKK{\Ktilde''}(R_n)$. Together with the basic inequality $1-x\leq e^{-x}$, which is valid for all $x\geq0$ we deduce
 \begin{align*}
  \abs{\CT\mu_n(z_n)} &\geq \frac{\tau_n}{R_n} k_n A_n \left( \delta A_n\left( 1 - \frac{\delta\inv\theta}{\Mtilde(R_n)} \right) \right)^{-k_n-1} \\
  &\geq \frac{\sqrt{k_n}}{R_n}\delta\inv e^{\frac{\delta\inv k_n}{\Mtilde(R_n)}} \\
  &= \frac{1}{R_n} \delta^{-\frac{1}{2}} \sqrt{\MKK{\Ktilde''}(R_n)} \Ktilde''(R_n)^{\frac{\theta M(R_n)}{\Mtilde(R_n)}}
 \end{align*}
 Clearly, (\ref{eq:K0vsK}) is equivalent to $\Ktilde''(s)\geq K_1(s)^{1/c_1}$ for all $s\geq R_0$ if $R_0$ is chosen large enough. Thus, choosing $c,\ep_0>0$ sufficiently small yields the claim.
\end{proof}

Since $\MKtilde$ is unbounded, it is clear that for the particular $f$ from the above proposition we will never get an essentially faster decay rate in the conclusion of Theorem \ref{thm:main} if we replace $M$ by $\Mtilde$ and $K$ by the smallest continuous non-decreasing function which bounds the left-hand side of (\ref{eq:Mtilde_worse}) from above. This was of course already clear from Theorem \ref{thm:optimality} since $f$ was constructed to show the optimality of Theorem \ref{thm:main}. However, Proposition \ref{prp:Mtilde_worse} shows even more. If $\Mtilde$, is chosen sufficiently large, relative to $M$, the decay rate obtained by Theorem \ref{thm:main} gets significantly worse if we use $\Mtilde$ instead of $M$. We illustrate this behaviour by two specific examples.
\begin{ex}\label{ex:take_M_large}
 Let $f:\R_+\to\C$ be the function constructed in Proposition \ref{prp:Mtilde_worse}.

 (a) Assume that $M=1$ and $K(s)=e^{s^\alpha},s\geq0$ for some $\alpha>0$. Theorem \ref{thm:main} implies a decay rate of the form $t^{-1/\alpha}$, which is optimal by construction of $f$. Now let $\Mtilde(s)=1\vee s^{\alpha},s\geq0$. If we apply Theorem \ref{thm:main} with $M$ replaced by $\Mtilde$, because of the polynomial lower bound (\ref{eq:Mtilde_worse}) which is of the form $(1\vee s^{\alpha/2})\log(e\vee s)^{1/2}$, the best decay rate we can hope to deduce is of the form $(t/\log(e\vee t))^{-1/\alpha}$. That is, we get a logarithmic loss in that situation. Recall also, that there are functions as in Theorem \ref{thm:optimality} (or \cite[Theorem 3.8]{BoTo10}) corresponding to the choices $M=\Mtilde$ and $K(s)=1\vee s^{\alpha/2+\ep}$ (any $\ep>0$ allowed), which do not decay faster than $(t/\log(t))^{-1/\alpha}$. Actually, one can show with the help of the Phragm\'en-Lindel\"of theorem, that any function $f$ satisfying the hypothesis of Theorem \ref{thm:optimality} (ignoring the part with the logarithmic singularity) with $M=1$ and $K(s)=e^{s^\alpha},s\geq0$ is bounded by $\Ktilde(s)=C(1\vee s^{\alpha})$ on $\Omega_{\Mtilde}$ (see e.g. \cite[Proposition 1]{BoPe06}). However, by the above argumentation this is not sufficient to avoid the logarithmic loss. We do not know if one can improve the argument from \cite{BoPe06} to allow a $\Ktilde$ with $(1\vee s^{\alpha/2})\log(e\vee s)^{1/2}\lesssim\Ktilde(s)\lesssim 1\vee s^{\alpha/2+\ep}$ for any $\ep>0$. We do not think that this is possible.
 
 (b) Let $M=1$, $K(s)=1\vee s^\alpha$ for some $\alpha>0$. Theorem \ref{thm:main} yields a decay rate of the form $e^{-t/(1+\alpha)}$, which is optimal in the sense that for any $\ep>0$ one can choose $f$ in such a way that it does not decay faster than $e^{-t(1+\ep)/(1+\alpha)}$ if in addition $\alpha$ is large enough, depending on how small $\ep$ is. Let $\Mtilde(s)=\log(e\vee s),s\geq0$. By (\ref{eq:Mtilde_worse}) the best bound for $\abs{z\fhat(z)}$ on $\Omega_{\Mtilde}$, or on the imaginary axis, we can hope for is essentially given by $\log(e\vee \abs{\Im z})^{1/2}$. Therefore, with this choice of $\Mtilde$ we can use Theorem \ref{thm:main} to deduce a decay rate merely of the form $e^{-(ct)^{1/2}}$, for some $c>0$. This is a very dramatic loss compared to the actual decay rate! Again, this loss is essentially unavoidable. Indeed, by \cite[Example 4.5 (a)]{RoSeSt17} one can construct a normal semigroup on a Hilbert space, which is bounded by $2^{-1}\log(e\vee s)^{1/2}$ along the imaginary axis (and thus, up to a factor by the same bound also in $\Omega_{\Mtilde}$) but which decays (precisely) like $e^{-(ct)^{2/3}}$ for $c=3\sqrt{3}/2$.
\end{ex}

In the next theorem we prove the optimality of Corollary \ref{cor:main_C0_strip} in certain situations. If we compare (\ref{eq:opt_f_rate}) with (\ref{eq:optimality_C0_rate}) below we see that in Theorem \ref{thm:optimality_C0} we are able to replace the limes superior by a limes inferior. 

\begin{thm}\label{thm:optimality_C0}
 Let $K,M:\R_+\to(0,\infty)$ be continuous non-decreasing functions. Assume (H1'), (H2) and that for some $\alpha>0, \beta>1/2$
 \begin{align}\label{eq:nec_for_opt_C0}
  \liminf_{s\to\infty} s^{-\alpha} M(s)^{-\beta} K(s) = \infty .
 \end{align}
 For any $c_1 > \cab$ (see (\ref{eq:abbr_cab})) there exists a generator $A$ of a semigroup $T$ such that $\norm{(z-A)\inv}\leq \Chat K(\abs{\Im z}), z\in\Omega_M$ for some $\Chat>0$ and \begin{align}\label{eq:optimality_C0_rate}
  \liminf_{t\to\infty} \MKK{K_1}\inv(c_1t) \norm{T(t)A\inv}>0 .
 \end{align}
\end{thm}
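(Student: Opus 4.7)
The proof parallels Theorem \ref{thm:optimality}, lifting the scalar construction to a semigroup on a Hilbert space. The new feature here, upgrading $\limsup$ to $\liminf$, comes from the fact that $t\mapsto\norm{T(t)A\inv}$ is controlled from below (in an essentially continuous manner) by successive ``block contributions'' of order $R_n\inv$, so that a dense enough choice of frequencies closes the gaps between the discrete peak-times of the scalar example.

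\textbf{Construction.} Retain the notation $(R_n),(k_n),(\tau_n),(A_n)$, and the support points $z_{n,j}=w_n+A_n\inv q^j$ from the proofs of Lemma \ref{lem:optimality} and Proposition \ref{prp:optimality}, chosen as in the proof of Theorem \ref{thm:optimality}. Let $X=\bigoplus_n X_n$ be a Hilbert-space direct sum of $(k_n+1)$-dimensional blocks and define $A=\bigoplus_n A^{(n)}$, where each $A^{(n)}$ has spectrum $\{z_{n,j}\}_{j=0}^{k_n}$. Since $\Re z_{n,j}\leq-\delta+A_n\inv<0$, $A$ generates a bounded $C_0$-semigroup $T$. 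The key point is that a diagonal (normal) choice of $A^{(n)}$ produces a semigroup decaying exponentially at rate $\delta$, which overshoots the target subexponential lower bound; so we instead take $A^{(n)}$ to be non-normal, so that there exist unit vectors $v_n,w_n\in X_n$ with
\begin{equation*}
 \dual{(z-A^{(n)})\inv v_n}{w_n}\approx \CT\mu_n(z), \qquad \dual{T(t)(A^{(n)})\inv v_n}{w_n}\approx \LT\mu_n(t)/R_n.
\end{equation*}
A companion-matrix-type operator built from $\prod_j(X-z_{n,j})$ in a carefully scaled basis is a natural candidate.

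\textbf{Extracting the bounds.} Equation \eqref{eq: optimality 1} from Lemma \ref{lem:optimality} translates into the resolvent estimate $\norm{(z-A)\inv}\leq\Chat K(\abs{\Im z})$ on $\Omega_M$ via the essentially-disjoint structure of the blocks on the scale $\Im z\in[R_n-2\delta,R_n+2\delta]$, while \eqref{eq: optimality 4} gives $\norm{T(t_n)A\inv}\geq c/R_n=c/\MKK{K_1}\inv(c_1t_n)$ at the times $t_n=k_n/\delta$. To upgrade this to a $\liminf$, we choose the sequence $(R_n)$ dense enough so that the ratios $t_{n+1}/t_n$ are bounded by a constant depending on $c_1/\cab$; combined with a Lipschitz-in-$t$ estimate for the orbit $T(t)(A^{(n)})\inv v_n$ on the scale $R_n\inv$, this lifts the pointwise lower bound at $(t_n)$ to a uniform lower bound on intervals covering all sufficiently large $t$, at the possible cost of a mildly larger $c_1$.

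\textbf{Main obstacle.} The principal difficulty is matching the scalar estimates of Lemma \ref{lem:optimality} by operator-theoretic analogues on each block: one must construct a finite-dimensional non-normal operator $A^{(n)}$ whose resolvent and semigroup, paired against specific vectors, simultaneously realize the delicate cancellation/amplification patterns encoded in the $q^j$-rotations of $\mu_n$. This cancellation structure, immediate at the level of scalar formulas, must be engineered via off-diagonal coupling at the operator level; verifying that a single choice of $A^{(n)}$ achieves both the upper resolvent bound and the lower semigroup bound with constants uniform in $n$, and that the resulting direct sum assembles into a bounded $C_0$-semigroup on $X$, is the technical heart of the argument.
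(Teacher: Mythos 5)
Your proposal takes a genuinely different route from the paper, but it has real gaps at exactly the two places you flag and gloss over, and the paper's strategy exists precisely to avoid engineering the direct-sum-of-blocks operator you propose.

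The paper follows the Batty--Borichev--Tomilov pattern (\cite[Theorem 7.1]{BaBoTo16}): it sets
\begin{equation*}
 X=\{f\in\buc;\ \exists C>0:\ \abs{\fhat(z)}\leq CK(\abs{\Im z}),\ z\in\Omega_M\},\qquad \norm{f}=\norm{f}_\infty+\inf C,
\end{equation*}
takes $T$ to be the left shift semigroup on $X$, and notes that the required resolvent bound $\norm{(z-A)\inv}\leq\Chat K(\abs{\Im z})$ on $\Omega_M$ is then essentially automatic. For each large $k$ one plugs in the single scalar function $f_k=\LT\mu_k$, checks $f_k,f_k'\in X$ with $A\inv f_k'=f_k$ and $(f_k),(f_k')$ norm-bounded, and reads off $\norm{T(t_k)A\inv f_k'}\geq\abs{f_k(t_k)}\geq c/R_k$ from \eqref{eq: optimality 4} and \eqref{eq:K0vsK}. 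The key structural advantage is that \emph{one shift semigroup works for every $k$ simultaneously}: the $\mu_k$ never need to coexist inside a single operator. The $\liminf$ is obtained by shrinking $c_1$ slightly and observing that $\{(1-\ep)t_k;k\geq k_0,\ep\in[0,\ep_1]\}$ eventually covers an interval $[a,\infty)$.

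Your Hilbert-space construction stumbles at exactly the two points the shift trick sidesteps. First, your claim that \eqref{eq: optimality 1} ``translates into'' $\norm{(z-A)\inv}\leq\Chat K(\abs{\Im z})$ via block disjointness is not justified: \eqref{eq: optimality 1} bounds only the scalar $\abs{\CT\mu_n(z)}$, which under your proposed correspondence is a single matrix element $\dual{(z-A^{(n)})\inv v_n}{w_n}$. Controlling one bilinear form gives no control on $\norm{(z-A^{(n)})\inv}$, and the non-normality you deliberately introduce (to escape the exponential decay a normal $A^{(n)}$ would force) is exactly what can make the operator norm of the resolvent far larger than that matrix element. You identify this as ``the technical heart'' but do not carry it out; as written there is no operator and no bound.

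Second, the mechanism you propose for upgrading $\limsup$ to $\liminf$ --- densify the sequence $(R_n)$ --- conflicts with the very block structure you rely on. The ``essentially disjoint'' resolvent bound requires the spectral windows $[R_n-2\delta,R_n+2\delta]$ (and the supports $[k_n/2\delta,2k_n/\delta]$, cf.\ the proof of Proposition \ref{prp:optimality}) to be pairwise disjoint, which forbids taking $t_{n+1}/t_n$ close to $1$. The paper's proof has no such tension because there is no block decomposition of $A$: the different $\mu_k$ act as different test vectors in the \emph{same} shift semigroup, and the $\liminf$ comes from varying $c_1$ rather than from crowding the frequencies.
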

\begin{proof}[Sketch of the proof]
 The first part of the proof is very close to the proof of \cite[Theorem 7.1]{BaBoTo16}. Therefore we only sketch it here and refer the reader to the paper of Batty, Borichev and Tomilov for the details. By $\buc$ let us denote the space of bounded and uniformly continuous functions on $\R_+$. Let us define the Banach space
 \begin{align*}
  X = \{f\in\buc; \exists C>0: \abs{\fhat(z)}\leq C K(\abs{\Im z}), z\in\Omega_M\}.
 \end{align*}
 We define the norm on this space to be $\norm{f}=\norm{f}_{\infty}+\inf C$, where the infimum ranges over all $C$ from the definition of the space $X$. Let us denote by $T$ the left shift semigroup on $X$. Following the lines of the proof of \cite[Theorem 7.1]{BaBoTo16} one can easily show that $T$ is bounded and that the resolvent satisfies the required estimate on $\Omega_M$. 

 Let $\ep=1$ and fix $\delta,t_0$, $k_0$ from Lemma \ref{lem:optimality}. We may assume that $k_0/\delta\geq t_0$. Let $\Ktilde'(s)=\sup_{s'\leq s}(M(0\vee (s'-2\delta))^{-1/2} K(0\vee(s'-2\delta)))^{1/\gamma}$ for a $\gamma\in(1,c_1/\cab)$. We want to use the lemma with $\Ktilde$ replaced by $\Ktilde'$. Note that the shift by $2\delta$ has no effect on $\MKK{\Ktilde'}\inv$ up to asymptotic equivalence. Recall that in the proof of Theorem \ref{thm:optimality} we used the lemma - indirectly via using Proposition \ref{prp:optimality} - with precisely this choice of $\Ktilde=\Ktilde'$. As in the proof of Theorem \ref{thm:optimality} we may use (\ref{eq:nec_for_opt_C0}) to show
 \begin{align}\label{eq:bla}
  \abs{\CT\mu_k(z)} \leq C(1+\abs{z})\inv K(\abs{\Im z}), z\in\Omega_M
 \end{align}
 for some $C$ not depending on $k$. Let $f_k=\LT\mu_k$. Clearly $f_k'=\LT'\mu_k$ and $\fhat_k=\CT\mu_k$. Since $f'_k$ is uniformly continuous (and bounded) $f_k,f_k'\in X$ and $A\inv f_k' = \int_0^\infty T(s) f_k' ds = f_k$. Here we also use that $\lim_{t\to\infty} f_k(t)=0$, by the Ingham-Karamata theorem since the Laplace transform of $f_k$ extends continuously to the imaginary axis (see e.g. \cite[Theorem 1.1]{ChSe16}). Actually, the same theorem implies that all functions from $X$ decay to zero at infinity. Note that by (\ref{eq:bla}), (\ref{eq: optimality 2}) and (\ref{eq: optimality 3}) the sequences $(f_k),(f'_k)$ are bounded in $X$. For $t_k=k/\delta$ with $k\geq k_0$ we deduce from (\ref{eq: optimality 4}) and (\ref{eq:K0vsK}) the existence of a constant $c>0$ not depending on $k$ such that
 \begin{align*}
  \norm{T(t_k)A\inv f_k'} \geq \norm{T(t_k)f_k}_{\infty} \geq \abs{f_k(t_k)} \geq \frac{c}{\MKK{K_1}\inv(c_1 t_k)} .
 \end{align*}
 Since $c_1>\cab$ was arbitrary, the same estimate also holds for $c_1$ replaced by $(1-\ep_1)c_1$, for a sufficiently small $\ep_1>0$ and for possibly different choices of $\delta,t_0,k_0$. Now (\ref{eq:optimality_C0_rate}) follows from the fact that $\{(1-\ep)t_k; k\geq k_0, \ep\in[0,\ep_1]\}$ contains an interval $[a,\infty)$ for a sufficiently large $a$.
\end{proof}

If (\ref{eq:nec_for_opt_C0}) is satisfied for \emph{all} $\alpha>0,\beta>1/2$ the above Theorem settles the question of optimality of Corollaries \ref{cor:main} and \ref{cor:main_C0_strip} almost entirely, since then $c_1>1$ is the only constraint on $c_1$. It would be desirable to prove the optimality of Corollary \ref{cor:main_C0} in the same spirit. More precisely, we ask whether there is a (reasonably large) class of (sub-polynomially growing) functions $M$ for which (\ref{eq:C0_decay}) is false if $c>1$ was allowed. Recall from the end of Section \ref{sec:Loc_dec_C0-SGs}, that in case of $M(s)=\log(e\vee s)^\alpha,s\geq 0$ with $\alpha>0$ there are $C_0$-semigroups for which (\ref{eq:C0_decay}) is false for any $c > \alpha^{-\alpha}(1+\alpha)^{1+\alpha} > 1$. These examples show that in general, $c>1$ is not allowed in (\ref{eq:C0_rate}). However, a positive answer to our question would be an even more striking result concerning optimality of Corollary \ref{cor:main_C0}. Unfortunately, Lemma \ref{lem:optimality} seems to be too weak to answer this question positively.
 

 \section{Application to a wave equation on an exterior domain}\label{sec:waves_ext_dom}

Let $\Omega\subsetneqq\R^d$ be a connected open set with bounded complement and non-empty $C^{\infty}$-boundary. The dimension $d$ is assumed to be at least $2$. We consider the wave equation on this domain:
\begin{align}\label{eq: exterior wave equation}
 \left\{
 \begin{array}{lr}
  u_{tt}(t,x) - \Delta u(t,x) = 0 &  (t\in(0,\infty), x\in\Omega), \\
  u(t,x) = 0 & (t\in(0,\infty), x\in\partial\Omega), \\
  u(0,x) = u_0(x), u_t(0,x) = u_1(x) & (x\in\Omega) .
  \end{array}
 \right.
\end{align}
Let us fix a radius $\rho>0$ such that the obstacle $\obstacle=\R^d\backslash\Omega$ is included in the open ball $B_{\rho}$ of radius $\rho$ and center $0$. We define a \emph{state} (at time $t$) of the system by $\xnice(t):=(u,v)(t):=(u(t),u_t(t))$. We define the local energy of a state by
\begin{align}\label{eq: local energy}
 \Eloc(\xnice) = \int_{\Omega\cap B_\rho} \absbig{\nabla u}^2 + \absbig{v}^2 dx .
\end{align}
Clearly, equation (\ref{eq: local energy}) is well defined for all $u\in C_c^{\infty}(\Omega)$ and $v\in L^2(\Omega)$. Therefore, it is also well defined on the \emph{energy space}
\begin{align}\nonumber
 \Ho = H_D^1(\Omega) \times L^2(\Omega),
\end{align}
where $H_D^1(\Omega)$ is the completion of $C_c^{\infty}(\Omega)$ with respect to the norm $(\int_{\Omega}\abs{\nabla u}^2)^{1/2}$.  We remark at this point that for any compactly supported $C^{\infty}$-function $\chi:\R^d\rightarrow\C$ the corresponding multiplication operator $f\mapsto \chi f$ is continuous from $H^1_D(\Omega)$ to $H^1_D(\Omega)$ and $L^2(\Omega)$. This is not completely obvious since $H^1_D(\Omega)$ is not a subspace of $L^2(\Omega)$ and actually the statement would be false if $\partial\Omega=\emptyset$. Fortunately we have assumed $\partial\Omega\neq\emptyset,\partial\Omega\in C^\infty$ and therefore the statement follows from the Poincar\'{e}-Steklov inequality applied to the open set $\Omega\cap B_r$ where the radius $r>0$ is chosen so large that $\Omega\cap B_r\neq\emptyset$ is connected and the support of $\chi$ is contained in $B_r$.

Let $m\in\N_0$. We are interested in the uniform decay rate of the local energy with respect to sufficiently smooth initial data, compactly supported in the ball of radius $\rho$:
\begin{align}\label{eq: pm}
 p_m(t) := \sup \left\{ \left( \frac{\Eloc(\xnice(t))}{\normbig{\xnice_0}^2_{H^{m+1}\times H^m}} \right)^{\frac{1}{2}} ; \xnice_0\in \Hcomp{m+1}\times \Hcomp{m}(\Omega\cap B_{\rho}) \right\}.
\end{align}
Here, by $\Hcomp{m}(\Omega\cap B_{\rho})$ we denote all square-integrable functions, \emph{compactly} supported on $\Omega\cap B_{\rho}$ for which all weak derivatives up to order $m$ are square-integrable too. We also write $\Ltwocomp=\Hcomp{0}$. It is well known that $p_0$ either does not decay to zero, or decays exponentially for $d$ odd and like $t^{-d}$ for $d$ even. Moreover, the decay can be characterized by boundedness of the local resolvent of $\A$ on the imaginary axis. We refer to \cite{Vo99} and references therein for these facts.

\subsection{The associated unitary \texorpdfstring{$C_0$}{C0}-group, its generator and basic properties of the truncated outgoing resolvent}
The wave equation (\ref{eq: exterior wave equation}) on the energy space $\Ho$ can be reformulated in the language of $C_0$-semigroups. Therefore, as above, we set $\xnice(t)=(u(t),u_t(t))$, $\xnice_0=(u_0,u_1)$ and write
\begin{align}\label{eq: exterior CP}
  \left\{
  \begin{aligned}
  \dot{\xnice}(t) = \A \xnice(t) , \\
  \xnice(0) = \xnice_0 \in \Ho  ,
 \end{aligned}
 \right.
\end{align}
where
\begin{align*}
  \A = \left(
 \begin{array}{cc}
  0       & 1 \\
  \Delta  & 0     \\
 \end{array}\right)
 \text{ with }
 D(\A) = D_\Delta \times (\underbrace{H^1_D\cap L^2}_{H^1_0})(\Omega) .
\end{align*}
Here $D_\Delta=\{u\in H^1_D(\Omega); \Delta u\in L^2(\Omega)\}$, where $\Delta$ denotes the Laplace operator in the sense of distributions. It can be proved that the wave operator $\A$ is skew-adjoint (see e.g. \cite[Theorem V.1.2]{LaPh1967}). Therefore the following theorem follows by Stone's theorem (see e.g. \cite[Appendix 1, Theorem 2]{LaPh1967}).
\begin{thm}
 The operator $\A$ generates a unitary $C_0$-group on $\Ho$.
\end{thm}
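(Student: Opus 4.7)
The statement is essentially a direct consequence of Stone's theorem once skew-adjointness of $\A$ is established, so the plan is to verify the hypotheses of Stone's theorem.

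First, I would check that $D(\A)$ is dense in $\Ho$. Since $C_c^\infty(\Omega)\times C_c^\infty(\Omega)$ is contained in $D(\A)$ and is dense in $H^1_D(\Omega)\times L^2(\Omega)=\Ho$ by the very definition of $H^1_D$, density is immediate. Next, I would verify skew-symmetry of $\A$, that is, $\dual{\A\xnice}{\ynice}_\Ho=-\dual{\xnice}{\A\ynice}_\Ho$ for all $\xnice=(u_1,v_1), \ynice=(u_2,v_2)\in D(\A)$. With the natural inner product on $\Ho$ this amounts to an integration by parts in the term $\int_\Omega\Delta u_1\cdot\overline{v_2}$, using that $v_2\in H^1_0(\Omega)$ so that the boundary terms on $\partial\Omega$ vanish, and using that $v_2$ is integrable against $\Delta u_1$ because $\Delta u_1\in L^2$ and $v_2\in L^2$. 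The case of the infinite boundary at infinity does not cause issues because approximation of $v_2$ by test functions is available via the definition of $H^1_0$.

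The real work is to show that $\A$ is in fact skew-adjoint, not merely skew-symmetric. By standard theory this reduces to showing that $\mathrm{Ran}(\A-I)=\mathrm{Ran}(\A+I)=\Ho$. Unravelling the definitions, surjectivity of $I-\A$ amounts to the following: for every $(f,g)\in H^1_D(\Omega)\times L^2(\Omega)$ find $u\in D_\Delta$ with $v:=u-f\in H^1_0(\Omega)$ such that $(1-\Delta)u=f+g$ in the distributional sense. Setting $u=f+w$ with the new unknown $w\in H^1_0(\Omega)$, the problem becomes the variational equation
\begin{equation*}
 \int_\Omega\nabla w\cdot\overline{\nabla\varphi}+w\,\overline{\varphi}\,dx=\int_\Omega g\,\overline{\varphi}-\nabla f\cdot\overline{\nabla\varphi}\,dx\qquad(\varphi\in H^1_0(\Omega)),
\end{equation*}
which is solvable by the Lax--Milgram lemma on the Hilbert space $H^1_0(\Omega)$ equipped with its graph norm, since the right-hand side defines a continuous antilinear functional on $H^1_0(\Omega)$ and the bilinear form is coercive. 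A small verification then shows that the resulting $u=f+w$ lies in $D_\Delta$ and $v=w\in H^1_0(\Omega)$, so $(u,v)\in D(\A)$. The surjectivity of $I+\A$ is analogous.

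The main technical obstacle is the one just mentioned: because $\Omega$ is an exterior domain, $H^1_D(\Omega)$ is not contained in $L^2(\Omega)$, so one must carefully decompose $u=f+w$ with $w\in H^1_0(\Omega)\subset L^2(\Omega)$ before invoking Lax--Milgram; the coercivity of the form $(w,\varphi)\mapsto\int\nabla w\cdot\overline{\nabla\varphi}+w\overline{\varphi}$ on $H^1_0(\Omega)$ with its natural norm is what saves the day. Once skew-adjointness is in hand, Stone's theorem (in the form cited from Lax--Phillips) yields that $\A$ is the generator of a unitary $C_0$-group on $\Ho$, which is the claim.
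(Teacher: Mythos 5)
Your proof is correct and fills in exactly the standard argument that the paper delegates to the cited references: skew-symmetry by integration by parts (two applications, one for each cross term, both resting on the $H^1_0$-membership of the second component), skew-adjointness via surjectivity of $I\pm\A$ established with the decomposition $u=f+w$ and Lax--Milgram on $H^1_0(\Omega)$, and then Stone's theorem. The paper itself proves nothing here — it cites Lax--Phillips for skew-adjointness and for Stone's theorem — so your proposal is a faithful expansion of that reference rather than a different route.
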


In the following we investigate the resolvent of $\A$ to get decay rates $p_m$ for the local energy. In the literature on local energy decay it is common to investigate the \emph{outgoing resolvent} of the stationary wave equation. For $\Re z>0$ and $f\in L^2(\Omega)$ the outgoing resolvent is defined as the Laplace transform
\begin{align*}
 R(z)f = \int_0^{\infty} e^{-zt} u(t) dt
\end{align*}
where $u$ is the first component of the solution to (\ref{eq: exterior CP}) for $\xnice_0=(0,f)\in\Ho$. Taking the Laplace transform of (\ref{eq: exterior CP}) it is not difficult to show that $w=R(z)f$ for $\Re z>0$ and $f\in L^2(\Omega)$ is the unique distributional solution in $L^2(\Omega)$ to the stationary wave equation
\begin{align}\label{eq: stationary exterior wave equation}
 \left\{
 \begin{array}{lr}
  z^2 w(x) - \Delta w(x) = f(x) &  (x\in\Omega), \\
  w(x) = 0 & (x\in\partial\Omega). \\
  \end{array}
 \right.
\end{align}
That is, $R(z)=(z^2-\Delta_0)^{-1}$ where by $\Delta_0$ we denote the Dirichlet-Laplace operator with domain $D(\Delta_0)=\{u\in H^1_0(\Omega); \Delta u\in L^2(\Omega)\}$. We emphasize that $D(\Delta_0)\neq D_\Delta$. There is an important relation between $R$ and the resolvent of $\A$: For $\Re z > 0$ we have
\begin{align}\label{eq: resolvents R vs A}
    (z-\A)^{-1} = \left(
 \begin{array}{cc}
  zR(z)       & R(z)   \\
  z^2R(z)-1   & zR(z)  \\
 \end{array}\right) .
\end{align}
Let us fix a cut-off function $\chi\in C_c^{\infty}(\R^d)$ with $0\leq \chi \leq 1$ such that $\chi=1$ on a neighbourhood of $\obstacle$. We define the truncated resolvent by $R_{\chi}(z)=\chi R(z) \chi$, where we consider $\chi$ as a multiplication operator on $L^2(\Omega)$. From the definition we see that the outgoing truncated resolvent is an analytic function on $\C_+$. The next proposition illuminates its behaviour on the other half of the complex plane.
\begin{prp}\label{prp:Burq_LaxPhillips_Vodev}
 (i)\text{\cite[Appendix B]{Bu98}} The truncated outgoing resolvent $R_{\chi}$ extends analytically to a neighbourhood of $i\R\backslash\{0\}$. Moreover, for any open sector $S\supseteq \R_-$ with vertex at $0$ the operator $R_{\chi}(z):L^2(\Omega)\rightarrow L^2(\Omega)$ is uniformly bounded for $z$ in a small neighbourhood of $0$ outside the sector $S$. (ii)\cite[Corollary V.3.3 together with Remark V.4.3]{LaPh1967} If the dimension $d\geq 3$ is odd, $R_{\chi}$ extends meromorphically to $\C$. (iii)\cite[Proposition 3.1]{Vo99} If the dimension $d\geq 2$ is even, then $R_{\chi}$ extends meromorphically to $\C\backslash\R_-$ and there exists a rank one operator $R_0$ such that
 \begin{align*}
  z\mapsto R_{\chi}(z) - R_0 z^{d-2}\log(z) \text{ is analytic}
 \end{align*}
 in a neighbourhood of $0$.
\end{prp}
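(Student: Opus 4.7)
The strategy is the classical one combining an explicit analysis of the free outgoing resolvent with a parametrix construction and the analytic Fredholm theorem. Throughout, let $R_{\mathrm{free}}(z)=(z^2-\Delta_{\R^d})^{-1}$ denote the outgoing resolvent on all of $\R^d$, initially defined for $\Re z>0$ and extended across $i\R$ via the radiation condition. The core observation is that $\chi R_{\mathrm{free}}(z)\chi$ has an explicit convolution kernel involving a Hankel function of order $(d-2)/2$ in the variable $iz|x-y|$. When $d$ is odd, this Hankel function is elementary (a polynomial in $1/(z|x-y|)$ times $e^{-z|x-y|}$), so $\chi R_{\mathrm{free}}(z)\chi$ extends to an entire family of compact operators on $L^2(\R^d)$. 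When $d$ is even, the small-argument asymptotics of $H^{(1)}_{(d-2)/2}$ show that $\chi R_{\mathrm{free}}(z)\chi$ extends meromorphically to $\C\backslash\R_-$, with the non-analytic part at zero being of exactly the form $R_0 z^{d-2}\log(z)$ for a rank-one operator $R_0$ (the rank arises because only the constant term in the Taylor expansion of $\chi$ against the log-type kernel contributes).

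Next, I would construct a parametrix that transfers this analytic structure to the exterior problem. Choose cutoffs $\chi_0,\chi_1\in C_c^\infty(\R^d)$ with $\chi_0=1$ near $\obstacle$, $\chi_1=1$ on $\spt\chi_0$ and $\chi=1$ on $\spt\chi_1$, and let $R_{\mathrm{in}}(z)$ be the Dirichlet resolvent of $z^2-\Delta$ on a large bounded region $\Omega\cap B_{\rho'}$ (meromorphic in $z\in\C$ as a family of compact operators). Form the ansatz
\begin{equation*}
 Q(z) = (1-\chi_1)R_{\mathrm{free}}(z)(1-\chi_0) + R_{\mathrm{in}}(z)\chi_0 .
\end{equation*}
A direct computation gives $(z^2-\Delta_0)Q(z)=\mathrm{Id}+K(z)$ with $K(z)$ a compactly supported (hence compact on $L^2$) family inheriting the analytic structure of $R_{\mathrm{free}}$. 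The analytic Fredholm theorem applied to $\mathrm{Id}+K(z)$, together with the fact that this family is invertible for large $\Re z$, then yields meromorphic continuation of $R(z)$ after localization by $\chi$: indeed, $R_\chi(z)=\chi Q(z)\chi(\mathrm{Id}+K(z))^{-1}\chi + (\text{corrections})$. This gives parts (ii) and (iii), including the precise form of the logarithmic singularity in (iii), since the singular term is carried entirely by $\chi R_{\mathrm{free}}(z)\chi$.

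For part (i), one must rule out poles on $i\R\backslash\{0\}$ of the meromorphic extension, and separately verify the uniform bound near $0$ in a sector avoiding $\R_-$. The absence of poles on the imaginary axis away from zero is equivalent to the non-existence of non-trivial outgoing solutions of $(\Delta+s^2)u=0$ in $\Omega$ with Dirichlet boundary condition and $u\in L^2_{\mathrm{loc}}$ satisfying Sommerfeld radiation; this follows from Rellich's uniqueness theorem together with boundary regularity near $\partial\Omega$. The behaviour near $z=0$ in the sector reduces, via the parametrix, to understanding $\chi R_{\mathrm{free}}(z)\chi$: for $d\geq 3$ the kernel is integrable against $\chi\otimes\chi$ uniformly as $z\to 0$ outside $\R_-$, while for $d=2$ the $\log z$ singularity is bounded on any such sector by a constant.

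The main obstacle, and the reason this is genuinely nontrivial, is the third step: producing the parametrix in a form where one can both identify the singular structure exactly (so that the log term in (iii) can be isolated as rank-one with the stated power $z^{d-2}$) and exclude imaginary-axis poles uniformly (so that (i) follows). The first difficulty is handled by careful bookkeeping of the Hankel asymptotics, the second by Rellich's theorem; both are standard but delicate, which is why we appeal to Burq, Lax--Phillips, and Vodev for the detailed execution.
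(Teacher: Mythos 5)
The paper offers no proof of this proposition; it is a pure citation to Burq, Lax--Phillips, and Vodev, so there is no in-text argument to compare against. Your outline --- free resolvent kernel via Bessel/Hankel asymptotics, parametrix glueing, analytic Fredholm theorem, Rellich uniqueness to exclude poles on $i\R\setminus\{0\}$ --- is indeed the classical route these references follow, and as a roadmap it is sound. However, there is a concrete error: you assert that ``for $d=2$ the $\log z$ singularity is bounded on any such sector by a constant.'' This is false, since $|\log z|\to\infty$ as $z\to 0$ along any sector. This is precisely the point at which parts (i) and (iii) are in apparent tension when $d=2$: if $R_0\neq 0$, then $R_\chi(z)=R_0\log z + (\text{analytic})$ would be unbounded near $0$, contradicting (i). A complete argument must show either that the Dirichlet boundary condition forces the relevant coefficient to vanish when $d=2$, or that the uniform bound in (i) is meant only for $d\geq 3$; your sketch papers over this instead of addressing it.

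Two smaller technical points. Your parametrix $Q(z)=(1-\chi_1)R_{\mathrm{free}}(z)(1-\chi_0)+R_{\mathrm{in}}(z)\chi_0$ is missing a cutoff on the left of $R_{\mathrm{in}}(z)$: the function $R_{\mathrm{in}}(z)\chi_0 f$ has a nonzero normal derivative on $\partial B_{\rho'}$, so extending it by zero and applying $z^2-\Delta$ produces a distribution supported on $\partial B_{\rho'}$. You need $\chi_2 R_{\mathrm{in}}(z)\chi_0$ with $\chi_2\in C_c^\infty(B_{\rho'})$ and $\chi_2\equiv 1$ on $\spt\chi_0$, so that $(z^2-\Delta)Q(z)=\mathrm{Id}+K(z)$ with $K(z)$ a genuinely compactly supported, compact error. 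Finally, the claim that ``the singular term is carried entirely by $\chi R_{\mathrm{free}}(z)\chi$'' is too optimistic: the $\log z$ singularity also enters through $(\mathrm{Id}+K(z))^{-1}$, and verifying that the leading singularity of the full $R_\chi$ is still exactly $R_0 z^{d-2}\log z$ with $R_0$ of rank one requires tracking how the logarithmic terms combine through the Fredholm inversion --- which is precisely the nontrivial content of Vodev's Proposition 3.1 that you are deferring to.
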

Since the spectrum of $\Delta_0$ is $(-\infty,0]$ the (maximal) domain of analyticity of the operator-valued function $R$ is the interior of $\C_+$. In particular, $R$ does not extend across the imaginary axis if we consider it as a $\Lin(L^2(\Omega))$-valued function. However, if we consider $R(z)$ as an operator $R(z):\Ltwocomp(\overline{\Omega})\rightarrow \Ltwoloc(\Omega)$, then the above proposition says that $R$, with these values, does extend across the imaginary axis. Moreover, if $f\in \Ltwocomp(\overline{\Omega})$ and $z\in\C$ is such that $R(z)$ is defined, the function $w=R(z)f\in \Ltwoloc(\Omega)$ is a solution to (\ref{eq: stationary exterior wave equation}). For $\Re z<0$ the function $w$ thus defined is not necessarily in $L^2(\Omega)$ and in particular it need not be the unique $L^2$-solution of (\ref{eq: stationary exterior wave equation}). In other words, $R_{\chi}(z)\neq \chi (z^2-\Delta_0)^{-1}\chi$ if $\Re z<0$.

Let us define the analytic function $G_{\chi}: \C_+ \to \Lin(\Ho)$ by
\begin{align*}
 G_{\chi}(z) = \chi(z-\A)^{-1}\chi .
\end{align*}
Here, we consider $\chi$ as an operator on $\Ho$ acting as $\chi(u_0,u_1)=(\chi u_0, \chi u_1)$. In case $d\geq 3$ is odd, by Proposition \ref{prp:Burq_LaxPhillips_Vodev} together with (\ref{eq: resolvents R vs A}), we immediately see that $G$ extends to a meromorphic function on $\C$ which has no poles on $i\R$. If $d\geq 2$ is even, then $G_{\chi}$ extends to a meromorphic function on $\C_+\backslash\R_-$. Moreover, by Proposition \ref{prp:Burq_LaxPhillips_Vodev}(iii) together with (\ref{eq: resolvents R vs A}) (see also \cite[Remark 3.2]{Vo99}) there exists a finite rank operator $P_0$ such that
\begin{align}\label{eq:G_has_log-sing}
 z \mapsto G_{\chi}(z) - P_0 z^{d-1}\log(z) \text{ is analytic}
\end{align}
in a small ball around $0$. Since the spectrum of $\A$ is the entire imaginary axis (this follows from $\sigma(\Delta_0)=(-\infty,0]$) the equality $G_{\chi}(z)= \chi(z-\A)^{-1}\chi$ \emph{does not hold} for $\Re z<0$ in general. 
  
The following proposition seems to be well-known. Unfortunately we could not find a complete proof in the literature. Therefore we give a proof in the Appendix.  
\begin{prp}\label{prp:resolvent_A_vs_R}
 Let $\delta>0$ and let $\tilde{\chi}$ be defined as $\chi$ but with $\tilde{\chi}=1$ on a neighbourhood of the support of $\chi$. Let $z$ with $-\delta<\Re z < 0$ be no pole of $R_{\chi}$, then
 \begin{align*}
  \normbig{G_\chi(z)} \leq C \left( (1\vee\absbig{z})^{-1} + \absbig{z} \normbig{R_{\tilde{\chi}}(z)}_{L^2\rightarrow L^2} \right)
 \end{align*}
 with a constant $C>0$ independent of $z$. The reverse inequality - with a different constant, ignoring the first summand on the right hand side and $\tilde{\chi}$ replaced by $\chi$ - is also true.
\end{prp}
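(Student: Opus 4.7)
The plan is to unfold $G_\chi(z)=\chi(z-\A)^{-1}\chi$ by applying the matrix formula (\ref{eq: resolvents R vs A}), obtaining
\begin{equation*}
 G_\chi(z) = \begin{pmatrix} z R_\chi(z) & R_\chi(z) \\ z^2 R_\chi(z) - \chi^2 & z R_\chi(z) \end{pmatrix},
\end{equation*}
where $R_\chi(z):=\chi R(z)\chi$; this representation, valid for $\Re z > 0$, extends by analytic continuation to the strip $-\delta<\Re z<0$ (away from poles) via Proposition \ref{prp:Burq_LaxPhillips_Vodev}. Because $\tilde\chi\equiv 1$ on the support of $\chi$, one has the factorisation $R_\chi(z)=M_\chi R_{\tilde\chi}(z) M_\chi$, where $M_\phi$ denotes multiplication by $\phi$; in particular $\normbig{R_\chi(z)}_{L^2\to L^2}\leq \normbig{R_{\tilde\chi}(z)}_{L^2\to L^2}$.

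For $(u,v)\in\Ho$ I would set $(h_1,h_2):=G_\chi(z)(u,v)$, so that $h_1=\chi F$ and $h_2=z\chi F-\chi^2 u$, where $F:=R(z)(z\chi u+\chi v)$ is defined via its meromorphic extension so that $\chi F=\chi R_{\tilde\chi}(z)(z\chi u+\chi v)$. The $L^2$-norm of $h_2$ will be controlled by the operator norm of $R_{\tilde\chi}(z)$ on $L^2$ together with the Poincar\'{e}-Steklov inequality $\norm{\chi u}_{L^2}\leq C\norm{u}_{H^1_D}$ recalled earlier in the paper. For the $H^1_D$-norm of $h_1$ I plan to use a Caccioppoli-type interior elliptic estimate: pairing $(z^2-\Delta)F=z\chi u+\chi v$ against $\chi^2\bar F$, integrating by parts (using $F|_{\partial\Omega}=0$), and applying Young's inequality yields
\begin{equation*}
 \normbig{\chi\nabla F}_{L^2}^2 \leq C\left(\absbig{\Re(z^2)}\normbig{\tilde\chi F}_{L^2}^2 + \normbig{\tilde\chi F}_{L^2}\normbig{\tilde\chi g}_{L^2} + \normbig{\tilde\chi F}_{L^2}^2\right),
\end{equation*}
with $g=z\chi u+\chi v$. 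Substituting $\normbig{\tilde\chi F}_{L^2}\leq \normbig{R_{\tilde\chi}(z)}_{L^2\to L^2}\normbig{g}_{L^2}$ and $\normbig{g}_{L^2}\leq C(1+\absbig{z})\norm{(u,v)}_\Ho$ should then deliver the desired $\Ho$-bound on $h_1$; the summand $(1\vee\absbig{z})^{-1}$ in the claim will come from the uncancelled constant contribution $\chi^2 u$ in $h_2$ after normalising by the natural factor of $\absbig{z}$ attached to the second component.

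The main obstacle will be matching the \emph{sharp} power of $\absbig{z}$, namely extracting $\absbig{z}\normbig{R_{\tilde\chi}}$ rather than the naive $\absbig{z}^2\normbig{R_{\tilde\chi}}$ that a brute-force substitution produces in the $(2,1)$-entry $z^2 R_\chi-\chi^2$. I plan to exploit the algebraic identity $(z^2 R_\chi-\chi^2)a=\chi\Delta R(z)(\chi a)$ together with the commutator decomposition $\chi\Delta w=\Delta(\chi w)-2\nabla\chi\cdot\nabla w-w\Delta\chi$ (for $w=R(z)\chi a$), and apply standard elliptic regularity $\normbig{\chi w}_{H^2}\leq C(\normbig{\tilde\chi w}_{L^2}+\normbig{\tilde\chi \Delta w}_{L^2})$ on a chain of nested cut-offs to capture the cancellation between $z^2\chi w$ and $\chi^2 a$. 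The reverse inequality, finally, is immediate: evaluating $G_\chi(z)$ on $(0,v)$ with $v\in L^2$ gives $(R_\chi(z)v,\,zR_\chi(z)v)\in\Ho$, so reading off the $L^2$-component of the image yields $\normbig{G_\chi(z)}_{\Ho\to\Ho}\geq \absbig{z}\normbig{R_\chi(z)}_{L^2\to L^2}$, which is the claimed lower bound with $\tilde\chi$ replaced by $\chi$.
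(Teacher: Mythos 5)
Your reduction to the matrix form of $G_\chi$ via \eqref{eq: resolvents R vs A}, the factorisation $R_\chi(z)=\chi R_{\tilde\chi}(z)\chi$, and the treatment of the reverse inequality all agree with the paper. The Caccioppoli-type estimate you propose (test $(z^2-\Delta)F=g$ against $\chi^2\bar F$, integrate by parts, Young) is also exactly the paper's Step~1 and correctly gives $\normbig{R_\chi(z)}_{L^2\to H^1_D}\lesssim |z|^{-1}+|z|\normbig{R_{\tilde\chi}(z)}_{L^2\to L^2}$ for an $L^2$-source.

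However, there is a genuine gap, and you have in fact half-diagnosed it yourself. Feeding the combined source $g=z\chi u+\chi v$ into the Caccioppoli estimate yields $\normbig{\tilde\chi F}\leq\normbig{R_{\tilde\chi}}\norm{g}\lesssim(1+|z|)\normbig{R_{\tilde\chi}}\norm{(u,v)}_\Ho$, and since the Caccioppoli bound carries a prefactor $|\Re z^2|\asymp|z|^2$ in front of $\normbig{\tilde\chi F}^2$, one ends up with $\normbig{\chi\nabla F}\lesssim|z|^2\normbig{R_{\tilde\chi}}\norm{(u,v)}_\Ho$ for large $|z|$ --- a factor $|z|$ too much. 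The same over-count occurs in your plan for the $(2,1)$-entry: the interior elliptic estimate gives $\normbig{\chi w}_{H^2}\lesssim\normbig{\tilde\chi w}+\normbig{\tilde\chi\Delta w}$, but $\tilde\chi\Delta w=z^2\tilde\chi w-\chi a$ reintroduces $|z|^2\normbig{\tilde\chi w}$, so after substituting $\normbig{\tilde\chi w}\leq\normbig{R_{\tilde\chi}}\norm{a}_{H^1_D}$ you again land at $|z|^2\normbig{R_{\tilde\chi}}$ rather than $|z|\normbig{R_{\tilde\chi}}$. Playing with nested cut-offs does not change this power counting, because each application of the elliptic estimate trades a derivative for a factor $|z|^2$ against only one factor of $\normbig{R_{\tilde\chi}}$.

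The idea your proof is missing is the \emph{duality argument}: $R_\chi(z)^* = R_\chi(\bar z)$ (equation \eqref{eq: duality for R} in the paper), together with the commutator decomposition $z^2 R_\chi(z)=\chi^2+R_\chi(z)\Delta+\chi R(z)[\Delta,\chi]$. The $H^1_D\to H^1_D$ and $H^1_D\to L^2$ bounds on the entries then reduce to $\normbig{R_\chi(z)}_{H^{-1}_D\to L^2}$ and $\normbig{R_\chi(z)}_{H^{-1}_D\to H^1_D}$, which are obtained by \emph{transposing} the sharp $L^2\to H^1_D$ Caccioppoli estimate rather than iterating it. This transposition is what preserves the factor $|z|$ instead of squaring it, and is precisely the mechanism your elliptic-regularity proposal lacks. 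Without it the argument, as written, proves the weaker bound $\normbig{G_\chi(z)}\lesssim 1+|z|^2\normbig{R_{\tilde\chi}(z)}$, which is not strong enough to yield the growth condition needed in Theorem~\ref{thm:application}.
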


\subsection{Decay of the local energy}\label{sec: Decay of the local energy}
  
It can happen that a whole strip $\{ z\in\C ; -\delta < \Re z < 0 \}$ is free of poles of $G_{\chi}$ - see for instance \cite{Ik88}. In \cite{BoPe06} Bony and Petkov studied the impact of the presence of such a strip on local energy decay. There it was shown in a first step that such a strip implies that the norm of $G_{\chi}$ can be estimated by $C\exp(C\absbig{\Im(z)}^{\alpha})$ for large $z$ on this strip, and for some $\alpha>0$. Indeed $\alpha=d-1$ in this article but it was not shown that this is optimal. In a second step the authors showed that this implies a bound of the form $(1+\absbig{\Im z})^{\alpha}$ on $G_{\chi}$ for large arguments in a region of the form $\{ z\in\C; -c(1+\absbig{\Im z})^{-\alpha} < \Re z < 0 \}$. This step is rather abstract and relies only on the fact that $G_\chi$ is an analytic function on $\Omega_M\cup\C_+$, which is bounded from above by $C/\Re z$ on $\C_+$. Note that by Example \ref{ex:take_M_large} (a) the obtained bound on that region is (at least almost) the best bound one can hope for - under such general assumptions. Finally, in a third step they applied a Tauberian theorem (more precisely, \cite[Proposition 1.4]{PoVo99}) to get, for $d$ odd, a $(\log(t)/t)^{m/\alpha}$ decay rate for the local energy. If $d$ is even one gets a $t^{-d}\vee(\log(t)/t)^{m/\alpha}$ decay rate.
  
In the following we get rid of the logarithmic term, and simplify the proof compared to \cite{BoPe06}, by using a single application of Corollary \ref{cor:main} to the local resolvent on a strip. To present a more general result we consider the following conditions.
\begin{enumerate}
 \item[(a)] There is a continuous and non-decreasing function $M:\R_+\rightarrow(0,\infty)$ such that $R_{\chi}$ has no poles in $\Omega_M$.
 \item[(b)] $R_\chi$ extends analytically to $(\Omega_M\cup\C_+)\backslash\R_-$ and continuously to $(\cl{\Omega_M}\cup\C_+)\backslash\R_-$. Moreover, there is a real number $r_1>0$ and a continuous and non-decreasing function $K:\R_+\rightarrow(0,\infty)$ such that 
 \begin{align*}
  \absbig{\Im z} \normbig{R_{\chi}(z)}_{L^2\rightarrow L^2} \leq C K(\abs{\Im z})
 \end{align*}
 for all $z\in \Omega_M$ with $\absbig{\Im z} > r_1$.
 \item[(c)] There exists $a\in[0,\infty)$ such that $K$ has positive increase of order $a$.
\end{enumerate}
 Observe that $(c)$ is always satisfied for $a=0$ (by definition of positive increase of order $0$).
\begin{thm}\label{thm:application}
 Let $m\in\N_1$ and assume that the conditions (a-c) above are satisfied. (i) If $d\geq3$ is odd, there is a $C>0$ such that
 \begin{align}\nonumber
  p_m(t) \leq \frac{C}{\MKtilde^{-1}(t)^m} \text{ for every } t\geq M(0) .
 \end{align}
 (ii) If $d\geq2$ is even, there is a $C>0$ such that
 \begin{align}\nonumber
  p_m(t) \leq C \max \left\{\frac{1}{t^d}, \frac{1}{\MKtilde^{-1}(t)^m} \right\} \text{ for every } t\geq M(0) .
 \end{align}
 In both cases $\Ktilde=K_{(m-a)\vee1,\log}$. If $a>m-1$, we set $\Ktilde=K_1$.
\end{thm}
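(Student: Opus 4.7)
My plan is to derive both parts from Corollary \ref{cor:main} applied to the unitary group $T$ on $\Ho$ generated by $\A$, taking $P_1=P_2$ to be multiplication by a cutoff $\chi\in C_c^\infty(\R^d)$ which equals $1$ on a sufficiently large ball containing $\overline{B_\rho}$ together with the obstacle. The first step is to reduce $p_m(t)$ to a quantity accessible by the corollary: for $\xnice_0\in\Hcomp{m+1}\times\Hcomp{m}(\Omega\cap B_\rho)$ one has $\chi\xnice_0=\xnice_0$ and, since $\A$ is a local differential operator, the element $y:=(\omega-\A)^m\xnice_0$ remains supported in $B_\rho$ and satisfies $\normbig{y}_\Ho\leq C\norm{\xnice_0}_{H^{m+1}\times H^m}$. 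Writing $\xnice_0=(\omega-\A)^{-m}\chi y$ and estimating the local energy by multiplication with a slightly larger cutoff $\chi'$ yields $\Eloc(T(t)\xnice_0)^{1/2}\leq C\normbig{\chi' T(t)(\omega-\A)^{-m}\chi y}_\Ho$.

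Next I would verify the hypotheses of Corollary \ref{cor:main} applied to $G(z)=\chi'(z-\A)^{-1}\chi y$. Boundedness of $t\mapsto\chi' T(t)\chi y$ follows from unitarity of $T$. Analyticity and continuous extension of $G$ on $(\Omega_M\cup\C_+)\backslash\R_-$ follow from (a) together with Proposition \ref{prp:resolvent_A_vs_R}; that same proposition yields
\begin{equation*}
 \norm{G(z)}\leq C\bigl((1\vee|z|)^{-1}+|z|\normbig{R_{\tilde\chi}(z)}_{L^2\to L^2}\bigr)\leq CK(\absbig{\Im z}),
\end{equation*}
valid for $z\in\Omega_M$ with $\absbig{\Im z}>r_1$, where $\tilde\chi$ is a cutoff with $\tilde\chi=1$ on the supports of $\chi$ and $\chi'$ (condition (b) being insensitive to the specific cutoff up to multiplicative constants). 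Finally, the logarithmic-singularity structure is read off from (\ref{eq:G_has_log-sing}) and (\ref{eq: resolvents R vs A}): in odd dimensions $\Gtilde\equiv 0$, whereas in even dimensions $\Gtilde$ is analytic near zero with leading behaviour $P_0 z^{d-1}$.

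Applying Corollary \ref{cor:main} with this $G$, taking the supremum over unit-norm $\xnice_0$, yields for any $n\in\N_1$ and all sufficiently large $t$ an estimate of the form
\begin{equation*}
 p_m(t)\leq C\biggl(\normbig{\Gtilde_{m,\omega,n-1}\bigl(\tfrac{d}{dt}\bigr)t^{-1}}+\frac{\sup_{-r<s<0}\normbig{\Gtilde_{m,\omega}^{(n)}(s)}}{t^{n+1}}+\frac{1}{\MKtilde^{-1}(t)^m}\biggr).
\end{equation*}
In the odd case this immediately gives (i) since $\Gtilde=0$. In the even case, $\Gtilde_{m,\omega}(z)=(\omega-z)^{-m}\Gtilde(z)$ vanishes to order $d-1$ at $0$, so $\Gtilde_{m,\omega,n-1}(d/dt)t^{-1}$ is a finite linear combination of $t^{-j-1}$ for $j\in\{d-1,\ldots,n-1\}$ whose slowest-decaying term is of order $t^{-d}$. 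Choosing $n$ large enough that $t^{-n-1}$ is dominated by $t^{-d}$ yields the $\max\{t^{-d},\MKtilde^{-1}(t)^{-m}\}$ bound of (ii). The refined form of $\Ktilde$ under positive increase of order $a$ is inherited verbatim from Corollary \ref{cor:main}.

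The principal technical point is the clean execution of the cutoff reduction $\Eloc(T(t)\xnice_0)^{1/2}\lesssim\normbig{\chi' T(t)(\omega-\A)^{-m}\chi y}_\Ho$, which requires that $\A$ be local (so that $(\omega-\A)^m$ preserves the support condition), careful tracking of the three cutoffs $\chi,\chi',\tilde\chi$, and the observation that hypothesis (b) is cutoff-independent up to multiplicative constants. A secondary point is correctly identifying the $t^{-d}$ obstruction in even dimensions from the Taylor polynomial of $(\omega-z)^{-m}\Gtilde(z)$ at $0$, which is the abstract reflection of Vodev's observation that a logarithmic singularity of the outgoing resolvent precludes faster decay than $t^{-d}$.
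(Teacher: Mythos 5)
Your proof is correct, and at its core it follows the same strategy as the paper: apply Corollary \ref{cor:main} to the truncated resolvent $\chi(z-\A)^{-1}\chi$, supply the analytic extension and growth bound from conditions (a)--(c) via Proposition \ref{prp:resolvent_A_vs_R}, and read off the $t^{-d}$ obstruction in even dimensions from the Taylor coefficients of $z\mapsto(\omega-z)^{-m}\Gtilde(z)$ with $\Gtilde(z)=P_0 z^{d-1}$ as in (\ref{eq:G_has_log-sing}).

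The one genuine difference is how each argument reduces the quantity $p_m(t)$ to something to which the corollary applies. You exploit that $\A$ is a local differential operator: for compactly supported, sufficiently regular $\xnice_0$, the element $y=(\omega-\A)^m\xnice_0$ stays compactly supported in $\Omega\cap B_\rho$ with $\norm{y}_{\Ho}\lesssim\norm{\xnice_0}_{H^{m+1}\times H^m}$, and then you directly write $\Eloc(T(t)\xnice_0)^{1/2}\leq\norm{\chi T(t)(\omega-\A)^{-m}\chi y}_{\Ho}$. The paper instead proves (for $m=1$) a commutator estimate $\norm{\chi e^{t\A}\chi\xnice_1}\lesssim\MKtilde^{-1}(t)^{-1}\norm{\xnice_1}_{D(\A)}$ valid for \emph{all} $\xnice_1\in D(\A)$, using the boundedness of $[\chi,1-\A]$ and a second cutoff $\chi_1$, and then restricts to compactly supported $\xnice_1$ and invokes elliptic regularity of the Dirichlet Laplacian to identify $\norm{\cdot}_{D(\A)}$ with the $H^2\times H^1$ norm. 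When one restricts to $\xnice_1$ supported where $\chi\equiv1$, the commutator term vanishes and the paper's argument collapses to exactly your direct factoring, and only the \emph{easy} direction of the norm equivalence is actually used; so your route is a slight streamlining that also dispenses with the commutator bookkeeping for $m>1$. Two small points worth tidying up in a final write-up: you want a single operator-norm bound $\norm{\chi(z-\A)^{-1}\chi}_{\Lin(\Ho)}\leq CK(\abs{\Im z})$ so that the constants produced by Corollary \ref{cor:main} via Remark \ref{rem:Ctilde_in_main_C0_local} are uniform over unit-norm $y$ (the paper phrases this as a closed-graph-theorem remark); and the mismatch between ``take $P_1=P_2=\chi$'' and the later appearance of $\chi'$ should be resolved by simply taking a single $\chi$ equal to $1$ on $B_\rho$, as the paper does, which removes the need to worry about cutoff-independence of (b).
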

\begin{proof}
 (i) For $\Re z > 0$, let $G_{\chi}(z)=\chi (z-\A)^{-1} \chi$. Assumptions (a) and (b) together with Proposition \ref{prp:resolvent_A_vs_R} imply that $G_{\chi}$ extends analytically to $\Omega_M\cup \C_+$, and satisfies
 \begin{align}\nonumber
  \normbig{G_{\chi}(z)} \leq C K(\absbig{\Im z})  \text{ for } z\in \cl{\Omega_M} .
 \end{align}
 Thus, by Corollary \ref{cor:main}, for every $\xnice_0\in\Ho$ 
 \begin{align}
  \normbig{\chi e^{t\A} (1-\A)^{-m} \chi \xnice_0} \leq \frac{C}{\MKtilde^{-1}(t)^m} \normbig{\xnice_0} .
 \end{align}
 By the closed graph theorem the constant $C$ does not depend on $\xnice_0$. For simplicity we assume $m=1$ in the following. The general case can be treated in almost the same way.
   
 Let $\chi_1\in C_c^{\infty}(\R^d)$ be a function such that $0\leq \chi_1 \leq 1$ and $\chi_1=1$ on $\spt \chi$. Of course, Propositions \ref{prp:Burq_LaxPhillips_Vodev} and \ref{prp:resolvent_A_vs_R} remain valid if one replaces $\chi$ by $\chi_1$. Note that the commutator $[\chi,1-\A]$ is a bounded operator on $\Ho$. Let $\xnice_1=(1-\A)^{-1}\xnice_0\in D(\A)$. By Corollary \ref{cor:main},
 \begin{align*}
  \normbig{\chi e^{t\A} \chi \xnice_1} 
  &\leq \normbig{\chi e^{t\A} (1-\A)^{-1} \chi \xnice_0} + \normbig{\chi (\chi_1 e^{t\A} (1-\A)^{-1} \chi_1) [\chi,(1-\A)] \xnice_1} \\
  &\leq \frac{C}{\MKtilde^{-1}(t)} (\normbig{\xnice_0} + \normbig{\xnice_1}) \\ 
  &\leq \frac{C}{\MKtilde^{-1}(t)} \normbig{\xnice_1}_{D(\A)} .
 \end{align*}
Without loss of generality we may assume that $\chi=1$ on $B_\rho$. Observe that the norm of elements of $D(\A)$, supported in $\overline{\Omega}\cap B_\rho$, is equivalent to the norm in the space $H^2\times H^1(\Omega)$. This follows from maximal regularity of the Dirichlet-Laplace operator on the bounded and smooth domain $\Omega\cap B_\rho$. Thus the last inequality (restricted to those $\xnice_1$ with support in $B_\rho$) implies the conclusion of the theorem. 

(ii) The proof of the second assertion follows in exactly the same way, now using (\ref{eq:G_has_log-sing}) in addition.
\end{proof}

Let us go back to the situation described at the beginning of Section \ref{sec: Decay of the local energy}. We assume for simplicity of presentation that $d$ is odd. We see that we can apply the above theorem with $M=\delta\inv$ for some $\delta>0$ and $K(s)=C\exp(Cs^{\alpha})$ for some $\alpha>0$. Thus, we get
\begin{align*}
 p_m(t) \leq \frac{C}{t^{\frac{m}{\alpha}}} \text{ for } t\geq 1 .
\end{align*}
So our approach helps to remove the logarithmic loss in this situation. 

If $M=\delta\inv$, (at least) in some cases, it might be possible that the bound on the resolvent in (b) is given by a polynomial $K(s)=1\vee s^\alpha,s\geq0$ for some $\alpha\geq 0$. Even in this situation our result seems to be better than known results. In fact, Theorem \ref{thm:application} implies for any $c\in(0,1)$ a decay rate 
\begin{align*}
 p_m(t) \leq C e^{-cbt} \text{ for } t\geq 0,
\end{align*}
where $b=m\delta(m+\alpha-\alpha\wedge(m-1))\inv$. In case $\alpha>m-1$ the value $c=1$ is also allowed (i.e. $cb=\delta$). Therefore the obtained decay rate crucially depends on the admissible values for $c$ and on the concrete definition of $\Ktilde$. To the best of our knowledge our result gives the fastest decay rate in this situation. 

 
\begin{appendix}

\section{Proof of Proposition \ref{prp:resolvent_A_vs_R}}

 From (\ref{eq: resolvents R vs A}) we deduce that
 \begin{align}\label{eq: G vs R}
  G_{\chi}(z) = \left(
  \begin{array}{cc}
   zR_{\chi}(z)            & R_{\chi}(z)   \\
   z^2R_{\chi}(z)-\chi^2   & zR_{\chi}(z)  \\
  \end{array}\right) ,\,
  z^2R_{\chi}(z)-\chi^2 = \chi R(z) \Delta \chi .
 \end{align}
 Therefore the last statement of the proposition follows directly from
 \begin{align*}
  G_{\chi}(z)(0,g) = \left( R_{\chi}(z) g, z R_{\chi}(z) g \right).
 \end{align*}
 To prove the inequality displayed in the proposition we assume without loss of generality that $\absbig{z}\geq 1$. Furthermore we let $\chi_1$ be a function satisfying the same constraints as $\tilde{\chi}$ but with support contained in the interior of the set where $\tilde{\chi}$ is equal to $1$. Let $H^{-1}_D(\Omega)$ be the dual space of $H^1_D(\Omega)$. Clearly $\Delta:H^1_D(\Omega)\rightarrow H^{-1}_D(\Omega)$ is continuous. Furthermore the commutator $[\Delta,\chi]:H^1_{D}(\Omega)\rightarrow L^2(\Omega)$ is continuous too. This is not completely obvious since $[\Delta,\chi]=\nabla\chi\cdot\nabla + (\Delta\chi)$ has a zeroth order term. Fortunately, $\Delta \chi$ is compactly supported, $\partial\Omega\neq\emptyset, \partial\Omega\in C^\infty$ and therefore $\Delta\chi$ acts as a bounded operator on $H^1_D(\Omega)$ by the Poincar\'{e}-Steklov inequality for bounded domains. By the same reasoning we have already seen in Section \ref{sec:waves_ext_dom} that $\chi$ acts as a bounded operator on $H^1_D(\Omega)$. Before coming to the first estimates let us finally note that for all $z\in\C\backslash\R_-$ and $g\in L^2(\Omega)$ we have
 \begin{align}\label{eq: duality for R}
  R_{\chi}(z)^*g = R_{\chi}(\overline{z})g = \overline{R_{\chi}(z)\overline{g}}.
 \end{align}
 Here the bars mean the complex conjugate and $^*$ means the $L^2$-adjoint of an operator. If $z$ is a pole of $R_{\chi}$ this equality simply means that $\overline{z}$ is a pole too.
 
 Our goal is to verify the following estimates:
 \begin{align}\label{eq: R vs R I}
  \normbig{zR_{\chi}(z)}_{H^1_D\rightarrow H^1_D} &\lesssim \frac{1}{\absbig{z}} + \absbig{z}\normbig{R_{\tilde{\chi}}(z)}_{L^2\rightarrow L^2}, \\ \label{eq: R vs R II}
  \normbig{\chi R(z)\Delta\chi}_{H^1_D\rightarrow L^2} &\lesssim \frac{1}{\absbig{z}} + \absbig{z} \normbig{R_{\chi_1}(z)}_{L^2\rightarrow L^2}, \\ \label{eq: R vs R III}
  \normbig{R_{\chi}(z)}_{L^2\rightarrow H^1_D} &\lesssim \frac{1}{\absbig{z}} + \absbig{z}\normbig{R_{\chi_1}(z)}_{L^2\rightarrow L^2}.
 \end{align}
 By (\ref{eq: G vs R}) this implies the conclusion of the proposition.

 \textbf{Step 1.} Estimation of $\normbig{R_{\chi}(z)}_{L^2\rightarrow H^1_D}$. Let $f\in L^2(\Omega)$ and $u = R(z)\chi f$. Then, by Proposition \ref{prp:Burq_LaxPhillips_Vodev}, the $\Ltwoloc$-function $u$ is a distributional solution of
 \begin{align}\label{eq: aux equation R}
  \left\{
  \begin{array}{lr}
   z^2 u(x) - \Delta u(x) = \chi(x) f(x) &  (x\in\Omega), \\
   u(x) = 0 & (x\in\partial\Omega). \\
  \end{array}
  \right.
 \end{align}
 Testing the equation with $\chi \overline{u}$ leads after a short calculation, using integration by parts, to
 \begin{align*}
  \normbig{\chi \nabla u}_{L^2}^2 \lesssim \frac{1}{\absbig{z}^2} \normbig{\chi f}_{L^2}^2 + \absbig{z}^2 \normbig{(\nabla \chi)u}_{L^2}^2.
 \end{align*}
 This implies (\ref{eq: R vs R III}).

 \textbf{Step 2.} Estimation of $\normbig{\chi R(z)\Delta\chi}_{H^1_D\rightarrow L^2}$.
 \begin{align*}
  \normbig{\chi R(z)\Delta\chi}_{H^1_D\rightarrow L^2} &= \normbig{R_{\chi}(z)\Delta + \chi R(z) [\Delta, \chi]}_{H^1_D\rightarrow L^2} \\
  &\lesssim \normbig{R_{\chi}(z)}_{H^{-1}_D\rightarrow L^2} + \normbig{R_{\chi_1}(z)}_{L^2\rightarrow L^2} \\
  &\lesssim \frac{1}{\absbig{z}} + \absbig{z}\normbig{R_{\chi_1}(z)}_{L^2\rightarrow L^2}.
 \end{align*}
 From the second to the third line we used a duality argument (using (\ref{eq: duality for R})) together with (\ref{eq: R vs R III}). We have proved (\ref{eq: R vs R II}).
 
 \textbf{Step 3.} Estimation of $\normbig{zR_{\chi}(z)}_{H^1_D\rightarrow H^1_D}$. First we observe that by (\ref{eq: R vs R III})
 \begin{align*}
  \normbig{z^2 R_{\chi}(z)}_{H^1_D\rightarrow H^1_D} &= \normbig{1 + R_{\chi}(z)\Delta + \chi R(z)[\Delta,\chi]}_{H^1_D\rightarrow H^1_D} \\
  &\leq 1 + \normbig{R_{\chi}(z)\Delta}_{H^1_D\rightarrow H^1_D} + \normbig{R_{\chi_1}(z)}_{L^2\rightarrow H^1_D} \\
  &\lesssim 1 + \normbig{R_{\chi}(z)}_{H^{-1}_D\rightarrow H^1_D} + \absbig{z}\normbig{R_{\chi_1}(z)}_{L^2\rightarrow L^2} .
 \end{align*}
 It remains to estimate the middle term in the last line. Let $f\in H^{-1}_D(\Omega)$ and let $u\in H^1_D(\Omega)$ be the solution of (\ref{eq: aux equation R}) given by $R(z)\chi f$. Testing the equation with $\chi \overline{u}$ leads after a short calculation to
 \begin{align*}
  \normbig{\chi \nabla u}_{L^2}^2 \lesssim \normbig{\chi f}_{H^{-1}_D}^2 + \absbig{z}^2 \normbig{(\nabla \chi)u}_{L^2}^2.
 \end{align*}
 This implies together with a duality argument (using (\ref{eq: duality for R})) and (\ref{eq: R vs R III})
 \begin{align*}
  \normbig{R_{\chi}(z)}_{H^{-1}_D\rightarrow H^1_D} \lesssim 1 + \absbig{z} \normbig{R_{\chi_1}(z)}_{H^{-1}_D\rightarrow L^2} 
  \lesssim 1 + \absbig{z}^2 \normbig{R_{\tilde{\chi}}(z)}_{L^2\rightarrow L^2}.
 \end{align*}
 But now this in turn implies (\ref{eq: R vs R I}). The proof of Proposition \ref{prp:resolvent_A_vs_R} is finished.
\end{appendix}


\section*{Acknowledgements} I am most grateful to Ralph Chill, Yuri Tomilov and Sebastian Kr\'ol for valuable discussions on the topic of this article. I would like to thank the department of mathematics of the Nicolaus Copernicus University in Toru\'n for its hospitality. The idea to work on this topic came to me during a visit in december 2016. I am also grateful to Markus Hartlapp and Hannes Weisse for proofreading parts of the manuscript.


\hfill

Technische Universit\"{a}t Dresden, Fachrichtung Mathematik, Institut f\"{u}r Analysis, 01062, Dresden, Germany. Email: \textit{Reinhard.Stahn@tu-dresden.de}

\end{document}